\documentclass{amsart}
\usepackage{amsmath,amsthm}
\usepackage{amsfonts,amssymb}
\usepackage{enumerate}
\usepackage{accents,color}
\usepackage{graphicx}

\hfuzz1pc

\addtolength{\textwidth}{0.5cm}

\newcommand{\lvt}{\left|\kern-1.35pt\left|\kern-1.3pt\left|}
\newcommand{\rvt}{\right|\kern-1.3pt\right|\kern-1.35pt\right|}

\newtheorem{thm}{Theorem}[section]

\newtheorem{prop}[thm]{Proposition}
\newtheorem{exam}[thm]{Example}

\newtheorem{defn}[thm]{Definition}

\theoremstyle{remark}


 \def\sS{{\mathsf S}}
 \def\sT{{\mathsf T}}

 \def\a{{\alpha}}
 
 \def\g{{\gamma}}

 \def\s{\sigma}
 \def\la{{\langle}}
 \def\ra{{\rangle}}

 \def\CB{{\mathcal B}}
 \def\CD{{\mathcal D}}
 \def\CF{{\mathcal F}}
 \def\CH{{\mathcal H}}
 
 \def\CJ{{\mathcal J}}
 \def\CL{{\mathcal L}}
 
 \def\CP{{\mathcal P}}

 \def\CU{{\mathcal U}}
 \def\CV{{\mathcal V}}
 
 \def\CW{{\mathcal W}}
 \def\BB{{\mathbb B}}
 \def\CC{{\mathbb C}}
 
 \def\NN{{\mathbb N}}
 
 \def\QQ{{\mathbb Q}}
 \def\RR{{\mathbb R}}

\def\lla{\langle{\kern-2.5pt}\langle}
\def\rra{\rangle{\kern-2.5pt}\rangle}

\newcommand{\wt}{\widetilde}
\newcommand{\wh}{\widehat}
\def\rhow{{\lceil\tfrac{\varrho}{2}\rceil}}

\def\ball{\mathbb{B}^d}
\def\sph{\mathbb{S}^{d-1}}
\def\f{\frac}

\graphicspath{{./}}
\begin{document}

\title[Sobolev orthogonal polynomials]
{On Sobolev orthogonal polynomials}

\author{F. Marcell\'an}
\address{Instituto de Ciencias Matem\'aticas (ICMAT) and Departamento de Matem\'aticas, Universidad Carlos III de Madrid\\
Avenida de la Universidad 30, 28911 Legan\'es, Spain}\email{pacomarc@ing.uc3m.es}

\author{Yuan Xu}
\address{Department of Mathematics\\ University of Oregon\\
    Eugene, Oregon 97403-1222.}\email{yuan@uoregon.edu}

\date{\today}
\keywords{orthogonal polynomials, Sobolev orthogonal polynomials, approximation by polynomials}
\subjclass[2000]{33C45, 33C50, 41A10, 42C05, 42C10}
\thanks{The work of the first author has been supported by Direcci\'on General de Investigaci\'on Cient\'ifica y T\'ecnica, Ministerio de Econom\'ia y Competitividad of Spain, grant MTM2012-36732-C03-01.
The work of the second author was supported in part by NSF Grant DMS-1106113}

\begin{abstract}
Sobolev orthogonal polynomials have been studied extensively in the past 20 years. The research in this field has sprawled
into several directions and generates a plethora of publications. This paper contains a survey of the main developments up
to now. The goal is to identify main ideas and developments in the field, which hopefully will lend a structure
to the mountainous publications and help future research.
\end{abstract}

\maketitle
 \tableofcontents

\section{Introduction}
\setcounter{equation}{0}

If $\la \cdot,\cdot \ra$ is an inner product defined on the linear space of polynomials, then orthogonal polynomials
$\{p_n\}_{n \ge 0}$ with respect to the inner product are those polynomials satisfying $\la p_n, p_m \ra =0$
if $n \ne m$. We call them {\it ordinary} orthogonal polynomials if $\la f, g \ra = \int_\RR f(x) g(x) d\mu$ with
respect to a nonnegative Borel measure $d\mu$ supported on an infinite subset of the real line. They are 
called Sobolev orthogonal polynomials when the inner product involves derivatives.

Sobolev orthogonal polynomials were first consider in the early 60's of the last century. In the past 20 years, the
field has seen a rapid development that leads to a large amount of publications. A rough count shows around
four hundred publications in the past two decades. For a new comer, as the second author was at the
beginning of this project, the size of the literature is daunting and worse still is the disarray of the literature.

The theory of ordinary orthogonal polynomials is well established and documented. One essential tool in the
theory is the three-term recurrence relation that orthogonal polynomials satisfy, which holds if the multiplication operator
is symmetric with respect to the inner product, that is, if $\la x p, q \ra = \la p, x q\ra$. In the Sobolev setting, however, 
the multiplication operator is
no longer symmetric and, consequently, the three-term relation no longer holds. The deprival of this fundamental
tool cannot be easily compensated. Different and ad hoc methods have been developed for dealing with different
Sobolev inner products. The result is a theory of Sobolev orthogonal polynomials that appears fragmented and
lack of uniformity.

The purpose of this paper is to provide a survey for the current state of Sobolev orthogonal polynomials. The project
was initiated by the first author, who has worked extensively on the subject, led the second author, who works
in several variables where Sobolev orthogonal polynomials have only been studied recently, through the literature.
After intensive reading and discussion, we decided to trace the development of the subject, identify main ideas and
developments, and provide some structure to the literature so that it can be more accessible for new comers and
researchers interested in this field. The organization of the paper follows roughly the progress of this collective
learning process, the first part of which more or less correlates with the historical development of the field.

We will limit our scope to Sobolev orthogonal polynomials with respect to those inner products that are defined
by integrals with at most finite additional discrete mass points. The main Sobolev inner product that we consider can be written as
$$
    \la f, g \ra = \int_\RR f(x) g(x) d\mu_0 + \sum_{k=1}^m  \int_\RR f^{(k)}(x) g^{(k)}(x) d\mu_k,
$$
where $d\mu_k$, $k=0,1,\ldots, m$, are positive Borel measures on $\RR$. There are essentially three types
of such inner products in this paper:
\begin{enumerate}[\,I.]
\item  $d \mu_0, d\mu_1, \ldots, d\mu_m$ have continuous support.
\item  $d\mu_0$ has continuous support and $d\mu_1, \ldots, d\mu_m$ are supported on
finite subsets.
\item  $d\mu_m$ has continuous support and $d\mu_0,\ldots, d\mu_{m-1}$ are supported
on finite subsets.
\end{enumerate}
In the second and the third cases, we sometimes also consider mixed discrete part, for example,
in the second case,
\begin{align*}
  \la f, g\ra = \int_\RR f(x) g(x) d\mu_0
      +  \left(f(0), f'(0), \ldots, f^{(m)}(0)\right)M \left(g(0), g'(0), \ldots, g^{(m)}(0)\right)^T
\end{align*}
with $M$ being an $(m+1)\times (m+1)$ matrix. We will not consider Sobolev orthogonal polynomials that involve
discrete orthogonal polynomials, $q$-orthogonal polynomials, and complex valued orthogonal polynomials,
since the survey is already long and the ideas and methods used in these settings are often parallel to those
discussed in this paper.

The list of references at the end of the paper is compiled strictly according to the materials covered in the paper.
It is by no means inclusive or complete. We no doubt missed many papers that should be cited and we apologize
to those authors whose work we should have cited.

The paper is organized as follows. After a brief introduction of ordinary orthogonal polynomials in the second section,
we recall the history and early results of Sobolev orthogonal polynomials in the third section. The early results
were essentially established by the method of integration by parts, which is discussed in the fourth section. The
topic went into a long dormant a decade after its beginning and was awakened only when the notion of coherent
pair was introduced. The main idea of coherent pair and its various extensions are expounded in a lengthy fifth section.
Classical orthogonal polynomials, the Jacobi, Laguerre and Hermite polynomials, play an important role in the
development and they are Sobolev orthogonal polynomials themselves for appropriately defined inner products, which
will be explained in the sixth section. Sobolev orthogonal polynomials for the inner products of the second type, those with
derivatives appear only in the point evaluations, are discussed in the seventh section. Some of these polynomials
satisfy differential equations, which and other results concerning differential equations are presented in the eighth
section. Two important properties of Sobolev orthogonal polynomials, zeros and asymptotics, are addressed in the
ninth and tenth sections, respectively.  More recently, prompted partly by problems in numerical solution
of partial differential equations, Sobolev orthogonal polynomials in several variables have come into being. What is
known up to date in this direction is reported in the eleventh section. Finally, in the twelfth section, we
discuss Fourier expansions in Sobolev orthogonal polynomials. The lack of Christoffel-Darboux formula for Sobolev
orthogonal polynomials, consequence of the lack of three-term recurrence relation, deprives an important tool for studying
convergence and summability of Fourier orthogonal expansions. As a consequence, except for certain inner product
of the second type and some numerical experiments, the convergence of Fourier expansions in Sobolev orthogonal
polynomials has not been resolved. We consider this deficiency one of the major open problems that deserves
to be studied intensively. This call of action seems a fitting point to end our survey.

\section{Orthogonal polynomials}
\setcounter{equation}{0}

In this section, we introduce notation and basic background concerning the general structure of orthogonal polynomials.
Although the results in this section are mostly classical, it is necessary to fix notations and recall results
that will be essential in our discussion.

\subsection{General properties of orthogonal polynomials}
All functions encounter in this paper are real valued. Let $\Pi$ denote the linear space of polynomials with real coefficients 
on the real
line and, for $n = 0,1,\ldots$, let $\Pi_n$ denote the linear subspace of polynomials of degree at most $n$.

We consider orthogonal polynomials on the real line. Let $\la \cdot,\cdot \ra$ be a symmetric bilinear form defined on
$\Pi \times \Pi$. It is an inner product if $\la p, p \ra > 0$ for all nonzero polynomial $p \in \Pi$. A sequence
of polynomials $\{P_n \}_{n\geq0}$ is called orthogonal with respect to
$\la \cdot,\cdot\ra$ if $\deg P_n = n$ and
$$
      \la  P_n, P_m \ra  = 0, \qquad n \ne m.
$$
$P_n$ is said to be monic if $P_n (x) = x^n + a_{n,n-1} x^{n-1}+ \cdots$. For $n =0,1,2,\ldots,$ let
$$
M_n :=
\left[ \begin{matrix}
  \la 1,1\ra & \la 1,x\ra & \cdots &\la 1, x^n \ra \\
  \la  x,1\ra & \la x, x \ra & \cdots &\la x, x^n \ra \\
  \cdots & \cdots &  \ddots & \cdots \\
 \la  x^n,1\ra & \la x^n, x\ra & \cdots & \la x^n, x^n \ra
\end{matrix} \right].
$$
If $\la \cdot,\cdot \ra$ is an inner product, then $M_n$ is positive definite i.e. $\det M_n > 0$ for every $n \in \NN_0.$
If $\det M_n \ne 0$ for all $n \in \NN_0$, then a sequence of monic orthogonal polynomials exists. In fact, the monic
orthogonal polynomials are $P_0(x) =1$ and, for $n \ge 1$,
\begin{equation}\label{eq:detPn}
P_n (x) = \frac{1}{\det M_{n-1}} \det \left[ \begin{array}{c|c}
  M_{n-1} & \begin{matrix} \la 1, x^{n} \ra  \\ \la x, x^{n} \ra \\ \vdots \\ \la x^{n-1}, x^{n} \ra \end{matrix}\\
  \hline 1, x, \hdots, x^{n-1} & x^n
      \end{array} \right].
\end{equation}

Let $\mu$ be a positive Borel measure supported on the real line such that all its moments, $\int_\RR x^n d\mu$
with $n =0,1,2,\ldots$, are finite. For such a measure,
$$
  \la f, g \ra_{d\mu}: = \int_{\RR}  f(x) g(x) d\mu
$$
defines an inner product. We can define bilinear forms from linear functionals. Indeed, let $\CU: \Pi \mapsto \RR$
be a linear functional and denote its action on $p \in \Pi$ by $\la \CU, p\ra$.  Associated to $\CU$ we define a
bilinear form
$$
    \la f, g  \ra :=  \la \CU, f g \ra.
$$

\begin{defn}
The linear functional $\CU$ is called quasi-definite if $\det M_n \ne 0$ for all $n \in \NN_0$, and it is called
positive definite if $\det M_n > 0$ for all $n\in \NN_0$.
\end{defn}

When $\CU$ is quasi-definite, orthogonal polynomials with respect to the bilinear form defined by $\CU$ exist,
which shall be called orthogonal polynomials with respect to $\CU$. When $\CU$ is positive definite, the bilinear
form is an inner product given by $\la \cdot,\cdot \ra_{d\mu}$ for a positive Borel measure $\mu.$

Let $\partial$ denote the derivative $\partial f(x) := f'(x)$ and let $q$ be a fixed polynomial in $\Pi$.
For a linear functional $\CU$, the linear functionals $\partial \CU$ and $q \CU$ are defined, respectively, by
$$
  \la  \partial \CU, p \ra := - \la \CU, \partial p \ra \quad \hbox{and} \quad \la q\CU, p\ra: = \la \CU, q p\ra,
    \qquad \forall p \in \Pi.
$$
For $a \in \RR$, the delta functional $\delta_a$ is defined by $\la \delta_a, p \ra = p(a)$ for all $p \in \Pi$.

For a quasi-definite linear functional $\CU$, monic orthogonal polynomials $\{P_n\}_{n\geq0}$ are characterized by
the three-term recurrence relation
$$
   x P_n(x) = P_{n+1}(x) + b_n P_n(x) + c_n P_{n-1}(x), \qquad n\geq0
$$
where $c_n \ne 0$ for $n\geq 1$. The three-term recurrence relation plays an important role in the study of 
ordinary orthogonal polynomials. It is equivalent, in particular, to the Christoffel-Darboux formula,
$$
   \sum_{k=0}^n \frac{P_k(x)P_k(y)}{h_k} =  \frac{1}{h_n} \frac{P_{n+1}(x) P_n(y) - P_{n+1}(y) P_n(x)}{x-y},
$$
where $h_k = \sqrt{\la P_k,P_k \ra}$. For orthogonal polynomials with
respect to an inner product, the three-term relation holds if an only if $\la \cdot, \cdot \ra$ satisfies
$$
    \la x p, q\ra = \la p, x q\ra, \quad  p, q \in \Pi.
$$
i.e. the multiplication operator is a symmetric operator with respect to the above inner product. This property, however,
does not hold for Sobolev inner product in general as we will show in the sequel.

\subsection{Classical orthogonal polynomials} These polynomials are associated with the following weight functions
\begin{enumerate}[\quad \rm (1)]
\item Hermite: $w(x) = e^{-x^2}$ on $(-\infty, \infty)$;
\item Laguerre: $w_\a(x) = x^\a e^{-x}$ on $[0, \infty)$, $-\a \notin \NN$;
\item Jacobi: $w_{\a,\beta}(x) = (1-x)^\a (1+x)^\beta$ on $(-1, 1)$, $-\a \notin \NN, - \beta\notin \NN, -\a-\beta \notin \NN$.
\end{enumerate}
We denote the corresponding linear functional by $\CH$, $\CL_\a$, and $\CJ_{\a,\beta}$, respectively.
These linear functionals are quasi-definite for all ranges of their parameters. Moreover, $\CH$ is positive definite,
$\CL_\a$ is positive definite if $\a > -1$ and $\CJ_{\a,\beta}$ is positive definite if $\a > -1$ and $\beta > -1$.

In the positive definite case, the orthogonal polynomials for these weight functions are called classical. They are
the Hermite polynomials $H_n$, the Laguerre polynomials $L_n^{(\a)}$, and the Jacobi polynomials $P_n^{(\a,\beta)}$,
which are defined in terms of hypergeometric functions as follows,
\begin{align*}
  H_n(x)&  = (2 x)^n {}_{2}F_{0} \left( \begin{matrix} - \frac{n}{2}, - \frac{n+1}{2} \\
      -  \end{matrix};\frac{1}{x^2}\right),  \\
  L_n^{(\a)}(x) &  =\frac{(\alpha +1)_{n}}{n!} {}_{1}F_{1} \left( \begin{matrix} - n \\
        \a+1 \end{matrix};x \right),  \\
  P_{n}^{(\alpha ,\beta )}(x)  &=\frac{(\alpha +1)_{n}}{n!}{}_{2}F_{1}\left( \begin{matrix} -n,
n+\alpha +\beta +1\cr
\alpha +1\end{matrix};\frac{1-x}{2}\right).
\end{align*}
The Gegenbauer polynomial $C_n^\lambda$ is a constant multiple of the Jacobi polynomial
$$
C_n^\lambda (x)  =   \frac{(2 \lambda)_n}{ (\lambda+\f12)_n} P_n^{(\lambda-\f12, \lambda- \f12)}(x), \qquad \lambda > - \f 12.
$$

There are several characterizations of classical orthogonal polynomials. If we consider only orthogonal
polynomials with respect to a real inner product, then polynomials in each classical family are eigenfunctions of
a second order linear differential operator with polynomial coefficients; moreover, up to a real linear change of
variables,
they are the only orthogonal polynomials that satisfy this property. Let $\CU$ be one of the
classical linear functionals. Then $\CU$ satisfies the {\it Pearson equation}
$$
   \partial (\phi \CU) = \psi \CU, \qquad \deg \phi \le 2 \quad \hbox{and}\quad \deg \psi =1;
$$
more precisely, for $- \a \notin \NN$, $-\beta \notin \NN,$ and $-\a-\beta \notin \NN$,
$$
  \partial \CH = - 2 x \CH, \quad \partial( x \CL_{\a}) = (-x+\a+1) \CL_{\a}, \quad
     \partial ( (x^{2}-1) \CJ_{\a,\beta}) = ( (\a+\beta+2)x  + \a- \beta) \CJ_{\a,\beta}.
$$
Furthermore, up to a linear change of variables, the only other family of linear functionals that satisfy the
Pearson equation corresponds to the Bessel polynomials, and the linear functional is defined by an
integral along a curve of the complex plane surrounding the origin and complex weight
$$
w_{\alpha}(z)= \sum_{k=0}^{\infty} \frac{1}{(\alpha +1)_{k}} (-2/z)^{k},
$$
where $(a)_{n} = a (a+1) ...(a+n-1), n\geq 1,$ and $(a)_{0}=1$ denotes the Pochhammer symbol (see \cite{Chi78}).


\section{History and earlier results}
\setcounter{equation}{0}

The starting point of the Sobolev orthogonal polynomials can be traced back to the paper \cite{Lewis} by Lewis, who
asked the following question: Let $\a_0, \ldots, \a_p$ be monotonic, non-decreasing functions defined on $[a,b]$
and let $f$ be a function on $[a,b]$ that satisfies certain regularity conditions. Determine a polynomial $P_n$ of
degree $\le n$  that minimizes
$$
     \sum_{k=0}^p \int_a^b | f^{(k)} (x) - P_{n}^{(k)}(x) |^2 d\a_k(x).
$$
Lewis did not use Sobolev orthogonal polynomials and gave a formula for the reminder term of the approximation as
an integral of the Peano kernel. The first paper on Sobolev orthogonal polynomials was published by Althammer \cite{Alt},
who attributed his motivation to Lewis's paper.

The Sobolev orthogonal polynomials considered in \cite{Alt} are orthogonal with respect to the inner product
\begin{equation} \label{eq:ipdLeg1}
  \la f, g\ra_S = \int_{-1}^1 f(x)g(x) dx + \lambda \int_{-1}^1 f'(x) g'(x) dx, \quad \lambda > 0.
\end{equation}
These Sobolev-Legendre polynomials were systematically studied in \cite{Alt}. Some of the results in
\cite{Alt} were simplified and further extended by Sch\"afke in \cite{Sch72}. These earlier works already
demonstrated several characteristic features of Sobolev orthogonal polynomials. Let $S_n(\cdot;\lambda)$ denote
the orthogonal polynomial of degree $n$ with respect to the inner product $\la \cdot,\cdot \ra_S$, normalized
so that $S_n(1; \lambda) =1$, and let $P_n$ denote the $n$-th Legendre polynomial. The following properties hold for
$S_n(\cdot;\lambda)$:
\begin{enumerate}[\quad \rm (1)]
 \item $\{S_n(\cdot; \lambda)\}_{n \ge 0}$ satisfies a differential equation
  $$
       \lambda S_n''(x;\lambda) - S_n(x;\lambda) = A_n  P_{n+1}'(x) + B_n  P_{n-1}'(x),
  $$
where $A_n$ and $B_n$ are constants that can be given by explicit formulas.
\item $\{S_n(\cdot; \lambda)\}_{n \ge 0}$ satisfies a recursive relation
 $$
     S_n(x;\lambda) - S_{n-2}(x; \lambda) = a_n (P_n(x) - P_{n-2}(x)), \quad n =1,2,\ldots.
 $$
\item $S_n(\cdot;\lambda)$ has $n$ real simple zeros in $(-1,1)$,
\end{enumerate}
For a more detailed account on the development of these results,
we refer to the original articles or to \cite{Mei96}, which contains a nice survey of early history of Sobolev orthogonal
 polynomials.  The Sobolev-Legendre polynomials
were also studied by Gr\"obner, who established a version of the Rodrigues formula for in \cite{Gr67}, which states
that, up to a constant factor $c_n$,
$$
 S_n(x;\lambda) = c_n \frac{\partial^{n}}{1- \lambda \partial^{2}} \left((x^{2}-x)^{n} - \alpha_{n}( x^{2}- x)^{n-1}\right )
$$
where $\alpha_{n}$ are real numbers explicitly given in terms of $\lambda$ and $n$.
Furthermore, in \cite{Coh75}, Cohen proved that the zeros of $S_n(\cdot;\lambda)$ interlace with those of the Legendre
polynomial $P_{n-1}$ if $\lambda \ge 2/n$, and he also established the sign of the connecting coefficients
of $S_n(\cdot;\lambda)$ expanded in terms of the sequence $\{S_m(\cdot;\mu)\}_{m\ge 0}$ for $\lambda \ne \mu$.

In \cite{Alt}, Althammer also gave an example in which he replaced $dx$ in the second integral in $\la \cdot,\cdot\ra_S$
by $w(x) dx$ with $w(x) = 10$ for $-1 \le x \le 0$ and $w(x) = 1$ for $0 \le x \le 1$, and made the observation that
$S_2(x;\lambda)$ for this new inner product has one real zero outside of $(-1,1)$.

Another earlier paper is \cite{Br72}, in which Brenner considered the inner product
$$
  \la f, g\ra: = \int_0^\infty f(x) g(x) e^{-x} dx + \lambda \int_{0}^\infty f'(x) g'(x) e^{-x} dx, \quad \lambda > 0,
$$
and obtained results similar to those of Althammer.

An important paper in the early development of the Sobolev orthogonal polynomials is \cite{SW73}, in which
Sch\"afke and Wolf considered a family of inner products
\begin{equation} \label{eq:SW-ipd}
 \la f ,g \ra_S = \sum_{j,k = 0}^\infty \int_a^b f^{(j)}(x) g^{(k)}(x) v_{j,k}(x) w(x) dx,
\end{equation}
where $w$ and $(a,b)$ are one of the three classical cases, Hermite, Laguerre and Jacobi, and
the functions $v_{j,k}$ are polynomials that satisfy $v_{j,k} = v_{k,j}$, $j,k = 0,1,2,\ldots$, and permit writing
the inner product \eqref{eq:SW-ipd} as
$$
     \la f ,g \ra_S = \int_a^b f(x) \CB g(x) w(x) dx \quad \hbox{with}\quad
      \CB g: = w^{-1} \sum_{j,k=0}^\infty (-1)^j \partial^j (w v_{j,k} \partial^k )g
$$
through integration by parts. Under further restrictions on $v_{j,k}$, they narrowed down to eight classes of Sobolev orthogonal
polynomials, which they call simple generalization of classical orthogonal polynomials. These cases are given by

\begin{enumerate}[\qquad   ]
\item[A1.] $v_{j,k} =0,$ with $|j-k|>1,$ $v_{j,j} = (1-x^{2})^{j} (b_{j} x + c_{j}),$ $v_{j,j-1} = d_{j} (1-x^{2})^{j}.$

\item[A2.] $v_{j,k} =0,$ with $|j-k|>2,$ $v_{j,j} = (1-x^{2})^{j} (b_{j} (1- x^{2}) + c_{j}),$ $v_{j,j-1} = d_{j}x (1-x^{2})^{j}, j\geq 1,$ $v_{j,j-2} = e_{j} (1-x^{2})^{j}, j\geq 2.$

\item[A3.] $v_{j,k} =0,$ with $|j-k|>1,$ $v_{0,0}= c_{0},$ $v_{j,j} = (1+ x)^{j} (1- x)^{j-1}(b_{j} x + c_{j}),$ $v_{j,j-1} = d_{j} (1+ x)^{j} (1- x)^{j-1}, j\geq 1 .$

\item[A4.] $v_{j,k} =0,$ with $|j-k|>2$, $v_{0,0}= c_{0},$ $v_{j,j} = (1-x^{2})^{j-1} (b_{j} (1- x^{2}) + c_{j}),$ $v_{j,j-1} = d_{j}x (1-x^{2})^{j-1}, j\geq 1,$ $v_{j,j-2} = e_{j} (1-x^{2})^{j-1}, j\geq 2.$

\item[B1.] $v_{j,k} =0,$ with $|j-k |>1$, $v_{j,j} = x^{j} (b_{j} x + c_{j}),$ $v_{j,j-1} = d_{j} x^{j}.$

\item[B2.] $v_{j,k} =0,$ with $|j-k \, | >1$, $v_{0,0}= c_{0},$ $v_{j,j} = x^{j-1} (b_{j} x + c_{j}),$ $v_{j,j-1} = d_{j} x^{j-1}, j\geq 1.$

\item[B3.] $v_{j,k} =0,$ with $|j-k |>1$, $v_{j,j} = b_{j} x + c_{j},$ $v_{j,j-1} = d_{j}.$

\item[C1.] $v_{j,k} =0,$ with $|j-k|>2,$ $v_{j,j} = b_{j} x^{2} + c_{j},$ $v_{j,j-1} = d_{j}x, j\geq 1,$ $v_{j,j-2} = e_{j}.$
\end{enumerate}
The main results in \cite{SW73} extended all earlier results on Sobolev orthogonal polynomials. Let $\{S_n\}_{n\geq0}$ denote a
sequence of Sobolev orthogonal polynomials with respect to $\la \cdot,\cdot \ra_S$ in \eqref{eq:SW-ipd} and let $\{T_n\}_{n\geq0}$ denote a sequence of ordinary orthogonal polynomials with respect to the inner product
$$
   \la f, g\ra = \int_a^b f(x) g(x) u(x) w(x) dx,
$$
where $u$ is a polynomial of degree at most 2, which is known explicitly in each class. It was shown in \cite{SW73} that,
under appropriate normalizations of $S_n$ and $T_n$, there exists a sequence of real numbers  $\{a_n\}_{n\geq0}$ such that
$$
   S_{n+r} - S_n = a_{n+r} u T_{n+r - k},
$$
where $r = 1$ or $r$ depending on the case, $k$ equals the degree of $u$ and $a_{n+r}$ is a constant, and the
differential operator $\CB$ satisfies
$$
    \CB S_n = b_n (a_n T_{n + s } - a_{n+r} T_{n+s - r}),
$$
where $s = 0, 1$, or $2$, depending on the class. Furthermore, sufficient conditions were given in \cite{SW73}
for $S_n$ to have all simple zeros in $(a,b)$. The paper, however, is not easy to digest. Except the theorem on zeros,
results were not stated in theorems and it is no small task to figure out the exact statement for each individual class.

The primary tool in the early study of Sobolev orthogonal polynomials is integration by parts. Sch\"afke and Wolf
explored when this tool is applicable and outlined potential Sobolev inner products. It is remarkable
that their work appeared in such an early stage of the development of Sobolev orthogonal polynomials. However, instead
of stirred to action by \cite{SW73}, the study of Sobolev orthogonal polynomials unexpectedly became largely dormant
for nearly two decades, from which it reemerged only when a new ingredient,
{\it coherent pairs}, was introduce by Iserles, Koch, N{{\o}}rsett and Sanz-Serna in \cite{AKNS}.

\section{Method of integration by parts}
\setcounter{equation}{0}

In this section, we explain the method of integration by parts in the study of Sobolev orthogonal polynomials by
considering the inner product that involves only first order derivative,
\begin{equation} \label{eq:sec4-ipd}
  \la f, g\ra_S: =\la f, g \ra + \lambda \la f', g'\ra  \quad \hbox{with} \quad \la f,g\ra: = \int_a^b f(x) g(x) d\mu(x),
\end{equation}
where we assume that $d\mu = u(x) dx$ and $u$ satisfies the relation
\begin{equation} \label{eq:phi-psi}
\partial (\phi(x) u(x) ) = \psi(x) u(x),
\end{equation}
in which $\phi$ and $\psi$ are fixed polynomials, with $\phi$ monic and deg$\psi \geq 1,$ and we assume that $\phi$ or $u$ vanish on the end points of the interval $(a,b)$, under limit if $a = -\infty$ or $b = \infty$, so that integration by parts can be carried out.

In the case of the Laguerre weight function $w_\a(x) = x^\a e^{-x}$, $\phi(x) = x$ and $\psi(x) =\a +1- x$. In the case
of the Gegenbauer weight function $w_\lambda(x) = (1-x^2)^{\lambda-\f12}$, $\phi(x) = (1-x^2) $ and 
$\psi(x) = - (2\lambda+1)x$.

Associated with the inner product \eqref{eq:sec4-ipd}, the following differential operator is useful,
$$
     \CF : = \phi(x) I - \lambda \CF_0, \qquad \CF_0: =[u(x)]^{-1}  \phi(x) \partial \left[ u(x) \partial \right],
$$
where $I$ denotes the identity operator. Applying \eqref{eq:phi-psi} allows us to write $\CF_0$ as
$$
   \CF_0 = \phi(x)  \partial^2  + [\psi(x)- \phi'(x)] \partial .
$$

\begin{prop}
Assume that $u(x) \CF_0 $ is zero when $x=a$ and $x=b$. Then, for $f, g\in \Pi$,
\begin{enumerate}[\quad \rm (1)]
 \item $\la \phi f, g \ra_S = \la f, \CF g\ra$.
 \item $\CF$ is self-adjoint; that is, $\la \CF f, g \ra_S = \la f, \CF g\ra_S$.
\end{enumerate}
\end{prop}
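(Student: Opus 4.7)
For part (1), the natural route is to compute $\la f, \CF g \ra$ directly and transfer the derivatives onto $f$ via a single integration by parts. Writing
\[
\la f, \CF g \ra = \int_a^b f \phi g\, u\, dx - \lambda \int_a^b f\, \phi (u g')'\, dx,
\]
IBP on the second integral produces a boundary term $[\phi u\, f g']_a^b$, which vanishes by the endpoint hypothesis, and an interior piece $-\int_a^b (\phi f)' u g'\, dx = -\la(\phi f)', g'\ra$. The right-hand side therefore reassembles to $\la \phi f, g\ra + \lambda\la(\phi f)', g'\ra = \la \phi f, g\ra_S$.

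For part (2), I would apply part (1) twice---once as stated and once with $f$ and $g$ interchanged---together with the symmetry of $\la\cdot,\cdot\ra$ and $\la\cdot,\cdot\ra_S$ to obtain the companion identity $\la \CF f, g\ra = \la f, \phi g\ra_S$. Substituting both identities into $\la\cdot,\cdot\ra_S = \la\cdot,\cdot\ra + \lambda\la \partial\cdot, \partial\cdot\ra$ yields
\[
\la \CF f, g\ra_S - \la f, \CF g\ra_S = \lambda \bigl\{\la f', (\phi g)'\ra - \la(\phi f)', g'\ra\bigr\} + \lambda\bigl\{\la(\CF f)', g'\ra - \la f', (\CF g)'\ra\bigr\},
\]
which we wish to show vanishes. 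Expanding each $(\phi\,\cdot)'$ and $(\CF\,\cdot)'$ via the Leibniz rule, the terms proportional to $\phi'(fg'-f'g)u$ appear in the two brackets with opposite signs and cancel, collapsing the problem to the single identity $\la(\CF_0 f)', g'\ra = \la f', (\CF_0 g)'\ra$.

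The main work, and the main potential pitfall, is this last identity. Using the Pearson equation $(\phi u)' = \psi u$ to rewrite $\CF_0 f = \phi f'' + (\psi - \phi')f'$ and differentiating once, one finds $(\CF_0 f)' = \CM(f')$ where $\CM F := \phi F'' + \psi F' + (\psi'-\phi'')F$. A single IBP on the $\int\phi F''\,G\,u\,dx$ piece, once more using $(\phi u)' = \psi u$ to handle the derivative of the coefficient and the endpoint hypothesis to discard the boundary term, recasts $\int_a^b \CM F\cdot G\cdot u\,dx$ as $-\int_a^b\phi F'G'u\,dx + \int_a^b(\psi'-\phi'')FG\,u\,dx$, which is manifestly symmetric in $F$ and $G$. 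Setting $F = f'$ and $G = g'$ completes part (2). The hard part will be bookkeeping: several IBPs are chained in succession, each producing a boundary term of the form $[\phi u\cdot(\text{polynomial})]_a^b$, and one must verify that the endpoint hypothesis kills each such term and that the various $\phi'$-contributions cancel cleanly. The algebraic engine behind the cancellation is the Pearson relation, which is exactly what reduces the middle coefficient of $\CM$ to the single function $\psi$ and thereby makes $\CM$ self-adjoint against the weight $u$.
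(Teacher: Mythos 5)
Your argument is correct, and its overall architecture coincides with the paper's: part (1) is the same single integration by parts, and part (2) is in both cases reduced to the one identity $\la(\CF_0 f)',g'\ra=\la f',(\CF_0 g)'\ra$ (the paper leaves that reduction implicit with the phrase ``which can then be used to verify (2)''; your cancellation of the $\phi'(fg'-f'g)u$ terms supplies the omitted bookkeeping, and it checks out). Where you genuinely diverge is in the proof of the key identity itself. The paper keeps $\CF_0$ in divergence form, $u\,\CF_0 f=\phi\,(uf')'$, so that after one integration by parts $\la(\CF_0 f)',g'\ra=-\int_a^b \CF_0 f\,(g'u)'\,dx=-\int_a^b u^{-1}\phi\,(uf')'\,(ug')'\,dx$ is visibly symmetric in $f$ and $g$; the Pearson relation plays no role in this step. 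You instead expand $\CF_0$ into non-divergence form via Pearson, observe that $(\CF_0 f)'=\CM(f')$ with $\CM F=\phi F''+\psi F'+(\psi'-\phi'')F$, and check that $\CM$ is self-adjoint against the weight $u$ by one further integration by parts. Both computations are valid, and the boundary terms in each ($[\phi u fg']_a^b$ and $[\phi u F'G]_a^b$) are killed by the section's standing assumption that $\phi u$ vanishes at the endpoints. Your route is longer but has the merit of exhibiting explicitly the second-order self-adjoint (Sturm--Liouville type) operator acting on first derivatives, which is the structural reason the identity holds; the paper's one-line symmetrization of the divergence form is the more economical choice.
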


\begin{proof}
Integration by parts shows immediately that $\la (\phi f)', g' \ra = - \la \phi f, \CF_0 g\ra$, from which (1) follows readily.
Furthermore, it shows that
$$
   \la (\CF_0 f)' , g' \ra = - \int_a^b  \CF_0 f(x) (g'(x) u(x))' dx = - \int_a^b (f'(x) u(x))' \CF_0 g(x) dx = \la f', (\CF_0 g)' \ra,
$$
where the integration by parts is justified since $u (x)\CF_0$ is zero when $x = a$ or $x =b$, which can then be used to
verify (2).
\end{proof}

Let again $\{S_n(\cdot; \lambda)\}_{n\geq0}$ be the sequence of monic Sobolev polynomials and let $\{P_n\}_{n\geq0}$ be the sequence of monic polynomials orthogonal with respect to $\la \cdot, \cdot \ra.$

\begin{prop} \label{prop:LaguerreSOP}
Assume that $\phi$ is of degree $s$ and $\psi$ is of degree at most $s-1$. Then,
for $n \ge s$,
\begin{enumerate}[\quad \rm (i)]
 \item $\phi(x) P_n(x, d\mu) = S_{n+s} (x;\lambda) +\displaystyle{ \sum_{j = n-s}^{n+s-1} a_{j,n}(\lambda) S_{n+j}(x;\lambda)}$,
 \item $\CF S_n(x;\lambda) =  S_{n+s} (x;\lambda) + \displaystyle{ \sum_{j = n-s}^{n+s-1} b_{j,n}(\lambda) S_{n+j}(x;\lambda)}$.
\end{enumerate}
\end{prop}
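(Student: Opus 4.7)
The plan is to expand both $\phi P_n$ and $\CF S_n(\cdot;\lambda)$ in the Sobolev-orthogonal basis $\{S_k(\cdot;\lambda)\}_{k\ge 0}$ and use Proposition~4.1 to force the low-index coefficients to vanish. Two preliminary facts pin down the top of each expansion. First, since $\phi$ is monic of degree $s$ and $P_n$ is monic of degree $n$, the product $\phi P_n$ is monic of degree $n+s$. Second, for $\CF S_n = \phi S_n - \lambda \CF_0 S_n$ with $\CF_0 = \phi \partial^2 + (\psi-\phi')\partial$, the degree hypotheses $\deg \phi = s$ and $\deg \psi \le s-1$ give $\deg(\CF_0 S_n) \le n+s-2$, so $\CF S_n$ is again monic of degree $n+s$. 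Consequently each side admits an expansion
\[
  \phi P_n = S_{n+s}(\cdot;\lambda) + \sum_{k=0}^{n+s-1} c_{k,n}(\lambda) S_k(\cdot;\lambda),
  \qquad
  \CF S_n = S_{n+s}(\cdot;\lambda) + \sum_{k=0}^{n+s-1} d_{k,n}(\lambda) S_k(\cdot;\lambda),
\]
and the whole proposition reduces to showing $c_{k,n}(\lambda) = d_{k,n}(\lambda) = 0$ for $k < n-s$.

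To eliminate the low-index coefficients in (i), I would compute the Sobolev Fourier coefficient $c_{k,n}(\lambda) = \la \phi P_n, S_k\ra_S / \la S_k, S_k\ra_S$ and apply part~(1) of Proposition~4.1, which converts the numerator into the ordinary-inner-product pairing $\la P_n, \CF S_k\ra$. Since $\CF S_k$ has degree at most $k+s$, the hypothesis $k < n-s$ places $\CF S_k$ in $\Pi_{n-1}$, and ordinary orthogonality of $P_n$ with respect to $\la \cdot,\cdot\ra$ kills the pairing. This proves (i). For (ii) the same idea is used with the self-adjointness of $\CF$ in the Sobolev inner product: part~(2) of Proposition~4.1 yields $\la \CF S_n, S_k\ra_S = \la S_n, \CF S_k\ra_S$, and once again $\CF S_k \in \Pi_{n-1}$ for $k < n-s$, so Sobolev orthogonality of $S_n$ forces the coefficient to vanish.

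There is no real obstacle here; the argument is essentially a two-line degree count wrapped around Proposition~4.1. The only point that needs care is the monicity of $\CF S_n$ at degree $n+s$, which is what fixes the leading coefficient of the expansion to be $1$ and requires the hypothesis $\deg \psi \le s-1$ (otherwise the $(\psi-\phi')\partial$ term could boost $\CF_0 S_n$ to degree $n+s$ and spoil the normalization). I would also implicitly rely on the boundary hypothesis that $u(x)\CF_0$ vanishes at $x=a,b$, which justifies the integration by parts behind Proposition~4.1, and which is already part of the standing assumptions in this section.
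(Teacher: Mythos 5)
Your argument is correct and is essentially the paper's own proof: both rest on the degree count $\CF:\Pi_k\to\Pi_{k+s}$ together with Proposition~4.1, part (1) to convert $\la \phi P_n, S_k\ra_S$ into $\la P_n,\CF S_k\ra$ for (i) and part (2) (self-adjointness) for (ii), forcing all Sobolev--Fourier coefficients with index below $n-s$ to vanish. You merely make explicit two points the paper leaves tacit, namely the monicity of $\phi P_n$ and of $\CF S_n$ at degree $n+s$ and the role of the hypothesis $\deg\psi\le s-1$ in the latter.
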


\begin{proof}
The additional assumptions on $\phi$ and $\psi$ show that $\CF_0: \Pi_n \mapsto \Pi_{n+s-2}$, so that
$\CF: \Pi_n \to \Pi_{n+s}$. By (1) of the previous proposition, $\la \phi P_n(\cdot; d\mu), g \ra_S = \la P_n(\cdot; d\mu),
\CF g\ra = 0$ if $\CF g \in \Pi_{n-s}$, which implies (i). The same argument proves (ii) as well.
\end{proof}

The relation in (i) is the recursive relation and (ii) is the difference-differential equation satisfied by the Sobolev
orthogonal polynomials. The constants $a_{j,n}$ and $b_{j,n}$ can be explicitly determined in the case of
the classical Laguerre weight $w_\a$ and the Gegenbauer weight $w_\lambda$.

Take the Laguerre weight $w_\a$ as an example. The monic Laguerre polynomial of degree $n$ is
$P_n(x) := n!(-1)^n L_n^{(\a)}(x)$. By the property of the Laguerre polynomial and its derivative, we have
\begin{equation}\label{eq:Laguerre-coh}
    Q_n'(x) = n P_{n-1}(x), \qquad \hbox{where} \quad Q_n (x):= P_n (x) + n P_{n-1}(x).
\end{equation}
It follows readily that $\la Q_n, q\ra_S = 0$ if $q \in \Pi_{n-2}$, which implies immediately that
\begin{equation} \label{eq:Laguerre-SOP}
     Q_n(x) = S_n(x; \lambda) + d_{n-1}(\lambda) S_{n-1}(x;\lambda)
\end{equation}
for some constant $d_{n-1}(\lambda)$. Both the differential relation and the recursive relation in
Proposition \ref{prop:LaguerreSOP} can be made explicit using the constant $d_n(\lambda)$. For
example, using the three-term relation for the monic Laguerre polynomial and
\eqref{eq:Laguerre-SOP}, the recurrence relation (i) becomes
$$
    x P_n(x) =  S_{n+1}(x;\lambda) +  [n+\a +  d_n(\lambda)]S_n(x;\lambda) + [(n+\a)d_{n-1}(\lambda)] S_{n-1}(\lambda;x).
$$
Finally, the value of $d_{n-1}(\lambda)$ can be deduced recursively.

\begin{prop}
For $n =2, 3, \ldots,$
$$
  d_n(\lambda) =  \frac{(n+1)(n+\a)}{ (\lambda+2)n + \a - d_{n-1}(\lambda)} \quad \hbox{with}\quad
      d_1(\lambda) := \frac{2(\a+1)}{ \lambda+\a+1}.
$$
\end{prop}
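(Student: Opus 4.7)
The plan is to extract $d_n(\lambda)$ by taking the Sobolev inner product of the identity \eqref{eq:Laguerre-SOP} (with $n$ replaced by $n+1$) against $S_n(\cdot;\lambda)$. Writing $d_n=d_n(\lambda)$ and $h_n:=\la P_n,P_n\ra$, this gives
\[
d_n\,\la S_n,S_n\ra_S \;=\; \la Q_{n+1},S_n\ra_S,
\]
by the $\la\cdot,\cdot\ra_S$-orthogonality of $S_{n+1}$ against $S_n$. The whole computation then reduces to evaluating the two sides in closed form using only orthogonality and the derivative relation \eqref{eq:Laguerre-coh}.

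For the right-hand side, I will split $\la Q_{n+1},S_n\ra_S=\la Q_{n+1},S_n\ra+\lambda\la Q_{n+1}',S_n'\ra$. Since $Q_{n+1}'=(n+1)P_n$ by \eqref{eq:Laguerre-coh} and $\deg S_n'\le n-1$, the derivative term vanishes by orthogonality of $P_n$ with respect to $\la\cdot,\cdot\ra$. The undifferentiated term collapses as $\la Q_{n+1},S_n\ra=\la P_{n+1}+(n+1)P_n,S_n\ra=(n+1)h_n$, because $S_n$ is monic of degree $n$ and $P_{n+1}$ is orthogonal to $\Pi_n$. Hence $\la Q_{n+1},S_n\ra_S=(n+1)h_n$.

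For the norm $\la S_n,S_n\ra_S$, I will use \eqref{eq:Laguerre-SOP} and orthogonality to rewrite it as $\la S_n,Q_n\ra_S$, and then split into a measure term and a derivative term as before. The derivative term is $\lambda\la S_n',nP_{n-1}\ra=\lambda n^2 h_{n-1}$ because the leading coefficient of $S_n'$ is $n$. The measure term is $\la S_n,P_n\ra+n\la S_n,P_{n-1}\ra$; the first is $h_n$ (since $S_n=P_n+$ lower), and the second requires substituting \eqref{eq:Laguerre-SOP} once more, $\la S_n,P_{n-1}\ra=\la Q_n-d_{n-1}S_{n-1},P_{n-1}\ra=(n-d_{n-1})h_{n-1}$. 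Assembling these pieces yields
\[
\la S_n,S_n\ra_S=h_n+\bigl[n^2(1+\lambda)-n\,d_{n-1}\bigr]h_{n-1}.
\]

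Combining the two evaluations and dividing numerator and denominator by $h_{n-1}$, the recursion follows from the elementary identity $h_n/h_{n-1}=n(n+\a)$ for monic Laguerre polynomials, which converts $(n+1)h_n/\bigl(h_n+[n^2(1+\lambda)-n d_{n-1}]h_{n-1}\bigr)$ into the stated closed form $(n+1)(n+\a)/((\lambda+2)n+\a-d_{n-1})$. Finally, the initial value $d_1(\lambda)$ is obtained by an explicit base case computation: $S_1(x)=x-\a-1=P_1(x)$ gives $\la S_1,S_1\ra_S=(\a+1+\lambda)\Gamma(\a+1)$, while $\la Q_2,S_1\ra_S=2h_1=2(\a+1)\Gamma(\a+1)$ by the same splitting argument, producing $d_1(\lambda)=2(\a+1)/(\lambda+\a+1)$. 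The only delicate point is bookkeeping the repeated uses of orthogonality against $P_n$, $P_{n-1}$ and $S_{n-1}$; there is no analytic obstacle beyond this.
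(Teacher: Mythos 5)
Your proposal is correct and follows essentially the same route as the paper: both extract $d_n(\lambda)$ from the orthogonality $\la S_{n+1},S_n\ra_S=0$ combined with the connection formula \eqref{eq:Laguerre-SOP}, and then evaluate the resulting inner products using \eqref{eq:Laguerre-coh} and the ratio $h_n/h_{n-1}=n(n+\a)$ for the monic Laguerre norms. The only cosmetic difference is that the paper organizes the computation around the inner products $\la Q_j,Q_n\ra_S$, whereas you pair $Q_{n+1}$ against $S_n$ and compute $\|S_n\|_S^2$ directly; your bookkeeping of each term checks out.
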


\begin{proof}
Using the relation \eqref{eq:Laguerre-SOP} for both $S_{n+1}(\cdot;\lambda)$ and $S_n(\cdot;\lambda)$, it follows that
\begin{align*}
 0 = \la S_{n+1}, S_{n} \ra_S = \la Q_{n+1}, Q_{n} \ra_S - d_n(\lambda) \la Q_{n}, Q_{n} \ra_S
   + d_n(\lambda) d_{n-1}(\lambda) \la Q_{n-1}, Q_{n} \ra_S.
\end{align*}
Evaluating the inner products in the right hand side by using \eqref{eq:Laguerre-coh}, this gives the
stated recursive relation.
\end{proof}

The recurrence relation shows that $d_n(\lambda)$ is a rational function of $\lambda$. In particular, if we express
it as $d_n(\lambda) = (n+1)(n+\lambda) q_{n-1}(\lambda) /q_n(\lambda)$, then $q_n(\lambda)$ satisfies a three-term recurrence relation and $q_n(\lambda)$
is related to the Pollaczek polynomials.

The monic Gegenbauer polynomial $P_n^\lambda(x)$ satisfies the relation
$$
   P_n^\lambda(x) =  \frac{1}{n+1} \frac{d}{dx}P_{n+1}^\lambda(x) - \frac{\lambda-1}{4 (n+\lambda-1)(n+\lambda)}\frac{d}{d x} P_{n-1}^\lambda (x).
$$
Using the parity of the Gegenbauer polynomials, the above procedure for the Laguerre weight can
be carried for the Sobolev orthogonal polynomials associated with the Gegenbauer weight $w_\lambda$.

The above analysis shows how the classical work of Althammer for the Legendre weight and Brenner
for $e^{-x}$, the Laguerre weight with $\a =0$, can be worked out. This streamlined analysis was
carried out in \cite{MPP95, MPP96}, which already incorporated the idea of the coherent pair that
will be addressed in the next section. The method of integration by parts is applicable to Sobolev inner products involving derivatives
of higher order, which gives an indication how the general setting of Sch\"afke and Wolf in  \cite{SW73}
can be developed. It is no longer clear, however, if the finer result that hold for the Laguerre and the
Gegenbauer weights can be established in the general setting of \cite{SW73}.

\section{Coherent pairs}
\setcounter{equation}{0}

The notion of coherent pair was first introduced in \cite{AKNS} and it has become an important tool and a source of
new development. Its appearance coincides with the revival of the field of Sobolev orthogonal polynomials,
which has flourished ever since.  This section explains this notion and its various generalizations.

\subsection{Coherent pairs}
The coherent pair introduced in \cite{AKNS} is defined for the inner product
\begin{equation} \label{eq:cohernt-ipd}
  \la f, g \ra_\lambda = \int_a^b f(x) g(x) d \mu_0(x) + \lambda \int_a^b f'(x) g'(x) d\mu_1(x),
\end{equation}
where $- \infty \le a < b \le \infty$, $\mu_0$ and $\mu_1$ are positive Borel measures on the real
line with finite moments of all orders. Let $P_n(\cdot; d\mu_i)$ denote the monic orthogonal polynomial of
degree $n$ with respect to $d\mu_i$.

\begin{defn}
The pair $\{d\mu_0, d\mu_1\}$ is called coherent if there exists a sequence of nonzero
real numbers $\{a_{n}\}_{n\geq1}$ such that
\begin{equation} \label{eq:coh-A1}
    P_n(\cdot; d\mu_1) = \frac{P_{n+1}'(\cdot;d\mu_0)}{n+1} + a_n \frac{P_n'(\cdot;d\mu_0)}{n}, \qquad n \ge 1.
\end{equation}
If $[a,b]= [-c,c]$ and $d\mu_0$ and $d\mu_1$ are both even, then $\{d\mu_0, d\mu_1\}$
is called a symmetrically coherent pair if
\begin{equation} \label{eq:coh-A2}
   P_n(\cdot;d\mu_1) = \frac{ P_{n+1}'(\cdot;d\mu_0)}{n+1} + a_n \frac{P_{n-1}'(\cdot;d\mu_0)}{n-1}, \quad n \ge 2.
\end{equation}
In the case of $d\mu_1 = d\mu_0$, we call $d \mu_0$ self-coherent.
\end{defn}

For the classical orthogonal polynomials, it is easy to see that the following examples are
coherent pairs:

\begin{exam} The Laguerre measure $d\mu = x^\a e^{-x} dx$, $\a > -1$, is an example of
a self-coherent pair. The Gegenbauer measure $d\mu = (1-x^2)^{\mu-1/2}$, $\lambda > 0$, is an example of
a symmetrically self-coherent pair. The Jacobi weight yields a natural coherent pair; for $\a, \beta > -1$
$$
  d\mu_0 = (1-x)^\a(1+x)^\beta \quad \hbox{and} \quad  d\mu_1 = (1-x)^{1+\a}(1+x)^{1+  \beta}.
$$
\end{exam}

Let $\{S_n(\cdot;\lambda)\}_{n\geq 0}$ denote the sequence of monic Sobolev orthogonal polynomials with respect to
$\la \cdot, \cdot \ra_\lambda$. Then $S_n(\cdot;\lambda)$ is given
by the determinant expression \eqref{eq:detPn}. It is easy to see that
$$
      \Lambda_n(x) : = \lim_{\lambda \to \infty} S_n(x;\lambda)
$$
defines a monic polynomial $\Lambda_n(x)$ of degree $n$ which satisfies the following properties
\begin{equation} \label{eq:limSn}
\Lambda_n'(x) = n P_{n-1}(x;d\mu_1) \quad \hbox{and}\quad
\int_\RR \Lambda_n(x) d\mu_0 = 0, \quad n \ge 1.
\end{equation}

\begin{thm} \label{thm:SOP-OP-A1}
If $\{d\mu_0, d\mu_1\}$ is a coherent pair, then
\begin{equation} \label{eq:SOP-OP-A1}
    S_n(x;\lambda) + b_{n-1}(\lambda) S_{n-1}(x;\lambda)=  P_n(x;d\mu_0)  + \wh a_{n-1}  P_{n-1}(x;d\mu_0),
\end{equation}
where $\wh a_{n-1} = n a_n /(n-1)$ and  $b_{n-1}(\lambda) =\wh a_{n-1} \|P_{n-1}(\cdot; d\mu_0)\|_{d\mu_0}^2 /{\|S_{n-1}(\cdot;\lambda)\|_\lambda^2}$.
\end{thm}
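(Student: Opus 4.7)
The strategy is to build, directly from the coherent pair relation, a specific monic polynomial $Q_n$ of degree $n$ that is Sobolev-orthogonal to $\Pi_{n-2}$. By uniqueness of expansion in the Sobolev orthogonal basis, such a $Q_n$ must have the two-term form $S_n(\cdot;\lambda) + b_{n-1}(\lambda) S_{n-1}(\cdot;\lambda)$, and a single inner-product computation then pins down $b_{n-1}(\lambda)$.

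Concretely, define
\[
Q_n(x) := P_n(x;d\mu_0) + \wh a_{n-1}\,P_{n-1}(x;d\mu_0),
\]
a monic polynomial of degree $n$. The scalar $\wh a_{n-1}$ should be chosen so that the derivative $Q_n'$ collapses onto a single $d\mu_1$-orthogonal polynomial. Applying the coherent pair relation at index $n-1$,
\[
P_{n-1}(x;d\mu_1) = \frac{P_n'(x;d\mu_0)}{n} + a_{n-1}\frac{P_{n-1}'(x;d\mu_0)}{n-1},
\]
and multiplying through by $n$, one reads off that the choice $\wh a_{n-1} = n a_{n-1}/(n-1)$ (the constant displayed in the theorem) yields $Q_n' = n\,P_{n-1}(\cdot;d\mu_1)$.

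With this choice in hand, for every $q \in \Pi_{n-2}$ the Sobolev inner product splits as
\[
\la Q_n, q\ra_\lambda = \int_a^b Q_n\,q\,d\mu_0 + \lambda \int_a^b Q_n'\,q'\,d\mu_1 .
\]
The first integral vanishes because $P_n(\cdot;d\mu_0)$ and $P_{n-1}(\cdot;d\mu_0)$ are $d\mu_0$-orthogonal to $\Pi_{n-2}$, and the second vanishes because $Q_n' = n P_{n-1}(\cdot;d\mu_1)$ is $d\mu_1$-orthogonal to $\Pi_{n-2}$, which contains $q'\in\Pi_{n-3}$. Writing $Q_n = \sum_{k=0}^n c_k S_k(\cdot;\lambda)$, monicity forces $c_n = 1$, and the orthogonality just established forces $c_0=\cdots=c_{n-2}=0$. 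Hence $Q_n = S_n(\cdot;\lambda) + b_{n-1}(\lambda)S_{n-1}(\cdot;\lambda)$ with $b_{n-1}(\lambda):=c_{n-1}$, which is exactly the claimed identity.

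To determine $b_{n-1}(\lambda)$, I would take $\la \cdot, S_{n-1}(\cdot;\lambda)\ra_\lambda$ of both sides. The derivative term $\lambda\int Q_n'\,S_{n-1}'\,d\mu_1$ vanishes by the same orthogonality argument, since $S_{n-1}'\in\Pi_{n-2}$. The remaining term $\int Q_n\,S_{n-1}\,d\mu_0$ reduces, after expanding $S_{n-1}(\cdot;\lambda)$ as a monic polynomial of degree $n-1$ and using $d\mu_0$-orthogonality of $P_n$ and $P_{n-1}$ against lower-degree polynomials, to $\wh a_{n-1}\|P_{n-1}(\cdot;d\mu_0)\|_{d\mu_0}^2$. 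Dividing by $\|S_{n-1}(\cdot;\lambda)\|_\lambda^2$ gives the stated formula. The only genuinely delicate step is the algebraic matching in the second paragraph: recognizing, by a single index shift in the coherent pair relation, that the particular combination $P_n + \wh a_{n-1}P_{n-1}$ has its derivative equal to a single classical polynomial with respect to $d\mu_1$. Once that identification is made, the Sobolev orthogonality of $Q_n$ and the determination of $b_{n-1}(\lambda)$ are routine.
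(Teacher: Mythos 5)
Your proof is correct and follows essentially the same route as the paper: the paper packages the combination $P_n(\cdot;d\mu_0)+\wh a_{n-1}P_{n-1}(\cdot;d\mu_0)$ as the limit polynomial $\Lambda_n=\lim_{\lambda\to\infty}S_n(\cdot;\lambda)$ with $\Lambda_n'=nP_{n-1}(\cdot;d\mu_1)$, but then argues exactly as you do --- Sobolev-orthogonality to $\Pi_{n-2}$, expansion in the $S_k(\cdot;\lambda)$ basis, and evaluation of $b_{n-1}(\lambda)$ via the inner product with $S_{n-1}(\cdot;\lambda)$. Your reading $\wh a_{n-1}=na_{n-1}/(n-1)$ is the right one (consistent with the paper's later statement $\wh a_n=(n+1)a_n/n$); the ``$na_n/(n-1)$'' in the theorem's statement is evidently a typo.
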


\begin{proof}
By \eqref{eq:coh-A1} and \eqref{eq:limSn}, we see that
\begin{equation} \label{eq:Ldn-Pn}
   \Lambda_n(x) = P_n(x;d\mu_0) + \wh a_{n-1} P_{n-1}(x;d\mu_0).
\end{equation}
For $0 \le j \le n-2$, it follows from \eqref{eq:limSn} that
$$
\la \Lambda_n, S_j(\cdot;\lambda) \ra_\lambda = \la \Lambda_n, S_j(\cdot;\lambda) \ra_{d\mu_0} +
    n \la P_{n-1}(\cdot;d\mu_1)  S'_j(\cdot;\lambda) \ra_{d\mu_1} =0,
$$
which implies, considering the expansion of $\Lambda_n$ in $S_j(\cdot;\lambda)$, that
$$
 \Lambda_n(x) = S_n(x;\lambda) + b_{n-1}(\lambda) S_{n-1}(x;\lambda), \quad \hbox{where}\quad b_{n-1}(\lambda)
      = \frac{\la \Lambda_n, S_{n-1}(\cdot;\lambda)\ra_\lambda}{\|S_{n-1}(\cdot;\lambda)\|_\lambda^2}.
$$
The formula of $b_{n-1}(\lambda)$ follows from $\la \Lambda_n', S_{n-1}'(\cdot;\lambda)\ra_{d\mu_1} =0$ as well as from the fact
that both $P_{n-1}(\cdot; d\mu_0)$ and $S_{n-1}(\cdot;\lambda)$ are monic.
\end{proof}

Since $b_{n-1}(\lambda)$ depends only on $S_{n-1}(\cdot;\lambda)$, the identity  \eqref{eq:SOP-OP-A1} can
be used to compute $S_n(\cdot; \lambda)$ inductively. Furthermore, it implies the expansion
$$
 S_n(x;\lambda) = P_n(x;d\mu_0) + \sum_{k=0}^{n-1} \Big(\prod_{j=k+1}^{n-1} b_j(\lambda) \Big) (\wh a_k- b_k(\lambda))  P_k(x;d\mu_0),
$$
where we adopt the convention that $\prod_{j=n}^{n-1} b_j(\lambda) =1$.

In \cite{AKNS}, a different normalization of Sobolev orthogonal polynomials is chosen. Let $P_n = P_n(\cdot; d\mu_0)$. Define
$$
  \wt S_n(x;\lambda): =\frac{1}{ \displaystyle{\prod_{k=1}^{n-1} }\|P_k\|^2_{d\mu_0} }
  \det \left[ \begin{matrix} \la P_1, P_1\ra_\lambda & \la P_1, P_2\ra_\lambda & \ldots &  \la P_1, P_n\ra_\lambda \\
   \la P_2, P_1\ra_\lambda & \la P_2, P_2\ra_\lambda & \ldots &  \la P_2, P_n\ra_\lambda \\
   \ldots & \ldots & \ddots & \ldots \\
   \la P_{n-1}, P_1\ra_\lambda & \la P_{n-1}, P_2\ra_\lambda & \ldots &  \la P_{n-1}, P_n\ra_\lambda \\
    P_1(x) & P_2(x) & \ldots & P_n(x)
  \end{matrix} \right].
$$
With this normalization, the identity \eqref{eq:SOP-OP-A1} becomes
$$
  \wt S_{n}(x;\lambda) - \wt S_{n-1}(x;\lambda) = a_n(\lambda) (P_{n}(x; d\mu_0)- P_{n-1}(x; d\mu_0))
$$
and the coefficients $\a_k(\lambda)$ in the expansion
$$
  \wt S_n(x;\lambda) = \sum_{k=1}^{n-1} \a_{k}(\lambda) P_k(x; d\mu_0) + a_n(\lambda)  P_{n}(x; d\mu_0)
$$
depend only on $k$. Moreover, $\a_k(\lambda)$ is a polynomial of degree $k$ in the variable $\lambda$ such that $\a_k(0)=0$.
Furthermore, let $R_k(\lambda):= \a_{k+1}(\lambda)/\lambda$; then the sequence of polynomials $\{R_k \}_{k\geq0}$, for which $\deg R_{k}=k$,
satisfies a three-term relation and, as a consequence, is a sequence of
orthogonal polynomials with respect to a positive
Borel measure. In the case of the Laguerre weight $d\mu_0 (x)  = d \mu_1(x) = x^\a e^{-x} dx$ on
$[0, \infty)$, the $R_k(\lambda)$ are, up to a multiple constant, Pollaczek polynomials.

In \cite{Mei93}, it was observed that the zeros of $\Lambda_n$ interlace with those of $P_{n-1}(\cdot;d\mu_0)$ and
with those of $P_{n}(\cdot;d\mu_0)$. Consequently, if $\lambda$ is large enough, then $S_n(\cdot;\lambda)$ has $n$ simple,
real zeros that interlace with the zeros of $P_{n-1}(\cdot; d\mu_0)$ and with those of $P_{n-1}(\cdot; d\mu_1)$.

Using \eqref{eq:SOP-OP-A1} and the three-term relation for $P_n(\cdot; d\mu_0)$, it is possible to derive a
recurrence relation for $\{S_n(\cdot;\lambda)\}$, as observed in \cite{BrMe95}. When $\{\Lambda_n \}_{n\geq0}$ is a sequence
of orthogonal polynomials, which holds, by \eqref{eq:limSn}, if $d \mu_1$ is classical, the recurrence relation is
just the three-term recurrence relation of $\{\Lambda_n\}_{n\ge 0}$ written in $S_n(\cdot;\lambda)$ by \eqref{eq:Ldn-Pn}.

\subsection{Determination of coherent pairs}
Because of their applications in Sobolev orthogonal polynomials, an immediate question is to identify all coherent pairs. For this purpose,
the more general notion of orthogonality in terms of linear functionals is often considered. The notion of coherent
pair can be extended to the linear functionals $\{\CU_0, \CU_1\}$, if the relation \eqref{eq:coh-A1} holds with
$P_n(\cdot;d\mu_i)$ replaced by $P_n(\cdot; \CU_i)$. For classical orthogonal polynomials, we allow the
parameters in $\CL^{(\a)}$ and $\CJ^{(\a,\beta)}$ to be negative real numbers but not negative integers.

The first step of identifying all coherent pairs was taken in \cite{MP95}, where the coherent pairs were identified
when either $\CU_0$ or $\CU_1$ is classical in the extended sense. Since the derivative of a classical
orthogonal polynomial is again a classical orthogonal polynomial (with different parameter),  \eqref{eq:coh-A1}
reduces to
$$
  Q_n = P_n + c_n P_{n-1}, \qquad n \ge 1,
$$
where either $\{Q_n\}_{n\geq0}$ or $\{P_n\}_{\geq0}$ is a sequence of classical orthogonal polynomials. Comparing the
coefficients of the three-term relations satisfied by $\{Q_n\}_{n\geq0}$ and $\{P_n\}_{n\geq0}$ in the above identity,
all coherent pairs when one of the measures is classical in the extended sense were found in \cite{MP95}.

The next important step is \cite{MPP95}, which shows that if $\{\CU_0, \CU_1\}$ is a coherent pair, then
\begin{equation} \label{eq:coh-PQ}
   n \frac{P_n(x;\CU_0)} {\|P_n(\cdot; \CU_0)\|_{\CU_0}^2} \CU_0 = \partial (Q_n \CU_1), \quad n \ge 1,
\end{equation}
where, with $a_n$ being the coefficient in \eqref{eq:coh-A1},
$$
  Q_n(x):= a_n \frac{P_n(x;\CU_1)} {\|P_n(\cdot; \CU_1)\|_{\CU_1}^2} -
        \frac{P_{n-1}(x;\CU_1)} {\|P_{n-1}(\cdot; \CU_0)\|_{\CU_0}^2}.
$$

The final step at identifying all coherent pairs was taken in \cite{Mei97}, where the following theorem
was established.

\begin{thm}\label{thm:coh-classical}
If $\{\CU_0,\CU_1\}$ is a coherent pair, then at least one of them has to be classical in the extended sense.
\end{thm}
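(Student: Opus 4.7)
The plan is to use the functional identity \eqref{eq:coh-PQ} as the main engine and to extract from it a Pearson-type equation $\partial(\phi\,\CU) = \psi\,\CU$ with $\deg\phi\le 2$ and $\deg\psi=1$ for one of the two functionals; the characterization of classical functionals via the Pearson equation (recalled in Section 2.2) then yields the dichotomy.

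First I would write \eqref{eq:coh-PQ} for $n=1$ and $n=2$, giving
$$
\wt P_1(x)\,\CU_0 = \partial(Q_1 \CU_1), \qquad \wt P_2(x)\,\CU_0 = \partial(Q_2 \CU_1),
$$
where $\wt P_n := n P_n(\cdot;\CU_0)/\|P_n(\cdot;\CU_0)\|_{\CU_0}^2$ is of degree $n$, and $Q_n$ is the polynomial of degree $n$ defined just after \eqref{eq:coh-PQ}. Expanding $\partial(Q_n \CU_1) = Q_n' \CU_1 + Q_n \partial\CU_1$ and eliminating $\partial\CU_1$ between the two equations, by multiplying the first by $Q_2$ and the second by $Q_1$ and subtracting, produces a purely algebraic functional identity
$$
A(x)\,\CU_0 = B(x)\,\CU_1, \qquad A := Q_1 \wt P_2 - Q_2 \wt P_1, \quad B := Q_1 Q_2' - Q_2 Q_1',
$$
with $\deg A \le 3$ and $\deg B \le 2$. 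Multiplying the $n=1$ equation by $A$, using $A\CU_0 = B\CU_1$ on the left and Leibniz on the right, yields
$$
\partial\bigl(A\,Q_1\,\CU_1\bigr) = \bigl(A' Q_1 + \wt P_1 B\bigr)\,\CU_1,
$$
a candidate Pearson equation for $\CU_1$; a symmetric manipulation gives the analogous candidate for $\CU_0$.

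The main obstacle lies in degree control and the accompanying case analysis. Naively $\deg(A Q_1) \le 4$ and $\deg(A' Q_1 + \wt P_1 B) \le 3$, well above the Pearson bounds. To force the required cancellations in the leading coefficients, I would bring in the $n=3$ instance of \eqref{eq:coh-PQ}, which yields a second algebraic relation $\wt A\,\CU_0 = \wt B\,\CU_1$; compatibility of the two relations, together with the three-term recurrences that $\wt P_n$ and $Q_n$ inherit from $\CU_0$ and $\CU_1$, forces the top-degree terms of either $A$ or $B$ to vanish, reducing $\phi$ and $\psi$ to the Pearson-admissible degrees. A separate case handles the degenerate possibility $B\equiv 0$: then $A\,\CU_0 = 0$ and quasi-definiteness of $\CU_0$ forces $A\equiv 0$, collapsing the $n=1$ equation to a first-order Pearson relation that exhibits $\CU_1$ as classical directly (and symmetrically for $A\equiv 0$). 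In every case one is left with a Pearson equation $\partial(\phi\,\CU_i)=\psi\,\CU_i$ with $\deg\phi\le 2$ and $\deg\psi=1$ for $i=0$ or $i=1$, and matching against the Pearson data of $\CH$, $\CL_{\a}$, $\CJ_{\a,\beta}$, and the Bessel functional identifies $\CU_i$ within the extended classical family, completing the proof.
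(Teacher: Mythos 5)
Your setup coincides with the paper's: from \eqref{eq:coh-PQ} at $n=1,2$ you eliminate $\partial\CU_1$ and arrive at exactly the relations \eqref{eq:coh-relations}, with your $A$ and $B$ playing the roles of $\phi$ (degree at most $3$) and $\psi=Q_1Q_2'-Q_1'Q_2$ (degree exactly $2$). The gap is in the step that brings the degrees down to the Pearson range. Your proposed mechanism --- that compatibility with the $n=3$ instance of \eqref{eq:coh-PQ} ``forces the top-degree terms of either $A$ or $B$ to vanish'' --- cannot work: the leading coefficient of $B=Q_1Q_2'-Q_1'Q_2$ equals the product of the (nonzero) leading coefficients of $Q_1$ and $Q_2$, so $\deg B=2$ always, and there is no reason for the cubic term of $A$ to vanish either. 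Consequently your Leibniz manipulation only yields $\partial(AQ_1\,\CU_1)=(A'Q_1+\wt P_1 B)\,\CU_1$ with $\deg(AQ_1)\le 4$, i.e.\ that $\CU_1$ is \emph{semiclassical} of low class --- strictly weaker than the classical conclusion the theorem asserts. (Your degenerate case $B\equiv 0$ is vacuous for the same reason; and even if it occurred, $A\equiv B\equiv 0$ would leave the $n=1$ relation still coupling $\CU_0$ and $\CU_1$, not a Pearson equation for $\CU_1$ alone.)

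What Meijer's proof, as sketched in the paper, does at this juncture is qualitatively different: it exploits that $\psi$ has exactly two zeros and splits into cases. If the zeros coincide at a point $\xi$, one shows $Q_1(\xi)=0$ and hence $\phi(\xi)=0$, factors $\phi=(x-\xi)\wt\phi$ with $\deg\wt\phi\le 2$, and cancels the common linear factor in $\phi\,\CU_0=\psi\,\CU_1$ to obtain $\wt\phi\,\CU_0=c\,(x-\xi)\CU_1$ and from it a genuine Pearson equation $\partial(\wt\phi\,\CU_0)=\eta\,\CU_0$ with $\deg\eta=1$, so that $\CU_0$ is classical; if the zeros are distinct, a more involved analysis shows that it is $\CU_1$ that is classical. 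The reduction thus comes from extracting a common linear zero of $\phi$ and $\psi$, not from killing leading coefficients, and the dichotomy in the conclusion (which of the two functionals ends up classical) is governed by the discriminant of $\psi$ --- a feature your argument does not capture. You would need to replace your degree-reduction step by this factorization-and-case analysis to close the proof.
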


The proof in \cite{Mei97} started with the observation that the relation \eqref{eq:coh-PQ} with $n=1$ and $n=2$
implies the existence of polynomials $\phi$ of degree at most 3, $\chi$ of degree at most 2 and $\psi$ of degree
exactly 2, such that
\begin{equation}\label{eq:coh-relations}
 \phi \partial \CU_1 = \chi \CU_1, \quad \phi \CU_0 = \psi \CU_1, \quad  \chi \CU_0 = \psi \partial \CU_1,
\end{equation}
where $\phi$, $\chi$ and $\psi$ can be given explicitly in terms of $P_1(\cdot; \CU_0)$, $P_2(\cdot; \CU_0)$, $Q_1$ and $Q_2$.
In particular, $\psi = Q_1 Q_2' -Q_1' Q_2$. The polynomial $\psi$ has two zeros. If the two zeros coincide at the
point $\xi$, then $B'(\xi) = 0$ and, as a result, $Q_1(\xi) =0$, which can be used to show that $\phi(\xi) =0$.
Writing $\phi(x) = (x-\xi) \wt \phi(x)$, it can then be shown, by eliminating $\partial \CU_1$ in \eqref{eq:coh-relations},
that $\wt \phi_0 \CU_0  = c (x-\xi) \CU_1$, where $c$ is a constant, from which it follows that $\CU_0$ satisfies
the Pearson equation
$\partial (\wt \phi \CU_0) = \eta \CU_0$, where $\eta$ is a polynomial of degree $1$. As the solution of the
Pearson equation,  $\CU_0$ has to be classical. When the two zeros of $\psi$ are different, $\CU_1$ can be
shown to be classical; the analysis in this case, however, is more involved.

Together,  \cite{MP95,Mei97} give a complete list of coherent pairs. In the case of $\CU_0$ and $\CU_1$
are positive definite linear functionals associated with measures $d\mu_0$ and $d\mu_1$, these cases are given as follows:

\medskip \noindent
{\bf Laguerre case}
\begin{enumerate}
\item $d\mu_0 (x) = (x-\xi) x^{\a-1} e^{-x}dx$ and $d\mu_1(x) = x^{\a} e^{-x}dx$, where
if $\xi < 0$ then $\a >0$, and if $\xi = 0$ then $\a > -1$.
\item $d\mu_0 (x) = x^{\a} e^{-x}dx$ and $d\mu_1(x) = \frac {x^{\a+1} e^{-x}}{x-\xi}dx + M \delta_{\xi}$, where
if $\xi < 0$, $\a >-1$ and $M \ge 0$.
\item $d\mu_0 (x) =  e^{-x}dx + M \delta_{0}$ and $d\mu_1(x) = e^{-x}dx$, where $M \ge 0$.
\end{enumerate}

\medskip \noindent
{\bf Jacobi case}
\begin{enumerate}
\item $d\mu_0 (x) = |x-\xi| (1-x)^{\a-1} (1+x)^{\beta-1} dx$ and $d\mu_1(x) =(1- x)^{\a}(1+x)^\beta dx$, where
if $|\xi| > 1$ then $\a >0$ and $\beta >0$, if $\xi = 1$ then $\a > -1$ and $\beta > 0$, and if $\xi = -1$ then
$\a > 0$ and $\beta > -1$.
\item $d\mu_0 (x) =(1-x)^{\a} (1+x)^{\beta} dx$ and $d\mu_1(x) = \frac{1}{|x-\xi|}(1- x)^{\a+1}(1+x)^{\beta+1} dx + M \delta_{\xi} $,
where $|\xi| > 1$, $\a > -1$ and $\beta > -1$, and $M \ge 0$.
\item $d\mu_0 (x) =  (1+x)^{\beta-1} dx + M \delta_{1}$ and $d\mu_1(x) = (1+x)^\beta dx$, where $\beta > 0$ and $M \ge 0$.
\item $d\mu_0 (x) =  (1-x)^{\a-1} dx + M \delta_{-1}$ and $d\mu_1(x) = (1-x)^\a dx$, where $\a > 0$ and $M \ge 0$.
\end{enumerate}

The similar analysis was also carried out for symmetrically coherent pairs in the work cited above. They lead to the
following list of symmetrically coherent pairs.

\medskip \noindent
{\bf Hermite case}
\begin{enumerate}
\item $d\mu_0 (x) =  e^{-x^2} dx$ and $d\mu_1(x) = \frac{1}{x^2 + \xi^2} e^{-x^2} dx$, where $\xi \ne 0$.
\item $d\mu_0 (x) = (x^2 + \xi^2) e^{-x^2} dx$ and $d\mu_1(x) = e^{-x^2} dx$, where $\xi \ne 0$.
\end{enumerate}

\medskip \noindent
{\bf Gegenbauer case}
\begin{enumerate}
\item $d\mu_0 (x) = (1-x^2)^{\a-1} dx$ and $d\mu_1(x) = \frac{1}{x^2 + \xi^2} (1-x^2)^\a dx$,
  where $\xi \ne 0$ and $\a >0$.
\item $d\mu_0 (x) = (1-x^2)^{\a-1} dx$ and $d\mu_1(x) = \frac{1}{\xi^2 - x^2} (1-x^2)^\a dx + M \delta_{\xi}
   + M \delta_{-\xi}$,
  where $|\xi | \ge 1$, $\a >0$ and $M \ge 0$.
\item $d\mu_0 (x) = (x^2+\xi^2)(1-x^2)^{\a-1} dx$ and $d\mu_1(x) = (1-x^2)^\a dx$, where $\a  >0$.
\item $d\mu_0 (x) = (\xi^2-x^2)(1-x^2)^{\a-1} dx$ and $d\mu_1(x) = (1-x^2)^\a dx$, where $|\xi| \ge 1$ and
$\a  >0$.
\item $d\mu_0(x) = dx + M \delta_{1} + M \delta_{-1}$ and $d\mu_1(x) = dx$, where $M \ge 0$.
\end{enumerate}

\subsection{Generalized coherent pairs}
We deduced the identity \eqref{eq:SOP-OP-A1} from the definition  \eqref{eq:coh-A1} of the coherent pair. In the
reverse direction, however, \eqref{eq:coh-A1} does not follow from the identity \eqref{eq:SOP-OP-A1}, as
observed in \cite{KKMY02}. We restate  \eqref{eq:SOP-OP-A1} below,
\begin{equation*}
  S_n(x;\lambda) + b_{n-1}(\lambda) S_{n-1}(x;\lambda) =  P_n(x;d\mu_0) + \wh a_{n-1} P_{n-1}(x;d\mu_0), \quad n \ge 1.
  \tag{\ref{eq:SOP-OP-A1}'}
\end{equation*}
For convenience, we let $\sS_n(x)$ denote the left hand side of (\ref{eq:SOP-OP-A1}'). Clearly $\sS_n'$ can be
expanded in terms of $\{P_k(\cdot; d\mu_1)\}$,
$$
    \sS_n'(x) = n P_{n-1}(x;d\mu_1) + \sum_{k=0}^{n-2} d_{k,n} P_k(x;d\mu_1), \qquad d_{k,n} =
         \frac{\la \sS_n', P_k(\cdot; d\mu_1)\ra_{d \mu_1}} {\|P_k(\cdot;d\mu_1)\|_{d\mu_1}^2}.
$$
For $0 \le j \le n-2$, it follows directly from the definition of $S_n$ that $\la \sS_n, P_j(\cdot;d\mu_1) \ra_\lambda =0$
and it follows from \eqref{eq:SOP-OP-A1} that $\la \sS_n, P_j(\cdot;d\mu_1) \ra_{d\mu_0} =0$. Consequently,
by the definition of $\la \cdot; \cdot \ra_\lambda$, we must have $\la \sS_n',  P_j(\cdot;d\mu_1) \ra_{d\mu_1} =0$ for
$0 \le j \le n-2$, which implies that $d_{k,n} = 0$ if $0\le k \le n-2$. Hence,
$$
   \sS_n'(x) = P_{n}' (x;d \mu_0) + \wh a_{n-1} P_{n-1}'(x;d\mu_0) = n P_{n-1}(x;d\mu_1)  + d_{n-2,n} P_{n-2}(x;d\mu_1).
$$
Recall that $\wh a_n = (n+1) a_n /n$. Setting $b_{n-2} = d_{n-2,n}/n$ and shift the index from $n$ to $n+1$, we
conclude the following relation between $\{P_n(\cdot;d\mu_0)\}$ and $\{P_n(\cdot;d\mu_1)\}$,
\begin{equation}\label{eq:cohAA3}
 P_{n}(x;d\mu_1)  + b_{n-1} P_{n-1}(x;d\mu_1) =  \frac{P_{n+1}' (x;d \mu_0)}{n+1} + a_{n} \frac{P_{n}'(x;d\mu_0)}{n},
   \quad n \ge 1.
\end{equation}
Thus, in the reverse direction, \eqref{eq:SOP-OP-A1} leads to \eqref{eq:cohAA3} instead of \eqref{eq:coh-A1}.

Evidently, \eqref{eq:cohAA3} is a more general relation than that of \eqref{eq:coh-A1}. This suggests the
following definition.

\begin{defn}
The pair $\{d\mu_0, d\mu_1\}$ is called a generalized coherent pair if \eqref{eq:cohAA3} holds for all $n \ge 1$, and
this definition extends to the linear functionals $\{\CU_0, \CU_1\}$.
\end{defn}

\begin{thm}
Let $\CU_0, \CU_1$ be two linear functionals. Then $\{\CU_0,\CU_1\}$ is a generalized coherent pair if and
only if  \eqref{eq:SOP-OP-A1} holds.
\end{thm}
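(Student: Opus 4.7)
The plan is to prove both implications by the same kind of orthogonality-plus-bookkeeping argument that underlies Theorem~\ref{thm:SOP-OP-A1}. The forward direction $(\ref{eq:SOP-OP-A1}')\Rightarrow(\ref{eq:cohAA3})$ is essentially the computation carried out in the paragraphs immediately preceding the statement, while the reverse direction mirrors the proof of Theorem~\ref{thm:SOP-OP-A1} with the roles of the two identities swapped.

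For $(\ref{eq:SOP-OP-A1}')\Rightarrow(\ref{eq:cohAA3})$, I would set
$$
\sS_n := S_n(\cdot;\lambda) + b_{n-1}(\lambda) S_{n-1}(\cdot;\lambda) = P_n(\cdot;\CU_0) + \wh a_{n-1} P_{n-1}(\cdot;\CU_0),
$$
expand $\sS_n'$ in the basis $\{P_k(\cdot;\CU_1)\}_{k=0}^{n-1}$, and kill the low-order coefficients as follows: for each $k\le n-3$ pick an antiderivative $\Phi_k$ of $P_k(\cdot;\CU_1)$, so that $\deg\Phi_k\le n-2$; the left-hand expression for $\sS_n$ yields $\la\sS_n,\Phi_k\ra_\lambda = 0$ and the right-hand one yields $\la\sS_n,\Phi_k\ra_{\CU_0} = 0$, hence by the definition of $\la\cdot,\cdot\ra_\lambda$, $\la\sS_n',P_k(\cdot;\CU_1)\ra_{\CU_1}=0$. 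Monicity of $\sS_n$ then pins down the leading coefficient, giving $\sS_n' = n P_{n-1}(\cdot;\CU_1) + n b_{n-2}\, P_{n-2}(\cdot;\CU_1)$ for some scalar $b_{n-2}$; dividing by $n$, using $\wh a_{n-1}/n = a_{n-1}/(n-1)$, and shifting $n\mapsto n+1$ yields (\ref{eq:cohAA3}).

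For $(\ref{eq:cohAA3})\Rightarrow(\ref{eq:SOP-OP-A1}')$, I would define the monic polynomial
$$
T_n(x) := P_n(x;\CU_0) + \wh a_{n-1} P_{n-1}(x;\CU_0), \qquad \wh a_{n-1} := \frac{n\, a_{n-1}}{n-1},
$$
and observe, after replacing $n$ by $n-1$ in (\ref{eq:cohAA3}) and multiplying through by $n$, that
$$
T_n'(x) = n\bigl[P_{n-1}(x;\CU_1) + b_{n-2} P_{n-2}(x;\CU_1)\bigr].
$$
I would then verify $\la T_n, S_k(\cdot;\lambda)\ra_\lambda = 0$ for $0\le k \le n-2$ by splitting the Sobolev form into its two pieces: the $\CU_0$-part vanishes because $T_n$ is a linear combination of $P_n(\cdot;\CU_0)$ and $P_{n-1}(\cdot;\CU_0)$ and $\deg S_k \le n-2$; the $\CU_1$-part $\la T_n', S_k'\ra_{\CU_1}$ vanishes because $T_n'$ is a combination of $P_{n-1}(\cdot;\CU_1)$ and $P_{n-2}(\cdot;\CU_1)$ while $\deg S_k' \le n-3$. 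Expanding $T_n$ in $\{S_j(\cdot;\lambda)\}_{j=0}^{n}$ and using that both $T_n$ and $S_n(\cdot;\lambda)$ are monic of degree $n$ then forces
$$
T_n = S_n(\cdot;\lambda) + b_{n-1}(\lambda) S_{n-1}(\cdot;\lambda), \qquad b_{n-1}(\lambda) = \frac{\la T_n, S_{n-1}(\cdot;\lambda)\ra_\lambda}{\|S_{n-1}(\cdot;\lambda)\|_\lambda^2},
$$
which is $(\ref{eq:SOP-OP-A1}')$.

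The proof is almost purely symbolic, so no single step should be genuinely hard; the main bookkeeping subtlety is synchronising the two index shifts with the coefficient conversion $\wh a_n = (n+1)a_n/n$ that translates the constants in (\ref{eq:SOP-OP-A1}') to those in (\ref{eq:cohAA3}). Existence of the monic Sobolev sequence $\{S_n(\cdot;\lambda)\}_{n\ge 0}$ --- equivalently, quasi-definiteness of the Sobolev form --- is assumed implicitly, as throughout the section.
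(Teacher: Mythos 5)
Your proof is correct and follows essentially the same route as the paper: the forward direction is the computation the paper carries out just before the theorem (your antiderivative test functions $\Phi_k$ with $\deg\Phi_k\le n-2$ are just a cleaner packaging of the paper's testing of $\sS_n$ against $P_j(\cdot;d\mu_1)$ for $j\le n-2$, and you correctly get vanishing of the coefficients only up to index $n-3$, leaving the $P_{n-2}(\cdot;\CU_1)$ term), while the reverse direction reproduces the paper's argument with $\sT_{n+1}$ up to an index shift. The bookkeeping with $\wh a_{n-1}=na_{n-1}/(n-1)$ is the consistent convention used in the surrounding text, so nothing further is needed.
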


\begin{proof}
We have already established that \eqref{eq:SOP-OP-A1} implies \eqref{eq:cohAA3}. Assume now that
\eqref{eq:cohAA3} holds. Let $\sT_{n+1}(x): = P_{n+1}(x;\CU_0) + \wh a_n P_n(x; \CU_0)$, where
$\wh a_n = (n+1) a_n/n$. For any polynomial  $p_{n-1}$ of degree at most $n-1$, it follows directly from the
definition that $\la \sT_{n+1}, p_{n-1} \ra_{\CU_0} = 0$ and it follows from \eqref{eq:cohAA3} that
$\la \sT_{n+1}', p_{n-1}' \ra_{\CU_1} =0$. Consequently, $\la \sT_{n+1}, p_{n-1}\ra_\lambda =0$. Hence, expanding
$\sT_{n+1}$ in terms of $S_n(x;\lambda)$, we see that
$$
   \sT_{n+1}(x) = S_{n+1}(x;\lambda) + b_n(\lambda) S_n(x;\lambda), \qquad n \ge 0,
$$
which is precisely \eqref{eq:SOP-OP-A1} with $n$ replaced by $n+1$.
\end{proof}

Two examples of generalized coherent pairs were given in \cite{BR01, BBR03} when $\CU_0$ is a polynomial
perturbation of degree $1$ of the Laguerre or Jacobi linear functional.

\begin{exam}
In these examples, $\{d\mu_0, d\mu_1\}$ is not a coherent pair but a generalized coherent pair. For the
first example, $\a, \beta > -1$ and $|\xi_0|, |\xi_1| \ge 1$,
$$
 d\mu_0 = (1-x)^\a (1+x)^\beta dx, \quad  d\mu_1 = \frac{x-\xi_0}{x-\xi_1}(1-x)^{\a+1} (1+x)^{\beta +1}dx + M\delta_{\xi_1}.
$$
For the second example, $\a > -1$ and $\xi \le 0$,
$$
d\mu_0 = x^\a e^{-x}dx \quad d\mu_1 =  \frac{x-\xi_0}{x-\xi} x^{\a+1} e^{-x} dx + M\delta_{\xi}.
$$
\end{exam}

As in the case of coherent pairs, one can ask the question of identifying all generalized coherent pairs.
If $\CU_0$ is a classical linear functional, then the answer to this question was given in \cite{AMPR03}.
The complete identification was achieved in \cite{DM04}, which states that either $\CU_0$ or $\CU_1$ must
be a semiclassical linear functional.

\begin{defn}
Let $s$ be a nonnegative integer. A linear functional $\CU$ is called semiclassical of class $s$ if there exist
polynomials $\phi$ and $\psi$ such that $s= \max \{\deg \phi -2, \deg \psi -1\}$ and
$\partial (\phi \CU) = \psi \CU$.
\end{defn}

An in-depth study of orthogonal polynomials with respect to a semiclassical linear functional is given in \cite{Mar91},
which contains the following theorem.

\begin{thm} \label{thm:semiclassical}
For a semiclassical linear functional $\CU$ of class $s$, the following are equivalent:
\begin{enumerate}[\quad 1.]
\item $\{P_n\}_{n\geq0}$ is a sequence of monic orthogonal polynomials with respect to $\CU$;
\item There exists a monic polynomial $\phi$ such that $\la \phi \CU, P_n' Q\ra =0$ for all $Q \in \Pi_{n-s-1}$.
\item There exists a monic polynomial $\phi$ of degree $r$ and constants $a_{n,j}$ such that
$$
  \phi(x) P_{n+1}'(x)= \sum_{j=n-s}^{n+r} a_{n,j} P_j(x), \quad \hbox{where} \quad a_{n,n-s} \ne 0.
$$
\end{enumerate}
\end{thm}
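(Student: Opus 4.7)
The plan is to close the cycle $(1) \Rightarrow (2) \Rightarrow (3) \Rightarrow (1)$, exploiting at every step the Pearson-type equation $\partial(\phi \CU) = \psi \CU$ with $\max\{\deg \phi - 2, \deg \psi - 1\} = s$ supplied by the semiclassical hypothesis on $\CU$; in (2) and (3) this monic polynomial $\phi$ will be the one appearing in the Pearson equation. The first implication $(1) \Rightarrow (2)$ rests entirely on integration by parts. Combining the defining identity $\la \partial(\phi \CU), R\ra = -\la \phi \CU, R'\ra$ with the Pearson equation and applying the result to $R = P_n Q$ yields
\[
\la \phi \CU, P_n' Q\ra = -\la \CU, P_n (\psi Q + \phi Q')\ra.
\]
The bounds $\deg \psi \le s+1$ and $\deg \phi \le s+2$ are exactly what is needed to force $\psi Q + \phi Q' \in \Pi_{n-1}$ whenever $Q$ lies in the degree window prescribed by the class $s$, at which point orthogonality of $P_n$ against $\Pi_{n-1}$ kills the right-hand side.

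For $(2) \Rightarrow (3)$, expand the polynomial $\phi P_{n+1}'$, of degree at most $n + r$ with $r := \deg \phi$, in the basis $\{P_k\}_{k \le n+r}$:
\[
\phi(x) P_{n+1}'(x) = \sum_{k=0}^{n+r} a_{n,k} P_k(x).
\]
Pairing both sides with $P_j$ under $\CU$, condition (2) with $Q = P_j$ makes the left side vanish for $j$ below the appropriate cutoff, while orthogonality collapses the right side to $a_{n,j} \la \CU, P_j^2\ra$; this forces $a_{n,j} = 0$ for $j \le n - s - 1$, producing the truncated expansion starting at $j = n-s$. The non-vanishing of $a_{n, n-s}$ encodes the minimality of the class: were $a_{n, n-s}$ to vanish for infinitely many $n$, combining the structure relation with the Pearson equation would manufacture a Pearson-type equation for $\CU$ of strictly smaller class, contradicting the assumption that the class of $\CU$ is exactly $s$.

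For $(3) \Rightarrow (1)$, proceed by induction on $n$, showing $\la \CU, P_{n+1} P_k\ra = 0$ for $0 \le k \le n$ under the inductive hypothesis that $\{P_0, \dots, P_n\}$ have already been shown pairwise orthogonal with respect to $\CU$. Multiplying the structure relation $\phi P_{n+1}'(x) = \sum_{j=n-s}^{n+r} a_{n,j} P_j(x)$ by suitable test polynomials, transferring the derivative back onto $\CU$ through the Pearson equation, and matching coefficients by means of the non-degeneracy $a_{n, n-s} \ne 0$ lets one isolate and extract the vanishing of the new moments $\la \CU, P_{n+1} P_k\ra$. I expect this last implication to be the hardest step: recovering orthogonality from a purely structural identity is delicate, since the induction must simultaneously track degrees throughout, handle the base cases $n \le s$ where the lower summation index $n - s$ becomes non-positive and the structure relation degenerates, and combine coherently the Pearson equation for $\CU$ with the non-degeneracy coefficient $a_{n, n-s}$ to close the step at each $n$.
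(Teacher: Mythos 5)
The paper does not prove this theorem at all: it is quoted from Maroni's work \cite{Mar91}, so there is no internal proof to compare yours against and the attempt has to be judged on its own merits. Two things go wrong. First, in $(1)\Rightarrow(2)$ your identity $\la\phi\CU, P_n'Q\ra = -\la\CU, P_n(\psi Q+\phi Q')\ra$ is correct, but the degree count you rely on is not: with $\deg\psi\le s+1$, $\deg\phi\le s+2$ and $Q\in\Pi_{n-s-1}$ one only gets $\deg(\psi Q)\le n$ and $\deg(\phi Q')\le n$, so $\psi Q+\phi Q'$ can have degree exactly $n$ and orthogonality of $P_n$ against $\Pi_{n-1}$ does not kill the pairing. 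In fact item 2 as printed is false: for the Hermite functional ($s=0$, $\phi=1$) and $Q=P_{n-1}\in\Pi_{n-1}$ one has $\la\CU,P_n'P_{n-1}\ra=n\|P_{n-1}\|^2\ne0$. The statement only closes with $Q\in\Pi_{n-s-2}$, equivalently with $P_{n+1}'$ in place of $P_n'$; that reading is also the only one consistent with item 3, since pairing the expansion of $\phi P_{n+1}'$ against $P_j$ under your reading of item 2 would force $a_{n,j}=0$ for all $j\le n-s$ and contradict $a_{n,n-s}\ne0$. You should have detected and repaired this index shift rather than asserting that the bounds work ``exactly''.

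Second, the two genuinely hard points are not proved. The claim $a_{n,n-s}\ne0$ is dispatched with one sentence about manufacturing a Pearson-type equation of smaller class; this is precisely Maroni's class-reduction criterion and requires a real argument (one must show that if the expansion of $\phi P_{n+1}'$ starts above $n-s$, then $\phi$ and $\psi$ acquire a common factor that can be divided out of $\partial(\phi\CU)=\psi\CU$, lowering the class). And $(3)\Rightarrow(1)$ is not an argument but an announcement of a plan, which you yourself flag as the hardest step; note moreover that item 3 as literally printed makes no reference to $\CU$, so without reformulating the theorem (taking $\{P_n\}$ to be the monic orthogonal sequence of $\CU$ throughout, with the three items characterizing the semiclassical character and the class) that implication cannot even be stated meaningfully, let alone proved. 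The overall architecture---integration by parts for $(1)\Rightarrow(2)$, Fourier expansion of $\phi P_{n+1}'$ for $(2)\Rightarrow(3)$---is the standard and correct one, but as written the proposal proves neither the statement as printed (which is false as it stands) nor the corrected one.
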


Every semiclassical linear functional of class $0$ is classical. The next case, semiclassical of
class $1$, is used to characterize the generalized coherent pairs. The main result in \cite{DM04} is the following
theorem.

\begin{thm}
If $\{\CU_0, \CU_1\}$ is a generalized coherent pair, then at least one of them has to be semiclassical of
class at most $1$.
\end{thm}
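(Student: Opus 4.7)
The plan is to follow the strategy used for the coherent-pair case (Theorem~\ref{thm:coh-classical}) but with the added flexibility allowed by the extra term in the generalized-coherence relation, so that instead of producing a classical functional we only force one of $\CU_0,\CU_1$ to satisfy a Pearson equation $\partial(\phi\CU)=\psi\CU$ with $\max\{\deg\phi-2,\deg\psi-1\}\le 1$. The starting point is to derive, from \eqref{eq:cohAA3}, a functional identity analogous to \eqref{eq:coh-PQ}. Concretely, multiply both sides of \eqref{eq:cohAA3} by an arbitrary polynomial $q$, integrate against $\CU_1$, use the orthogonality of the $P_n(\cdot;\CU_1)$ on the left, and integrate by parts on the right to transfer derivatives onto $\CU_0$. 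This will produce an identity of the form
\begin{equation*}
   \widetilde{A}_n\,\CU_0 \;=\; \partial(\widetilde{B}_n\,\CU_1),\qquad n\ge 1,
\end{equation*}
where $\widetilde{A}_n$ and $\widetilde{B}_n$ are explicit polynomials built out of $P_n(\cdot;\CU_0)$, $P_n(\cdot;\CU_1)$ and the coefficients $a_n,b_n$ from \eqref{eq:cohAA3}, with degrees one larger than in the coherent case because of the additional $b_{n-1}P_{n-1}(\cdot;\CU_1)$ term.

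Next, I would specialize this identity to the first few values $n=1,2,3$. Each specialization is a polynomial linear combination of $\CU_0$, $\CU_1$ and $\partial\CU_1$ with explicit polynomial coefficients of low degree. Combining these three relations (and using $\partial(x^k\CU_1)=kx^{k-1}\CU_1+x^k\partial\CU_1$ to normalize) I expect to obtain a system of the form
\begin{equation*}
   \phi\,\partial\CU_1 = \chi\,\CU_1,\qquad \phi\,\CU_0=\psi\,\CU_1,\qquad \chi\,\CU_0=\psi\,\partial\CU_1 + \rho\,\CU_1,
\end{equation*}
where $\phi,\chi,\psi,\rho$ are explicit polynomials, but now one degree higher in each slot than in Meijer's analysis (so $\deg\phi\le 4$, $\deg\psi\le 3$, $\deg\chi\le 3$, $\deg\rho\le 2$). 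This is the algebraic core: the generalized coherence allows for an extra parameter $b_{n-1}$, which inflates the admissible degrees by exactly one, matching the class-$1$ bound.

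From this system I would eliminate $\partial\CU_1$ (using the first relation) and then eliminate $\CU_1$ (using the second) to obtain a single relation of the form $\widetilde\phi\,\CU_0 = \widetilde\psi\,\partial\CU_0$, or, after rearrangement, a Pearson equation $\partial(\phi_0\CU_0)=\psi_0\CU_0$ with $\deg\phi_0\le 3$ and $\deg\psi_0\le 2$, which makes $\CU_0$ semiclassical of class at most $1$. The case analysis will split, as in \cite{Mei97}, according to whether certain polynomials (most importantly the discriminant-type polynomial built from the lowest-degree $Q_n$'s) have a double or two simple zeros. In the degenerate case, the elimination will instead force $\CU_1$ to be the semiclassical one; in the generic case it is $\CU_0$.

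The main obstacle will be this case analysis, specifically showing that the elimination never degenerates to a trivial identity. Exactly as in the classical coherent-pair argument, one has to check that the polynomial playing the role of Meijer's $\psi = Q_1 Q_2' - Q_1'Q_2$ is nonzero and has at least one exploitable root, and to show that at every branch at least one Pearson equation of the required low class survives. The bookkeeping is significantly heavier than in \cite{Mei97} because each $Q_n$ now carries an extra lower-order term, but the structural argument — obtain functional Pearson-type relations from the first few coherence identities, then force a Pearson equation by elimination — is the same.
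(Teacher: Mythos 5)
Your plan follows essentially the same route as the paper's (sketched) argument: derive the functional identity $\partial(Q_n\,\CU_1)=R_{n+1}\,\CU_0$ as the analogue of \eqref{eq:coh-PQ}, specialize to the first few values of $n$ to obtain a system of Pearson-type relations among $\CU_0$, $\CU_1$ and $\partial\CU_1$ with low-degree polynomial coefficients, and then eliminate, splitting the analysis on the zeros of the quadratic $\psi$ exactly as in \cite{Mei97}. The only discrepancies are cosmetic and would be corrected in carrying out the computation: the paper gets by with $n=1,2$ only, obtains $\psi=R_1R_2'-R_2$ of degree exactly $2$ and a clean third relation $\chi\,\CU_0=\psi\,\partial\CU_1$ without your extra $\rho\,\CU_1$ term, and assigns the two branches the other way around (coincident zeros of $\psi$ force $\CU_0$ to be semiclassical of class at most $1$, while distinct zeros force $\CU_1$).
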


The proof is based on the observation that if $\{\CU_0, \CU_1\}$ is a generalized coherent pair, then
there are polynomials $Q_n$ of degree $n$ and $R_{n+1}$ of degree at most $n+1$, such that
$$
       \partial (Q_n \CU_1) =  R_{n+1} \CU_0, \qquad n \ge 1,
$$
which is an analogue of \eqref{eq:coh-PQ}. This relation with $n =1$ and $n =2$ is used to deduce the
existence of polynomials $\phi$ of degree at most 4, $\chi$ of degree at most 3 and $\psi$ of degree
exactly 2, such that
\begin{equation} \label{eq:coh-relations2}
 \phi \partial \CU_1 = \chi \CU_1, \quad \phi \CU_0 = \psi \CU_1, \quad  \chi \CU_0 = \psi \partial \CU_1,
\end{equation}
where $\phi$, $\chi$ and $\psi$ can be given explicitly in terms of $R_1$, $R_2$, $Q_1$ and $Q_2$.
In particular, $\psi = R_1 R_2' - R_2$. This is an analogue of \eqref{eq:coh-relations} for the coherent pair,
in which the degrees of $\phi$ and $\chi$ are increased by 1. The polynomial $\psi$ has two zeros. If
the two zeros coincide, say at $\xi$, then it can be shown that $\phi(x) = (x-\xi) \wt \phi(x)$ and $\CU_0$
satisfies $\partial (\wt \phi \CU_0) = \eta \CU_0$, where $\eta$ is a polynomial of degree at most 2. This
shows, by Theorem \ref{thm:semiclassical}, that $\CU_0$ is semiclassical of class at most 1. In the
case of that $\psi$ has two distinct zeros, a more involved analysis shows that $\phi$ vanishes at
one of the zeros, say $\xi$, so that $\phi(x) = (x-\xi)\wt \phi(x)$, which can be used to show that
$\CU_1$ has to be semiclassical of class at most 1. Furthermore, in the latter case, $\CU_0$ is
determined by $\wt \phi \CU_0 = (x-\xi) \CU_1$, where $\xi$ is either one of the two zeros of $\psi$.

The complete identification of generalized coherent pairs can be worked out by examine all possible
$\wt \phi$. Up to a linear change of variables, if $\deg \wt \phi = 3$, it has three canonical
forms, $\wt \phi(x) = x^3$, $x^2(x-1)$, or $x(x-1)(x-\lambda)$; if $\deg \wt \phi = 2$, it has two
canonical forms, $\wt \phi(x) = x^2$ or $x(x-1)$; if $\deg \wt \phi =1$, then $\wt \phi(x) =x$. The
arduous work of analyzing all cases was carried out meticulously in \cite{DM04}, where the complete
list of generalized coherent pairs can be found in two long tables.

\subsection{$(M,N)$ coherent pairs} The generalized coherent pair suggests immediately further
generalization of coherent pair by extending \eqref{eq:cohAA3}.

\begin{defn}
Let $M, N$ and $m, n$ be nonnegative integers. A pair of linear functional $\{\CU_0, \CU_1\}$
is said to be a $(M,N)$-coherent pair of order $(m, n)$ if, for $k \ge 0$,
$$
 \frac{P_{k+m}^{(m)}(x, \CU_0)}{(k+1)_m} + \sum_{i=1}^M a_{i,k} \frac{P_{k-i+m}^{(m)}(x,\CU_0)}{(k-i+1)_m}
     = \frac{P_{k+n}^{(n)}(x, \CU_1)}{(k+1)_n} + \sum_{i=1}^N b_{i,k} \frac{P_{k-i+n}^{(n)}(x,\CU_1)}{(k-i+1)_n},
$$
where $a_{i,k}$ and $b_{i,k}$ are complex numbers such that $a_{M,k} \ne 0$ if $k \ge M$,
$b_{N,k} \ne 0$ if $k \ge N$, and $a_{i,k} = b_{i,k} = 0$ if $i > k$.
\end{defn}

In this definition, the coherent pair is the $(0,1)$-coherent pair of order $(0,1)$. The generalized coherent pair
is $(1,1)$-coherent pair of order $(0,1)$. The $(0,2)$-coherent pair of order $(0,1)$ was considered in \cite{KLM01}
and the $(0,N)$-coherent pair of oder $(0,1)$ was called $N$-coherent pair and studied in \cite{MMM01}.
The general setting of $(M,N)$-coherent pairs of order $(m,n)$ was studied more recently in \cite{JP08, JP13, JMPP14}.

Let $a_{0,k} :=1$ for $0 \le k \le N-1$ and $b_{0,k}: =1$ for $0 \le k \le M-1$. Set
\begin{align*}
A_{N}:= & \left[\begin{matrix}
    a_{0,0} & \cdots   & \cdots & \!\!  a_{M,M} & & \bigcirc  \\
      & a_{0,1} & \cdots & \cdots & \!\!  \!\! a_{M,M+1} &  \\
     & &  \ddots & \cdots &   \quad \ddots & \\
     \bigcirc &&&  a_{0,N-1} & \ldots & \!\! \!\!  a_{M, M+N-1}
  \end{matrix}\right], \\
B_M := & \left[\begin{matrix}
    b_{0,0} & \cdots   & \cdots & \!\!  b_{N,N} & & \bigcirc  \\
      & b_{0,1} & \cdots & \cdots & \!\!  \!\! b_{N,N+1} &  \\
     & &  \ddots & \cdots &   \quad \ddots & \\
     \bigcirc &&&  b_{0,M-1} & \ldots & \!\! \!\!  b_{N, M+N-1}
  \end{matrix}\right], \qquad   L_{M+N}:= \left[ \begin{matrix} A_N \\ B_M \end{matrix} \right].
\end{align*}
Then $A_N$ is a matrix of size $N \times (M+N)$, $B_M$ is a matrix of size $M \times (M+N)$, and
$L_{M+N}$ is a square matrix of size $(M+N ) \times (M+N)$.

\begin{thm}
Let $\{\CU_0, \CU_1\}$ be a $(M,N)$-coherent pair of order $(m, n)$ with $m \ge n$ and assume that
$L_{M+N}$ is a non-singular matrix. Then there exist polynomials $Q_{M+n+k}$ of degree $M+n+k$ and
$R_{N+m+k}$ of degree at most $N+m+k$, such that
\begin{equation} \label{eq:gen-coh-QR}
       \partial^{m-n} (Q_{M+n+k} \CU_1) =  R_{N+m+k} \CU_0, \qquad k \ge 0.
\end{equation}
Furthermore, the following classification holds
\begin{enumerate}[\quad \rm (1)]
 \item If $m =n$, then $\CU_0$ is semiclassical if and only if $\CU_1$ is semiclassical.
 \item If $m > n$, then $\CU_0$ and $\CU_1$ are both semiclassical.
\end{enumerate}
\end{thm}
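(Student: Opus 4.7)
The plan has two parts: first, construct the distributional identity \eqref{eq:gen-coh-QR}; second, derive the semiclassicality statements by combining two instances of that identity. For the identity, I would multiply the $(M,N)$-coherence relation by $(k+1)_m(k+1)_n$ and introduce auxiliary polynomials
$\sS_k(x) = P_{k+m}(x;\CU_0) + \sum_{i=1}^M a_{i,k}\frac{(k+1)_m}{(k-i+1)_m}P_{k-i+m}(x;\CU_0)$
of degree $k+m$, and $\sT_k(x)$ of degree $k+n$ defined analogously from the right-hand side. By construction $(k+1)_n\sS_k^{(m)} = (k+1)_m\sT_k^{(n)}$, and orthogonality of the underlying monic polynomials yields $\langle\CU_0,\sS_k q\rangle=0$ for $q\in\Pi_{k+m-M-1}$ and $\langle\CU_1,\sT_k r\rangle=0$ for $r\in\Pi_{k+n-N-1}$. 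The natural candidates are then $Q_{M+n+k}:=\sT_{k+M}$ (of exact degree $M+n+k$) together with $R_{N+m+k}$ taken as a linear combination of $\sS_k,\ldots,\sS_{k+N}$ whose coefficients are fixed by requiring the dual identity $\langle\CU_0,R_{N+m+k}p\rangle = (-1)^{m-n}\langle\CU_1, Q_{M+n+k}p^{(m-n)}\rangle$ to hold for all $p\in\Pi$. Iterated integration by parts together with $(k+1)_n\sS_k^{(m)} = (k+1)_m\sT_k^{(n)}$ makes the identity automatic on high-degree test polynomials, and the nonsingularity of $L_{M+N}$ is precisely the condition that the residual finite linear system (obtained by testing on the lowest-degree $p$'s) uniquely determines the coefficients of $R_{N+m+k}$ within the advertised degree bound.

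For part (1), setting $m=n$ collapses \eqref{eq:gen-coh-QR} to the algebraic identity $Q_{M+k}\CU_1=R_{N+k}\CU_0$, exhibiting $\CU_1$ as a rational modification of $\CU_0$ (and vice versa). Starting from a Pearson equation $\partial(\phi\CU_0)=\psi\CU_0$, a short Leibniz computation applied to $\partial(\phi Q\CU_1)=\partial(\phi R\CU_0)$ combined with $Q\CU_1=R\CU_0$ transfers semiclassicality in either direction, essentially by the same mechanism that underlies Christoffel/Uvarov-type perturbations.

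For part (2), instantiate \eqref{eq:gen-coh-QR} at $k=0$ and $k=1$, producing $\partial^{m-n}(Q_1\CU_1)=R_1\CU_0$ and $\partial^{m-n}(Q_2\CU_1)=R_2\CU_0$. Eliminate $\CU_0$ via $R_2\partial^{m-n}(Q_1\CU_1)-R_1\partial^{m-n}(Q_2\CU_1)=0$ and expand with Leibniz; for $m-n=1$ this yields the Pearson equation $\partial((R_2Q_1-R_1Q_2)\CU_1)=(R_2'Q_1-R_1'Q_2)\CU_1$, and for general $m-n\ge 1$ an inductive reduction drops the outer derivative order one step at a time until a first-order equation $\partial(\phi\CU_1)=\psi\CU_1$ emerges, so Theorem~\ref{thm:semiclassical} gives semiclassicality of $\CU_1$. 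Conversely, forming $Q_2\partial^{m-n}(Q_1\CU_1)-Q_1\partial^{m-n}(Q_2\CU_1)$ and telescoping through the Wronskian combination $Q_1Q_2'-Q_1'Q_2$ produces an algebraic relation $A\CU_1=B\CU_0$ with polynomial $A,B$, so $\CU_0$ is a rational modification of $\CU_1$ and is therefore also semiclassical.

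The principal obstacle is the construction of \eqref{eq:gen-coh-QR} itself, specifically pinning down how the nonsingularity of $L_{M+N}$ enters the existence argument for $R_{N+m+k}$: the coefficients of $R_{N+m+k}$ satisfy a finite system coming from testing the dual identity against low-degree polynomials, and identifying this system with the matrix $L_{M+N}$ is where the main technical work lies. A secondary, combinatorial hurdle for $m-n>1$ is the bookkeeping of cross-terms in the iterated Leibniz reductions, but this should be routine by induction on $m-n$ once the base case is in place.
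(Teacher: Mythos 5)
The survey does not actually prove this theorem: it states it, remarks that \eqref{eq:gen-coh-QR} extends \eqref{eq:coh-PQ}, and attributes the proof to \cite{JMPP14}. So the only in-paper material to measure your proposal against is the sketched arguments for the classifications of coherent and generalized coherent pairs (the discussion surrounding \eqref{eq:coh-relations} and \eqref{eq:coh-relations2}). Your overall strategy --- first derive the distributional relation \eqref{eq:gen-coh-QR} from the coherence relation via the auxiliary combinations $\sS_k$, $\sT_k$ and orthogonality, then combine instances of it to produce Pearson-type equations --- is the right one and matches the approach of the cited work. But two steps are genuinely incomplete. The first is the existence of $Q_{M+n+k}$ and $R_{N+m+k}$ with the stated degrees, which is the quantitative heart of the theorem and is precisely the part you defer (``where the main technical work lies''). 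The heuristic that nonsingularity of $L_{M+N}$ closes a finite linear system for the coefficients of $R_{N+m+k}$ is plausible, but you never exhibit the system, check that it is square of size $M+N$, or verify that its matrix is $L_{M+N}$ rather than some other array built from the $a_{i,k}$, $b_{i,k}$; since the exact degree of $Q_{M+n+k}$ and the bound on $\deg R_{N+m+k}$ feed into part (2), this cannot be waved through.

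The second gap is in part (2), where your elimination closes only for $m-n=1$. Put $j=m-n$ and use $R\,\partial^{j}V=\sum_{i=0}^{j}(-1)^{i}\binom{j}{i}\partial^{j-i}\bigl(R^{(i)}V\bigr)$. Then $R_2\partial^{j}(Q_1\CU_1)-R_1\partial^{j}(Q_2\CU_1)=0$ is a single distributional equation of order $j$ in $\CU_1$, of the form $\sum_{i=0}^{j}\partial^{i}(E_i\CU_1)=0$, not a Pearson equation; one such equation does not reduce ``one derivative order at a time,'' and extracting a first-order equation from it requires either a nontrivial lemma or further instances $k\ge 2$ of \eqref{eq:gen-coh-QR}. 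Likewise $Q_2\partial^{j}(Q_1\CU_1)-Q_1\partial^{j}(Q_2\CU_1)=\sum_{i\ge 1}(-1)^{i}\binom{j}{i}\partial^{j-i}\bigl((Q_2^{(i)}Q_1-Q_1^{(i)}Q_2)\CU_1\bigr)=(Q_2R_1-Q_1R_2)\CU_0$ is algebraic only when $j=1$, so your claimed relation $A\CU_1=B\CU_0$ for $\CU_0$ is not established for $j\ge 2$. Finally, you never rule out the degenerate situations where the determinants $R_2Q_1-R_1Q_2$ or $Q_2'Q_1-Q_1'Q_2$ vanish identically; in the paper's sketches of the analogous classifications the entire difficulty lies in the case analysis of the zeros of exactly such a determinant, and the conclusion (which functional is semiclassical) changes from case to case. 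As written, your argument establishes part (2) only for $m-n=1$ and under unstated nondegeneracy assumptions; part (1), resting on the stability of semiclassical character under polynomial--rational modification, is believable but needs the same care.
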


The equation \eqref{eq:gen-coh-QR} is an extension of \eqref{eq:coh-PQ}. This theorem was established in
\cite{JMPP14} and it extends the results in \cite{JP08, JP13,MP12}.

The connection between these extended coherent pairs and the Sobolev orthogonal polynomials is illustrated
in the case of $(M,N)$-coherent pair of order $(m,0)$, which is called $(M,N)$-coherent pair of order $m$.
Assume that $\CU_0$ and $\CU_1$ are positive definite and represented by $d\mu_0$ and $d\mu_1$,
respectively. Fix $m \in \NN$ and define the Sobolev inner product by
$$
  \la f, g \ra_{\lambda} := \int_{\RR} f(x) g(x) d\mu_0 + \lambda \int_{\RR} f^{(m)} (x)  g^{(m)} (x) d\mu_1, \quad \lambda > 0.
$$
Let $\{S_n(\cdot; \lambda)\}$ denote the corresponding sequence of monic orthogonal polynomials.

\begin{thm}
Let $\{\mu_0,\mu_1\}$ be a $(M,N)$-coherent pair of order $m$. If $n < m,$ then $S_n(x;\lambda) = P_n(x; d\mu_1)$
and, for $n\geq0,$
$$
   \frac{P_{n+m}(x;d\mu_0)}{(n+1)_m} + \sum_{i=1}^M a_{i,n} \frac{P_{n-i+m}(x;d\mu_0)}{(n-i+1)_m}
    =  \frac{S_{n+m}(x;\lambda)}{(n+1)_m}+ \sum_{j=n+1}^{\max\{M,N\}} c_{j,n}(\lambda) \frac{S_{n-j+m}(x; \lambda)}{(n-j+1)_m},
$$
where $c_{j,n}(\lambda)$ are constants that can be determined recursively.
\end{thm}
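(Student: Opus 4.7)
The first claim is immediate from the structure of the Sobolev inner product: if $n<m$ then $f^{(m)}\equiv 0$ for every $f\in\Pi_n$, so on $\Pi_n\times\Pi_n$ the form $\la\cdot,\cdot\ra_\lambda$ collapses to $\la\cdot,\cdot\ra_{d\mu_0}$; consequently the corresponding monic Sobolev orthogonal polynomial of degree $n$ must agree with the monic $\mu_0$-orthogonal polynomial of that degree.

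For the main identity, the plan is to introduce
$$
T_n(x) := \frac{P_{n+m}(x;d\mu_0)}{(n+1)_m}+\sum_{i=1}^M a_{i,n}\,\frac{P_{n-i+m}(x;d\mu_0)}{(n-i+1)_m},
$$
a polynomial of degree exactly $n+m$ with leading coefficient $1/(n+1)_m$. Differentiating $m$ times and invoking the defining identity of an $(M,N)$-coherent pair of order $m$ (i.e.~the case $(m,n)=(m,0)$ of the definition at index $k=n$), one obtains
$$
T_n^{(m)}(x) = P_n(x;d\mu_1)+\sum_{i=1}^N b_{i,n}\,P_{n-i}(x;d\mu_1).
$$

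The crux of the proof is to show that $\la T_n, q\ra_\lambda=0$ for every $q\in\Pi_{n+m-\max\{M,N\}-1}$. The $\mu_0$-piece $\la T_n,q\ra_{d\mu_0}$ vanishes because $T_n$ lies in the $\mu_0$-orthogonal span of $P_{n+m-M}(\cdot;d\mu_0),\ldots,P_{n+m}(\cdot;d\mu_0)$ while $\deg q\le n+m-M-1$. The $\mu_1$-piece $\la T_n^{(m)},q^{(m)}\ra_{d\mu_1}$ vanishes because $T_n^{(m)}$ lies in the $\mu_1$-orthogonal span of $P_{n-N}(\cdot;d\mu_1),\ldots,P_n(\cdot;d\mu_1)$ while $\deg q^{(m)}\le n-N-1$. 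Adding the two pieces yields the claim.

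Expanding $T_n$ in the monic Sobolev basis $\{S_k(\cdot;\lambda)\}_{k\ge 0}$ and applying the orthogonality just proved, every coefficient of $S_0,\ldots,S_{n+m-\max\{M,N\}-1}$ vanishes. Matching leading coefficients pins down the $S_{n+m}$-coefficient as $1/(n+1)_m$, so
$$
T_n(x) = \frac{S_{n+m}(x;\lambda)}{(n+1)_m}+\sum_{j=1}^{\max\{M,N\}}c_{j,n}(\lambda)\,\frac{S_{n+m-j}(x;\lambda)}{(n-j+1)_m}.
$$
The remaining constants are determined inductively: taking $\la\,\cdot\,,S_{n+m-j}(\cdot;\lambda)\ra_\lambda$ of both sides isolates $c_{j,n}(\lambda)$ in terms of Sobolev inner products involving $T_n$ and of $c_{i,n}(\lambda)$ for $i<j$, while positive-definiteness of $\la\cdot,\cdot\ra_\lambda$ keeps the denominators $\|S_{n+m-j}(\cdot;\lambda)\|_\lambda^2$ non-zero. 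The principal technical nuisance is the Pochhammer bookkeeping together with the edge cases $n<M$ or $n<N$, both of which are absorbed by the definitional convention $a_{i,k}=b_{i,k}=0$ for $i>k$.
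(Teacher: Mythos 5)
The survey does not actually prove this theorem --- it only cites \cite{JMPP14} --- so your argument must stand on its own, and in substance it does: it is the same technique the paper itself uses in Section 5 for the generalized (i.e.\ $(1,1)$-)coherent case, namely form the prescribed combination $T_n$ of the $P_k(\cdot;d\mu_0)$, verify that both pieces of $\la T_n,q\ra_\lambda$ vanish for $q$ of sufficiently low degree (the $d\mu_0$-piece by quasi-orthogonality of $T_n$, the $d\mu_1$-piece by applying the coherence relation to $T_n^{(m)}$), and then expand $T_n$ in the monic Sobolev basis and match leading coefficients. Three remarks. First, for $n<m$ your argument yields $S_n(\cdot;\lambda)=P_n(\cdot;d\mu_0)$, not $P_n(\cdot;d\mu_1)$ as printed; since $q^{(m)}=0$ for every $q\in\Pi_{n-1}$ the measure $d\mu_1$ cannot enter, so the statement contains a misprint and your conclusion is the correct one --- but you should say so explicitly rather than silently prove something different from what is asserted. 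Second, your summation range $1\le j\le\max\{M,N\}$ likewise corrects the printed range $n+1\le j\le\max\{M,N\}$, which would make the sum empty for all $n\ge\max\{M,N\}$. Third, the one genuine loose end: for $j>n$ the normalizing factor $(n-j+1)_m$ vanishes, and the convention $a_{i,k}=b_{i,k}=0$ for $i>k$ disposes of such terms on the $P$-side but not on the $S$-side, where your orthogonality argument does not force the coefficient of $S_{n+m-j}(\cdot;\lambda)$ to vanish when $n<j\le\max\{M,N\}$; those finitely many terms must be written without the Pochhammer normalization (or the constants suitably redefined), which is cosmetic but deserves a sentence.
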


This theorem was also established in \cite{JMPP14}, where an algorithm is given for computing $c_{j,n} (\lambda)$ and
the Fourier coefficients of the orthogonal expansion in $S_n(\cdot;\lambda)$.

As in the case of coherent or generalized coherent pairs, one can consider the problem of identifying all
coherent pairs.  The problem, however, is open in all cases beyond the two cases that we have discussed.
At the time of this writing, the $(M,N)$-coherent pairs of order $(m,n)$ is still under study.

\section{Classical orthogonal polynomials as Sobolev orthogonal polynomials}
\setcounter{equation}{0}

The classical orthogonal polynomials are Sobolev orthogonal polynomials themselves, since derivatives of a
classical orthogonal polynomial are classical orthogonal polynomials of the same type. Indeed, for the Hermite
polynomials $H_n$, $H_n'(x) = 2 n H_{n-1}(x)$, so that $\{H_n\}_{n\ge 0}$ is also the sequence of orthogonal
polynomials with respect to the Sobolev inner product
\begin{equation}\label{HermiteSOPm}
 \la f,g\ra_\CH := \sum_{k=0}^m \int_{\RR} f^{(k)}(x)g^{(k)}(x) e^{-x^2} dx, \qquad m =1,2,\ldots,
\end{equation}
where $\lambda_0 > 0$ and $\lambda_k \ge 0$ for $k =1,\ldots, m$, which we assume for the two cases below as well.
For the Laguerre polynomials $L_n^{(\a)}$, $\frac{d}{dx} L_n^{(\a)}(x) = - L_{n-1}^{(\a+1)}(x)$, so that
$\{L_n^{(\a)}\}_{n\ge 0}$ is also the sequence of orthogonal polynomials with respect to the Sobolev
inner product
\begin{equation}\label{LaguerreSOPm}
 \la f,g\ra_{\CL^\a} := \sum_{k=0}^m  \lambda_k  b^{(\a+k)}  \int_0^\infty f^{(k)}(x)g^{(k)}(x)  x^{\a+k} e^{-x} dx, \quad m = 1,2,\ldots,
\end{equation}
where $b^{(\a)} :=1/\Gamma(\a+1)$ and $\a > -1$. For the Jacobi polynomials $P_n^{(\a,\beta)}$, it is known that
$\frac{d}{dx} P_n^{(\a,\beta)}(x) =
\frac{n+\a+\beta+1}{2}P_{n-1}^{(\a+1,\beta+1)}(x)$, so that $\{P_n^{(\a,\beta)}\}_{n\ge 0}$ is also the sequence of the orthogonal
polynomials with respect to the Sobolev inner product
\begin{equation}\label{JacobiSOPm}
 \la f,g\ra_{\CJ^{\a,\beta}}:= \sum_{k=0}^m  \lambda_k b^{(\a+k,\beta+k)}  \int_{-1}^1 f^{(k)}(x)g^{(k)}(x) (1-x)^{\a+k}(1+x)^{\beta+k} dx,
\end{equation}
where $m=1,2,\ldots$ and $b^{(\a,\beta)} := \Gamma(\a+\beta+2)/(2^{\a+b+1} \Gamma(\a+1)\Gamma(\beta+1))$, and $\a,\beta > -1$.

For arbitrary $\a \in \RR$, the Laguerre polynomials can be defined by the explicit formula
(\cite[(5.1.6)]{Szego})
$$
   L_n^{(\a)} (x) =  \sum_{\nu =0}^n  \frac{(\a + \nu +1)_{n-\nu} }{(n-\nu)!} \frac{(-x)^\nu}{\nu!}, \qquad n \ge 0,
$$
which turns out to be orthogonal with respect to a Sobolev inner product. For $k \in \NN$, define a
matrix $M(k)$ by
$$
   M(k) = \left[ m_{i,j}(k) \right ]_{i,j=0}^k := Q(k) Q(k)^T, \qquad Q(k) := \left[ (-1)^{j-i} \binom{k-i}{j-i} \right]_{i,j=0}^k,
$$
where we assume that $\binom{a}{b} =0$ if $b < 0$. Then $M(k)$ is positive definite.

\begin{thm} \label{LagSOPgeneral}
For $\a \in \RR$, let $k := \max\{0, \lfloor \a \rfloor\}$. Then the sequence of Laguerre polynomials
$\{L_n^\a\}_{n \ge 0}$ is orthogonal with respect to the inner product
$$
  \la f, g \ra_{\CL^\a} := \int_{0}^\infty \left(f(x), f'(x), \ldots, f^{(k)}(x)\right) M(k) \left(g(x), g'(x), \ldots, g^{(k)}(x)\right)^T x^{\a+k} e^{-x} dx.
$$
Moreover, if $- \a \in \NN$, then $k = - \a$ and one can write, by integration by parts,
\begin{align*}
  \la f, g\ra_{\CL^{-k}}   = & \frac 12 \sum_{i=0}^{k-1} \sum_{j=0}^i m_{i,j}(k) \left [ f^{(i)}(0)g^{(j)}(0) +
         f^{(j)}(0)g^{(i)}(0) \right] + \int_0^\infty f^{(k)}(x) g^{(k)}(x) e^{-x} dx.
\end{align*}
\end{thm}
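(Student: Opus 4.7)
The plan is to exploit the factorization $M(k) = Q(k) Q(k)^T$ to decompose the quadratic form as a sum of squares, identify each square as a shifted Laguerre polynomial, and then invoke classical Laguerre orthogonality. Positive-definiteness of $M(k)$ is immediate: $Q(k)$ is upper triangular with $1$'s on the diagonal, hence invertible, so $M(k)$ has trivial kernel and $\la\cdot,\cdot\ra_{\CL^\a}$ is a genuine inner product on $\Pi$.

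The integrand factors as $\vec f^{\,T} M(k) \vec g = (Q(k)^T \vec f)^T (Q(k)^T \vec g) = \sum_{\ell=0}^k u_\ell(f)(x)\, u_\ell(g)(x)$, where $u_\ell(f)(x) = \sum_{i=0}^\ell (-1)^{\ell-i}\binom{k-i}{\ell-i} f^{(i)}(x)$. A short binomial calculation identifies $u_\ell$ with the differential operator $(I-\partial)^{k-\ell}\partial^\ell$. The central claim is then the identity $u_\ell(L_n^{(\a)}) = (-1)^\ell L_{n-\ell}^{(\a+k)}$ for $0 \le \ell \le k$. This combines the standard derivative relation $(L_n^{(\gamma)})' = -L_{n-1}^{(\gamma+1)}$ with the shift identity $(I-\partial)L_n^{(\gamma)} = L_n^{(\gamma+1)}$, the latter being an immediate consequence of $L_n^{(\gamma)} = L_n^{(\gamma+1)} - L_{n-1}^{(\gamma+1)}$; iterating the shift $k-\ell$ times gives the claim. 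The orthogonality $\la L_n^{(\a)}, L_m^{(\a)}\ra_{\CL^\a} = 0$ for $n \ne m$ then reduces to the classical orthogonality of $\{L_j^{(\a+k)}\}$ with respect to $x^{\a+k} e^{-x}$, which is valid because $k$ is chosen so that $\a + k > -1$.

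For the second statement, when $\a = -k$ the weight reduces to $e^{-x}$. The operator identity $(I-\partial)^s h(x) \cdot e^{-x} = (-1)^s (h(x) e^{-x})^{(s)}$ allows the $(I-\partial)^{k-\ell}$ factors in each $u_\ell$ to be transferred onto the exponential. Integrating $\int_0^\infty u_\ell(f) u_\ell(g)\, e^{-x} dx$ by parts $k-\ell$ times, the boundary contributions at $\infty$ vanish by exponential decay and those at $0$ produce a linear combination of $f^{(i)}(0) g^{(j)}(0)$ with $0 \le i,j \le k-1$. The dual identity $(I+\partial)^{k-\ell}(g^{(\ell)} e^{-x}) = g^{(k)} e^{-x}$ then collapses the surviving bulk integral into $\int_0^\infty f^{(k)}(x) g^{(k)}(x)\, e^{-x} dx$, and symmetrization of the accumulated boundary coefficients over $\ell$ assembles the asserted form $\tfrac12 m_{i,j}(k)[f^{(i)}(0) g^{(j)}(0) + f^{(j)}(0) g^{(i)}(0)]$.

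The main obstacle is the integration-by-parts bookkeeping in the second part, specifically confirming that after summing the contributions over all $\ell$ the boundary coefficients reassemble precisely into the matrix entries $m_{i,j}(k)$. The orthogonality statement, by contrast, is essentially algebraic once the identity $u_\ell(L_n^{(\a)}) = (-1)^\ell L_{n-\ell}^{(\a+k)}$ is secured.
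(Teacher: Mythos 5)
Your overall strategy --- factor the matrix, turn the quadratic form into a sum of squares $\sum_{\ell} u_\ell(f)u_\ell(g)$, show each $u_\ell$ maps $L_n^{(\a)}$ to $\pm L_{n-\ell}^{(\a+k)}$, and finish with classical orthogonality for the weight $x^{\a+k}e^{-x}$ --- is exactly the argument of P\'erez and Pi\~nar that the paper cites for this theorem (the survey gives no proof beyond the citation, apart from a short direct argument for $-\a\in\NN$ via \eqref{eq:Laguerre-k}). But the pivotal step of your proof is false as written. The ``short binomial calculation'' does not identify $u_\ell$ with $(I-\partial)^{k-\ell}\partial^\ell$: from $M(k)=Q(k)Q(k)^T$ you correctly extract $u_\ell(f)=\sum_{i=0}^{\ell}(-1)^{\ell-i}\binom{k-i}{\ell-i}f^{(i)}$, which is built from the \emph{columns} of $Q(k)$, whereas
$$
(I-\partial)^{k-\ell}\partial^{\ell}\;=\;\sum_{i=\ell}^{k}(-1)^{i-\ell}\binom{k-\ell}{i-\ell}\,\partial^{i}
$$
is built from its \emph{rows}. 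Already at $\ell=0$ the two disagree: $u_0(f)=f$, so your central identity would force $L_n^{(\a)}=L_n^{(\a+k)}$, false for every $k\ge1$. The discrepancy is not cosmetic: for $k=1$ one has $Q(1)Q(1)^T=\left[\begin{smallmatrix}2&-1\\-1&1\end{smallmatrix}\right]$, and with $\a=-1$, $f=L_1^{(-1)}=-x$, $g=L_0^{(-1)}=1$ a direct computation gives $\int_0^\infty(2fg-fg'-f'g+f'g')e^{-x}\,dx=\int_0^\infty(1-2x)e^{-x}\,dx=-1\neq0$. So the sum of squares you actually produced does not annihilate distinct Laguerre polynomials.

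The repair is to diagonalize with the rows of $Q(k)$, i.e.\ to use the factorization $M(k)=Q(k)^TQ(k)$ (equivalently, replace $Q(k)$ by its transpose; the printed $M(k)=Q(k)Q(k)^T$ is evidently a typo in the survey, as the $k=1$ computation shows). Setting $v_\ell(f):=(Q(k)\vec f\,)_\ell=(I-\partial)^{k-\ell}\partial^{\ell}f$, your two Laguerre identities do give $v_\ell(L_n^{(\a)})=(-1)^{\ell}L_{n-\ell}^{(\a+k)}$ exactly as intended, and $\vec f^{\,T}Q(k)^TQ(k)\,\vec g=\sum_{\ell}v_\ell(f)v_\ell(g)$ then yields the orthogonality. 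The same transposition must be carried through the integration-by-parts bookkeeping in the second half; note, though, that for the boundary form it is much quicker to argue as the paper does, using \eqref{eq:Laguerre-k} to see that $L_n^{(-k)}$ and its first $k-1$ derivatives vanish at $0$, so that the inner product of two such polynomials collapses to $\int_0^\infty f^{(k)}g^{(k)}e^{-x}dx$. Finally, the statement's $k=\max\{0,\lfloor\a\rfloor\}$ must be read, as you implicitly do, as the smallest nonnegative integer with $\a+k>-1$.
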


This theorem was proved in \cite{PP96}. The case $- a \in \NN$ was proved in \cite{KL95}. In the case of
$-a \in \NN$, the result can be deduced directly with the help of the relation
\begin{equation}\label{eq:Laguerre-k}
   L_n^{(-k)}(x) =  (-x)^k \frac{(n-k)!}{n!} L_{n-k}^{(k)}(x), \quad n \ge k,
\end{equation}
which shows, in particular, that $L_n^{(-k)}$ and its up to $(k-1)$-th derivatives vanish at $x =0$, so that the
orthogonality follows immediately from $\frac{d}{d x} L_n^{(\a)}(x) = - L_{n-1}^{(\a+1)}(x)$.

In the Jacobi case, it is known that if $-\a, - \beta, - \a-\beta \in \RR \setminus \NN$, then the Jacobi polynomials are
orthogonal with respect to some Sobolev inner product of the form
$$
      \la f, g\ra = \int_{\RR} f(x) g(x) d\mu_0(x) + \int_{\RR} f'(x) g'(x) d\mu_1(x),
$$
where $\mu_0$ and $\mu_1$ are real, possibly signed, Borel measure on $\RR$. In the case of $\a = - k$,
$k \in \NN$, and $- \beta -k \in \RR \setminus \NN$, more can be said since
\begin{equation}\label{eq:Jacobi-k}
  \binom{n}{k} P_n^{(-k,\beta)}(x) = \binom{n+\beta}{k} \left( \frac{x-1}{2} \right)^k P_{n-k}^{(k,\beta)}(x),
\end{equation}
and an analogous expression for $P_n^{(\a,-k)}$ follows from $P_n^{(\a,\beta)}(x) = (-1)^n P_n^{(\beta,\a)}(-x)$.
It follows that the $P_n^{(-k,\beta)}$ and its up to $(k-1)$th derivatives vanish at $x =1$. Furthermore,
$\frac{d^k}{dx^k} P_n^{(-k,\beta)}(x) = c P_{n-k}^{(0,\beta+k)}(x)$. These relations can be used to derive a
Sobolev orthogonality of the Jacobi polynomials. Let $\Lambda(k) = \mathrm{diag} \{\lambda_1,\ldots,\lambda_k\}$ with $\lambda_j > 0$.
Define
$$
   M(k) = Q(k)^{-1} \Lambda(k) (Q(k)^{-1})^T, \qquad Q(k) = \left[ \left(\partial^j P_i^{(-k,\beta)}\right)(1)\right]_{i,j=0}^{k-1}
$$

\begin{thm}
For $k \in \NN$ and $\beta \in \RR$ such that $ -(k+ \beta) \notin \NN$,  the sequence of Jacobi polynomials $\{P_n^{(-k,\beta)}\}_{n \ge 0}$ is orthogonal with
respect to the inner product
\begin{align*}
  \la f, g\ra_{\CJ^{-k,\beta}} := & \int_{-1}^1 f^{(k)}(x) g^{(k)}(x) (1+x)^{k+\beta} dx \\
     &+ \left(f(1), f'(1), \ldots, f^{(k-1)}(1)\right) M(k) \left(g(1), g'(1), \ldots, g^{(k)}(1)\right)^T.
\end{align*}
\end{thm}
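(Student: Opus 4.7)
The plan is to verify $\la P_n^{(-k,\b)}, P_m^{(-k,\b)}\ra_{\CJ^{-k,\b}} = 0$ for $n\ne m$ by partitioning into three cases according to whether $n$ and $m$ lie above or below the threshold $k$, exploiting the factorization \eqref{eq:Jacobi-k} together with iterated Jacobi differentiation. As a preliminary, I would first observe that $Q(k)$ is nonsingular: $\{P_i^{(-k,\b)}\}_{i=0}^{k-1}$ is a basis of $\Pi_{k-1}$, and the evaluation map $f\mapsto (f(1),f'(1),\ldots,f^{(k-1)}(1))$ is a linear bijection from $\Pi_{k-1}$ onto $\RR^k$ (any polynomial of degree less than $k$ vanishing to order $k$ at $x=1$ must be identically zero). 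Hence $M(k)=Q(k)^{-1}\Lambda(k)(Q(k)^{-1})^T$ is well defined, symmetric, and positive definite.

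The two core facts I would use, both valid for $n\ge k$, are: (a) by \eqref{eq:Jacobi-k}, $P_n^{(-k,\b)}(x)$ contains the factor $(x-1)^k$, so $(\partial^j P_n^{(-k,\b)})(1)=0$ for $0\le j\le k-1$; and (b) iterating $\partial P_r^{(\a,\b)}=\tfrac{r+\a+\b+1}{2}P_{r-1}^{(\a+1,\b+1)}$ yields
\[
\partial^k P_n^{(-k,\b)}(x) = d_n\,P_{n-k}^{(0,\b+k)}(x),
\]
where $d_n$ is an explicit product of $k$ linear factors in $n$ and $\b$, nonzero under the standing hypothesis $-(k+\b)\notin\NN$.

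With these in hand the case analysis is direct. If $n,m\ge k$, then (a) kills the discrete part of $\la\cdot,\cdot\ra_{\CJ^{-k,\b}}$ and (b) reduces the integral to $d_nd_m\int_{-1}^1 P_{n-k}^{(0,\b+k)}(x)P_{m-k}^{(0,\b+k)}(x)(1+x)^{\b+k}\,dx$, which is a standard Jacobi inner product with parameters $(0,\b+k)$ and hence vanishes for $n\ne m$. If $n\ge k>m$ (or vice versa by symmetry of the bilinear form), the lower-degree factor has $g^{(k)}\equiv 0$, killing the integral, while (a) kills the boundary term. If $n,m<k$, the integral is trivially zero, and denoting the $i$-th row of $Q(k)$ by $\mathbf{p}_i$, the discrete part is
\[
\mathbf{p}_n M(k)\mathbf{p}_m^T \;=\; \bigl(Q(k) M(k) Q(k)^T\bigr)_{n,m} \;=\; \bigl(\Lambda(k)\bigr)_{n,m} \;=\; \lambda_{n+1}\,\d_{n,m}
\]
by the very definition of $M(k)$. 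Combining the three cases gives the orthogonality, while the strict positivity of $d_n^2\|P_{n-k}^{(0,\b+k)}\|^2$ and of the $\lambda_j$ shows that the diagonal values are positive, so that $\la\cdot,\cdot\ra_{\CJ^{-k,\b}}$ is an honest inner product.

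The main obstacle is organizational rather than deep: juggling the three cases cleanly and aligning the row/column indices of $Q(k)$ with the derivative indices at $x=1$. The only genuine computation is the explicit form of $d_n$, a straightforward iteration of the Jacobi differentiation rule; once this and the invertibility of $Q(k)$ are in place, each case collapses either to a classical Jacobi orthogonality or to a triviality.
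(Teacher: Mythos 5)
Your argument is correct and is essentially the proof the paper has in mind: the survey only sketches it (deferring details to \cite{ARPP}), but the two facts it isolates --- the factor $(x-1)^k$ in $P_n^{(-k,\beta)}$ for $n\ge k$ coming from \eqref{eq:Jacobi-k}, hence the vanishing of the first $k$ Taylor coefficients at $x=1$, and the reduction $\partial^k P_n^{(-k,\beta)}=d_n P_{n-k}^{(0,\beta+k)}$ --- combined with the definition $M(k)=Q(k)^{-1}\Lambda(k)(Q(k)^{-1})^T$ are exactly the ingredients of your three-case verification. The one caveat is that the nonvanishing of $d_n$ and the nonsingularity of $Q(k)$ (equivalently, $\deg P_i^{(-k,\beta)}=i$ for $i<k$) require $\beta$ to avoid certain integer values that the stated hypothesis $-(k+\beta)\notin\NN$ does not exclude (e.g.\ $\beta=-l$ with $1\le l\le k$, where the paper itself notes degeneration of $P_n^{(-k,-l)}$); this imprecision sits in the theorem as quoted rather than in your argument.
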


An analogue result can be stated for $\a \in \RR$ and $-(\a+k) \notin \NN$, where the point evaluations need
to be at $-1$. This theorem was proved in \cite{ARPP}, which contains an analogue result for the Laguerre
polynomials $L_n^{(-k)}$ that is more general than the corresponding result in Theorem \ref{LagSOPgeneral}.
These results extend several special cases appeared early in the literature (see, for example, \cite{KL98}).

The similar results also hold for the Jacobi polynomials with $\a = -k$, $\beta=-l$ with $k,l \in \NN$, where the
orthogonality is defined with respect to the bilinear form
$$
   \la f, g\ra = \int_{-1}^1 f^{(k+l)}(x) g^{(k+l)}(x) (1-x)^l (1+x)^{k} dx + \la f, g \ra_D,
$$
where $\la f, g\ra_D$ is defined via point evaluations of $f$ and $g$ and their derivatives up to $(k-1)$-th
order at $1$ and up to $(l-1)$-th order at $-1$ (\cite{AAR02, APP98}). In this case, the Jacobi polynomial
$P_n^{(-k,-l)}$ vanishes identically for $\max \{k,l\} \le n < k+l$ and has reduced degree if $(k+l)/2 \le n < \max\{k,l\}$,
so that it is necessary to define the Jacobi polynomials with such indexes. There are several ways to redefine these
polynomials. Let $\wh P_n^{(\a,\beta)}$ denote the usual monic Jacobi polynomial of degree $n$. In \cite{AAR02},
the monic Jacobi polynomials $\wh P_n^{(\-k,-l)}$ are defined by
$$
  \wh P_n^{(\-k,-l)}(x) = \frac12 \left[ \lim_{\a \to -k} P_m^{(\a, -l)}(x)+\lim_{\beta \to -l} P_m^{(-k, \beta)}(x)\right],
$$
where $m = k + l -n -1$ if $(k+l)/2 \le n < \max\{k,l\}$ and $m =n$ otherwise.

It is worth to mention that the Jacobi polynomials with negative indexes have been used extensively in the spectral
theory for solving differential equations. See, for example, \cite{GSW09, SWL, LX13} and the references therein.
In the spectral theory, orthogonal expansions and approximation by polynomials in Sobolev spaces are used, so that
Sobolev orthogonal polynomials are needed, although they often appear implicitly.

\section{Sobolev orthogonal polynomials of the second type}
\setcounter{equation}{0}

As stated in the introduction, an inner product is called a Sobolev inner product of the second type if the derivatives
appear only on function evaluations on a finite discrete set. More precisely, such an inner product takes the form
\begin{equation}\label{SobolevtypeOPm}
 \la f,g\ra_S :=  \int_{\RR} f(x) g(x) d\mu_0 + \sum_{k=1}^m \int_{\RR} f^{(k)}(x)g^{(k)}(x) d\mu_{k},
\end{equation}
where $d\mu_{0}$ is a positive Boreal measure supported on an infinite subset of the real line and $d\mu_{k}$,
$k=1, 2, ..., m$, are positive Borel measures supported on finite subsets of the real line. Let $\delta_c$ denote
the delta measure supported at the point $c \in \RR$, that is, $\delta_c f(x) = f(c)$. In most cases considered
below, $d\mu_k = A_k \delta_c$ or $d\mu_k = A_k \delta_a + B_k \delta_b$, where $A_k$ and $B_k$ are nonnegative
numbers.

Orthogonal polynomials for such an inner product are called Sobolev orthogonal polynomials of discrete type. The
first study was carried out for the classical weight functions. The Laguerre case was studied in \cite{Koek90}
with $d\mu_{0}= x^{\alpha} e^{-x} dx$, $\a > -1$, and $d\mu_{k}= M_{k} \delta_{0}, k=1, 2, ..., m$; the $n$-th
Sobolev orthogonal polynomial, $S_n$, is given by
$$
     S_n(x) = \sum_{k=0}^{\min\{n, m+1\}} (-1)^k A_k L_{n-k}^{\a+k}(x),
$$
in which $A_k$ are constants determined by a linear system of equations. The Gegenbauer case was
studied in \cite{BavMei89,BavMei} with $d\mu_{0}= (1- x^{2})^{\lambda- 1/2} dx + A (\delta_{-1} + \delta_{1})$, $\lambda > -1/2$,
and $m=1$, $d\mu_{1}= B ( \delta_{-1} + \delta_{1})$;  the $n$-th Sobolev orthogonal polynomial is given
by
$$
  S_n(x) =  \sum_{k=0}^{2} a_{k,n} x^{k} C_{n-k}^{\lambda+k}(x)
$$
where $a_{0,n}$, $a_{1,n}$, $a_{2,n}$ are appropriate constants. In both cases, the Sobolev orthogonal polynomials satisfy
higher (than three) order recurrence relations that expands $q(x) S_n(x)$ as a sum of $S_m$. For the Laguerre
case, $q(x) = x^{m+1}$ and the order is  $2m+3$; for the Gegenbauer case, either $q(x) = (1-x^2)^2$ and the order is 9
or $q(x) = x^3-3x$ and the order is 7. Furthermore, in both cases, the polynomial $S_n$ satisfies a second order linear differential equation with polynomial coefficients, whose degree does not depend on $n$.

When $M_{k}=0$,  $k=1, 2, ..., m-1$, and $d\mu_{m}= M_{m} \delta_{c}$, the inner product \eqref{SobolevtypeOPm}
becomes
\begin{equation}\label{SobolevtypeOPm2}
 \la f,g\ra_m :=  \int_{\RR} f(x) g(x) d\mu_0 +  M_m f^{(m)}(c)g^{(m)}(c),
\end{equation}
where $c\in\RR$ and $M_m \ge 0$. This case was studied in \cite{MarRo}. Let $\{P_{n}\}_{n\geq0}$ and
$\{S_{n}\}_{n\geq0}$ denote the sequences of monic orthogonal polynomials with respect to $d\mu_{0}$ and
$\la \cdot, \cdot \ra_m$, respectively. For $i,j \in \NN_0$, define
$$
K_{n-1}^{(i,j)}(x,y):=  \sum_{l=0}^{n-1} \frac {P_{l}^{(i)}(x) P_{l}^{(j)}(y)}{||P_{l}||_{d\mu_{0}}^{2}}.
$$
Notice that $K_{n-1}^{(0,0)}$ is the reproducing kernel of the $(n-1)$-th partial sum of orthogonal expansion with
respect to $d\mu_0$. It was shown in \cite{MarRo} that
\begin{equation}\label{SobolevtypeOPeq}
 S_{n}(x) = P_{n} (x) - \frac { M_{m} P^{(m)}_{n} (c)}{ 1 + M_{m} K_{n-1} ^{(m,m)}(c,c)} K_{n-1}^{(0,m)}(x,c),
\end{equation}
which extends the expression for $m=0$ in \cite{Krall}. From this relation, one deduces immediately that
$$
  S_{n+1}(x)+ a_{n} S_{n}(x) = P_{n+1} (x) + b_{n}P_{n} (x), \qquad n \ge 0,
$$
where $a_n$ and $b_n$ are constants that can be easily determined. This shows a structure  similar to
\eqref{eq:SOP-OP-A1} that one derives for the Sobolev orthogonal polynomials in the case of coherent pair.
The Sobolev polynomials $S_n$ also satisfy a higher order recurrence relation
\begin{equation}\label{SobolevtypeRR}
      (x-c)^{m+1} S_{n}(x) = \sum_{j= n- m-1}^{n+m+1} c_{n,j} S_{j}(x),
\end{equation}
where $c_{n, n+m+1}=1$ and $c_{n, n-m-1} \neq 0$.

Orthogonality of a sequence of polynomials is determined by the three term recurrence relations that it satisfies,
the precise statement of which is known as the Favard theorem. One may ask if there is a similar theorem for
polynomials satisfying higher order recurrence relations. There are two types of results in this direction, both
related to Sobolev orthogonal polynomials.

The first one gives a characterization of an inner product $\la \cdot, \cdot \ra$ for which orthogonal polynomials
satisfy the recurrence relation of the form \eqref{SobolevtypeRR}, which holds if the operator of multiplication by
$M_{m,c}: = (\cdot - c)^{m+1}$ is symmetric, {\it i.e.}, $\la M_{m,c} p, q\ra = \la p, M_{m,c} q\ra$.
It was proved in \cite{Du} that if $\la \cdot, \cdot \ra$ is an inner product such that $M_{m,c}$ is symmetric and
it commutes with the operator $M_{0,c}$, {\it i.e.}, $\la M_{m,c} p, M_{0,c} q\ra = \la M_{0,c} p, M_{m,c} q\ra$,
then there exists a nontrivial positive Borel measure $d\mu_{0}$ and a real, positive semi-definite matrix $A$ of
size $m+1$, such that the inner product is of the form
\begin{align}\label{Sobolevtypeform}
\la p, q \ra = & \int_{\RR} p(x)q (x) d\mu_{0}  \\
    & + \left(p(c), p'(c), \ldots, p^{(m)}(c)\right) A \left(q(c), q'(c), \ldots, q^{(m)}(c)\right)^T. \notag
\end{align}
Necessary and sufficient conditions for $A$ being diagonal were also given in \cite{Du}. Furthermore,
a connection between such Sobolev orthogonal polynomials and matrix orthogonal polynomials was established
in \cite{DuVan}, by representing the higher order recurrence relation as a three term recurrence relation with
matrix coefficients for a family of matrix orthogonal polynomials defined in terms of the Sobolev
orthogonal polynomials.

The second type of Favard type theorem was given in \cite{EvLiMarMarkRon}, where it was proved that
the operator of multiplication by a polynomial $h$ is symmetric with respect to the inner product
\eqref{SobolevtypeOPm} if and only if $d\mu_{k}, k=1, 2,..,m$, are discrete measures whose supports are
related to the zeros of $h$ and their derivatives. Consequently, higher order recurrence relations for
Sobolev inner products appear only in Sobolev inner product of the second type.

For the inner product \eqref{Sobolevtypeform}, a sequence of Sobolev orthogonal polynomials can be constructed
using an extension of the method used to obtain \eqref{SobolevtypeOPm}. A more useful approach is to use
the monic orthogonal polynomials, $\wh{P}_{n}$, with respect to the measure $d \mu^*_{0} = (x-c)^{p} d\mu_{0}$,
where $p = 2\lfloor m/2\rfloor +2$, which leads to  (\cite{EvLiMarMarkRon})
\begin{equation}\label{SobolevtypeOPcon}
 S_{n}(x) =\sum_ {k=0}^{p} d_{n,k} \wh{P}_{n-k} (x), \qquad n\geq p,
\end{equation}
where $d_{n,0}= 1$ and $d_{n,p} \neq 0$.
The above relation means that the sequence $\{S_{n}\}_{n\geq0}$ is quasi-orthogonal with respect to
the positive Borel measure $d{\mu}^*_{0}$, a fact that plays a central role in the analysis of zeros and asymptotics
of these polynomials. If the point $c$ is located outside the support of the measure $d\mu_{0}$, it is enough to consider
$d{\mu}^*_{0}= (x-c)^{m+1} d\mu_{0}$, in which case,  \eqref{SobolevtypeOPcon} is satisfied with $m+1$ instead of $p$ and
the formula has the minimal length. If $c$ belongs to the support of the measure $d\mu_{0}$ and $m$ is an even
integer, then $(x-c)^{m+1} d\mu_{0}$ becomes a signed measure, which does not warrant the existence of orthogonal polynomials. This is the reason behind the definition of $p$ in \eqref{SobolevtypeOPcon}. Let $H$ be the banded matrix
that defines the higher order recurrence relation for $\{S_n\}_{n\ge 0}$. Then $H$ can be derived from the Jacobi matrix
$\wh{J}$ associated with the sequence $\{\wh{P}_{n}\}_{n\geq0}$ of monic orthogonal polynomials. Indeed, consider
the decomposition $(\wh J-cI)^{p}=UL$, where $U$ is an upper triangular matrix and $L$ is a lower triangular matrix
with 1 as its diagonal entries, then $H= LU$ (\cite{DerMar}). This gives a direct connection between Sobolev orthogonal
polynomials of discrete type and the iteration of the canonical Geronimus transformation of the measure
$d{\mu}^* = (\cdot - c) d\mu$.

\section{Differential equations}
\setcounter{equation}{0}

Classical orthogonal polynomials are solutions of a second order linear differential equation of the form
\begin{equation} \label{eq:DEQ}
    \a_2(x) y'' + \a_1(x) y' = - \lambda_n y, \qquad n = 0,1,\ldots,
\end{equation}
where $\a_2$ and $\a_1$ are real-valued functions and $\lambda_n$ are real numbers, called eigenvalues. The equation
is called admissible if $\lambda_n \ne \lambda_m$ whenever $n \ne m$. For the equation \eqref{eq:DEQ} to have a system of polynomial
solutions, it is necessary that $\a_2$ is a polynomial of degree at most $2$ and
$\a_1$ is a polynomial of degree at most 1. For $ n=0,1,\ldots$, the Hermite polynomials $H_n$ satisfy the equation
\begin{equation} \label{HermiteD}
  D_\CH y : = y''- 2 x y' = - 2 n y.
\end{equation}
For $\a > -1$ and $n =0,1,\ldots$, the Laguerre polynomials $L_n^{(\a)}$ satisfy the equation
\begin{equation}\label{LaguerreD}
      D_\CL^\a y: = x y'' + (\a+1 -x) y'  =- n  y.
\end{equation}
For $\a, \beta > -1$ and $n =0,1,\ldots$, the Jacobi polynomials $P_n^{(\a,\beta)}$ satisfy the equation
\begin{equation}\label{JacobiD}
   D_\CJ^{\a,\beta} y:= (1-x^2) y'' - (\a - \beta+ (\a+\beta+2)x ) y' =-  n (n+\a+\beta+1)y.
\end{equation}
The characterization of Bochner \cite{Bochner} shows that these are essentially the only systems
of orthogonal polynomials with respect to a positive definite inner product
$\la f, g \ra_{d\mu} = \int_\RR f(x) g(x) d\mu$ that satisfy \eqref{eq:DEQ}.
For the quasi-definite linear functional, the condition for the Laguerre family can be relaxed to
$- \a \in \RR \setminus  \NN$, and the condition for the Jacobi family can be relaxed to
$- \a, - \beta, -(\a+\beta+1) \in \RR \setminus  \NN$, and, up to a complex change of variables, the only other
system of orthogonal polynomials that satisfies \eqref{eq:DEQ} is the system of Bessel polynomials.

The equation \eqref{eq:DEQ} have solutions that are Sobolev orthogonal polynomials, since classical
orthogonal polynomials are Sobolev orthogonal polynomials themselves. One may ask the question if
there are other system of Sobolev orthogonal polynomials that are solutions of \eqref{eq:DEQ}. In
\cite{KL98}, the system of polynomials that are solutions of admissible \eqref{eq:DEQ} and are orthogonal
with respect to the bilinear form
$$
      \la f, g\ra = \int_{\RR} f(x) g(x) d\mu_0(x) + \int_{\RR} f'(x) g'(x) d\mu_1(x)
$$
is characterized when $d\mu_0$ and $d\mu_1$ are real, possibly signed, Borel measures on the real line.
The classical orthogonal polynomials and those obtained above through limiting process are the only systems
of polynomials in the positive definite case. In the Laguerre case, the limit is
$$
  \lim_{\a \to -1}  \frac{1}{\Gamma(\a+1)} \int_0^\infty f(x) x^\a e^{-x} dx = f(0),
$$
which holds if $f$ is bounded on $(0, \infty)$ and continuous at $x=0$, and it follows that the system of
polynomials $P_0(x)=1$ and $P_n(x): = L_n^{(-1)}(x)$ for $n \ge 1$ is orthogonal with respect to
the Sobolev inner product
$$
  \la f, g\ra_{\CL^{-1}} = \lambda_0 f(0) g(0) + \sum_{k=1}^m \lambda_k b^{k-1} \int_0^\infty f^{(k)}(x) g^{(k)}(x) x^{k-1} e^{-x} dx,
$$
where $\lambda_0, \lambda_1 > 0$ and $\lambda_k \ge 0$ for $2 \le k \le m$, and these polynomials are solutions of the differential
equation $D_{\CL}^{-1} y = - n y$. In the Jacobi case, the limits are
$$
  \lim_{\a \to -1}  (\a +1) \int_{-1}^1 f(x) (1-x)^\a  dx = f(1), \quad
   \lim_{\beta \to -1}  (\beta +1) \int_{-1}^1 f(x) (1+x)^\beta  dx = f(-1),
$$
which hold if $f$ is bounded on $(-1, 1)$ and continuous at $x=1$ or $x =-1$, respectively, and it follows that,
for $\beta > -1$, the system of polynomials $P_0(x) = 1$ and $P_n(x): = P_n^{(-1,\beta)}(x)$ for $n \ge 1$ is orthogonal
with respect to the Sobolev inner product
\begin{align*}
  \la f, g\ra_{\CJ^{-1,\beta}} = &  \lambda_0 f(1)g(1) +  \sum_{k=1}^m \lambda_k b^{(k-1,\beta)} \int_{-1}^1
    f^{(k)}(x) g^{(k)}(x) (1-x)^{k-1}(1+x)^{\beta+k} dx,
\end{align*}
where $\lambda_0, \lambda_1 > 0$ and $\lambda_k \ge 0$ for $2 \le k \le m$, and these polynomials are solutions of the differential
equation $D_{\CJ}^{-1,\beta} y = - n (n+\beta)y$. A similar result holds for $\a > -1$ and $\beta = -1$. The characterization
in \cite{KL98} was carried out for the quasi-definite case, which shows that, up to a complex linear change of variables,
the only systems are those listed above but with parameters being real numbers, except negative integers, and Bessel polynomials.

Essentially, solutions of the second order differential equation \eqref{eq:DEQ} do not lead to new families of Sobolev
orthogonal polynomials; see \cite{ARPP, KL98} for further results.

In \cite{Koek93}, R.Koekoek initiated a search for differential operators whose solutions are the Sobolev-Laguerre
polynomials with respect to the inner product
$$
   \la f, g \ra = \frac{1}{\Gamma(\a+1)} \int_0^\infty f(x)g(x) x^a e^{-x} dx + M f(0)g(0) + N f'(0)g'(0),
$$
where $\a > -1$, $M\ge 0$ and $N \ge 0$. These Sobolev orthogonal polynomials were found in \cite{KM93}
and they satisfy differential equations of the form
\begin{align*}
 M \sum_{i=0}^\infty a_i(x) y^{(i)}(x) +  N \sum_{i=0}^\infty b_i(x) y^{(i)}(x) & + MN  \sum_{i=0}^\infty c_i(x) y^{(i)}(x)\\
    &+ x y''(x) + (\a+1-x)y'(x) = - n y,
\end{align*}
where $a_i(x)$, $b_i(x)$ and $c_i(x)$ are polynomials, independent of $n$, of degree at most $i$. In the case
of $M=0$ and $N > 0$, the differential equation is of order $2 \a + 8$ if $\a$ is a nonnegative integer and of
infinite order otherwise. Furthermore, if $M > 0$ and $N > 0$, then the differential equation is of order $4 \a + 10$
if $\a$ is a nonnegative integer and of infinite order otherwise. For $\a = 0,1,2$, this was established in \cite{Koek93}
and the general case was proved in \cite{KKB98}.

This result has been extended more recently to the Sobolev orthogonal polynomials with respect to the bilinear form
\begin{align*}
  \la f, g\ra & = \int_0^\infty f(x) g(x) x^{\a - m}e^{-x}dx \\
       & +  \left(f(0), f'(0), \ldots, f^{(m-1)}(0)\right)M \left(g(0), g'(0), \ldots, g^{(m-1)}(0)\right)^T,
\end{align*}
where $m =2, 3, \ldots,$ and $M$ is an $m\times m$ matrix. For $m =2$, the differential operator was found
in \cite{Iliev} and for $\a \in \NN$ and $\a \ge m$, the differential operator was constructed in \cite{DI}.

\section{Zeros}
\setcounter{equation}{0}

It is well known that the zeros of orthogonal polynomials with respect to a probability measure supported on the real line
are real, simple, interlace and are located in the interior of the convex hull of the support of the measure. In the Sobolev setting,
some of the above properties are lost. We will specify the behavior of zeros for two types of Sobolev orthogonal polynomials.

\subsection{Sobolev inner products of the second type}
Here we consider the Sobolev inner product defined in \eqref{Sobolevtypeform}, which we state again as
\begin{align} \label{Sobolevtypeform2}
\la p, q \ra = & \int_I p(x)q (x) d\mu_{0}  \\
    & + \left(p(c), p'(c), \ldots, p^{(m)}(c)\right) A \left(q(c), q'(c), \ldots, q^{(m)}(c)\right)^T, \notag
\end{align}
where $I$ is an interval in $\RR$, and we denote by $S_n$ a Sobolev orthogonal polynomial of degree $n$ with
respect to this inner product.

We start with zeros of Sobolev orthogonal polynomials for the inner product
$$
\langle f, g\rangle _{S}= \int_{0}^{a} f(x) g(x) d\mu_{0}(x) + M_{1} f^{(r)}(0) g^{(r)}(0) + M_{2} f^{(s)}(0) g^{(s)} (0), \quad r< s,
$$
studied in \cite{deBr93}, where it was shown that the polynomial $S_{n}$ of degree $n$ has exactly $n-2$ real
and simple zeros in $(0,a)$, and if $s=r+1$, then $S_n$ has a pair of conjugate complex zeros $\alpha \pm i \beta$
with $\alpha <0$ and $\beta \neq 0$, whereas if $s>r+1$, then there is a real number $G$ such that
if $\min \{M_{1}, M_{2}\} > G$, then $S_n$ has either a pair of conjugate complex zeros as above or two real zeros
in $(-\infty, 0)$.

These results illustrate that the behavior of the zeros of Sobolev orthogonal polynomials for the inner
product \eqref{Sobolevtypeform2} is sensitive to small changes in the parameters of the inner product.

Next we describe the results for the simplest case of $m=1$ in \eqref{Sobolevtypeform2}. If
$A = \mathrm{diag} \{A_0,A_1\}$ and $m=1$, the polynomial $S_n$ is quasi-orthogonal of order $2$
with respect to the measure $(x-c)^2 d \mu_0$; that is, $ \int_I S_n(x) p(x) (x-c)^2 d\mu_0 = 0$ for all polynomials
$p$ of degree at most $n- 3$. The zeros of $S_n$ were analyzed in \cite{AlMarRezRon}, where it was proved that
the zeros of $S_{n}$ are real, simple, and at least $n-1$ of them belong to the interval $I$ of the support of
$d\mu_0$, when $c$ is an end point of the interval $I$. More precisely, let the zeros be denoted
by $z_{k,n}$, $k=1,2,\ldots, n$, and they are arranged in increasing order with $z_{n,n}$ being the largest one.
If $c = \sup I$ and neither $S_{n}(c) >0$ nor $S_{n}' (c)>0$, then the largest zero satisfies
$c \leq z_{n,n} < c + \frac{ c- z_{1,n}}{n-1}$ and $|z_{n,n} - c| < |z_{n-1,n} -c|$. Moreover, if $A_{0}\neq0$, then
$z_{n,n}- c < \frac{1}{2} (A_{1}/A_{0})^{1/2}$. Making a change of variable $x \mapsto -x$ in the inner product,
the same analysis applies to the case of $c = \inf I$.

The case when $c$ is outside of the convex hull of the support of $d \mu_{0}$ was analyzed in \cite{MarPePi}
under the assumption that $A_0 =0$. In this case, $S_{n}$ again has real, simple zeros and at least $n-1$ of them
belong to $I$. If $I$ is a bounded interval, then for $n$ large enough and $c > \sup I$, $S_{n}$ has a zero at the
right hand side of $c$. This zero converges to $c$ when $n$ tends to infinity, that is, $c$ becomes an attractor of
this exceptional zero. Furthermore, the zeros of $P_{n-1}(\cdot;d\mu_0)$ interlace with the zeros of $S_{n}$. Finally,
it was proved that every zero of $S_{n}$ is an increasing and bounded function of $N$.

When the matrix $A$ in \eqref{Sobolevtypeform2} is a non-diagonal matrix, the case $m=1$ was studied in
\cite{AlMarRez95}. Assuming that $A$ is positive definite, it was shown that if $I$ is a bounded interval
and $c >\sup I$, then there exists a positive integer $n_{0}$ such that, for $n\geq n_{0}$, the zeros of $S_{n}$
are real, simple, all except the largest one are located in $(\inf I, c)$, whereas the largest zero is greater than $c$.
Furthermore, if the non-diagonal entry $A_{0,1}$ is positive, then more can be said on the distance from the largest
zero to $c$,
\begin{enumerate}
\item  If $A_{0,0}A_{1,1} \geq 2 A_{0,1}^{2}$, then $z_{n,n} -c < \frac{A_{1,1}}{ 2 (A_{0,0}A_{1,1} - A_{0,1}^{2})^{1/2}}.$
\item  If  $A_{0,1}^{2}\geq A_{0,0}A_{1,1} \geq 2 A_{0,1}^{2},$ then $z_{n,n} -c < A_{0,1}/ A_{0,0}.$
\end{enumerate}

If $m>1$ and $A$ is a diagonal matrix, the zeros of $S_n$ were studied in \cite{ALR96}. In this case,
$S_{n}$ is quasi-orthogonal of order $m+1$ with respect to the measure $(x-c)^{m+1} d\mu_0$ as
in \eqref{SobolevtypeOPcon}. If $c$ does not belong
to the interior of the interval $I$ that supports $d\mu_0$, $S_{n}$ has at least $n-m-1$ zeros with odd multiplicity in the
interior of  $I$, whenever $n\geq m+1$. More generally, let $B(\mu_0)$ denote the support of the measure $d \mu_0$
and let $\Delta$ denote the convex hull of $B(\mu_0)$. If $\bar{n}$ denote the number of terms in the discrete part of \eqref{Sobolevtypeform2} whose order of derivative is less than $n$, then $S_{n}$ has at least $n-\bar{n}$ changes of
sign in the interior of $\Delta$. Furthermore, if $C_\a$ denote the open connected components of
$\mathrm{Int} \Delta \backslash B(\mu_0)$, then, for $n> m+1$, the number of zeros of the polynomial $S_{n}$
located in each component $C_\a$ is less than or equal to either $m+1$ or $m+2$, when $m$ is even or odd,
respectively.  Further refined results on the locations of zeros in $C_\a$ can be found in \cite{ALR96}.

\subsection{Sobolev inner products of the first type}
Zeros of Sobolev orthogonal polynomials with respect to the inner product
\begin{equation} \label{eq:ipd-sec9}
  \la f,g \ra_S = \int_{\RR} f(t)g(t)d \mu_0 +  \int_{\RR} f'(t)g'(t)d \mu_1
\end{equation}
were studied in the very beginning of Sobolev orthogonal polynomials.
In \cite{Alt}, Althammer proved that the zeros of $S_{n}$ are real, simple and are located in the interval $[-1,1]$
when $d\mu_{0}=dx$ and $d\mu_{1}= \lambda dx$, both supported on the interval $[-1,1]$.
Similar results were established by Brenner \cite{Br72} for $d\mu_{0}=e^{-x}dx$ and $d\mu_{1}=\lambda e^{-x}dx$, both
supposed on $[0,\infty)$. As already mentioned in Section 2, Althammer gave an example that shows the zeros of a
Sobolev orthogonal polynomials of degree $2$ can have a zero outside the support of the measures. An interesting
example in this regard appears in \cite{Mei94}, for which $d \mu_0 = dx$ on $[-1,3]$, $d\mu_1(x) = \lambda $ on
$[-1,1]$ and $d\mu_1(x)=1$ on $[1,3]$, where $\lambda > 0$. For $\lambda$ sufficiently large, it was shown that $S_{2n}$ has
exactly two real 
zeros, one in $(1,3)$ and one in $(-3,-1)$, and $S_{2n+1}$ has exactly one real zero, located in $(1,3)$. The example
was generalized to the situation that $d\mu_0$ has at least one point of increasing in $(1,a]$,  where $a >1$, 
 $d\mu_1 =
\lambda d \mu_0$ on $[-1,1]$ and $d\mu_1 =d\mu_0$ on $[1,a]$, with $\mu_2'(x) \ge c > 0$ on $(-1,1)$.

Zeros of Sobolev orthogonal polynomials are fairly well understood when $(d\mu_0, d\mu_1)$ is a coherent pair.
In this paragraph, we follow the notation in Subsection 5.1. For the inner product $\la \cdot,\cdot\ra_\lambda$ in
\eqref{eq:cohernt-ipd}, we denote the Sobolev orthogonal polynomials by $S_n(\cdot;\lambda)$ and denote the ordinary orthogonal
polynomial with respect to $d\mu_i$ by $P_n(\cdot; d\mu_i)$. Let $I_{0}$ denote the support of the measure $d\mu_{0}$.
It was proved in \cite{MeideBr02} that $S_{n}(\cdot;\lambda)$ has $n$ real, simple zeros and at most one of them is
outside $I_{0}.$ Furthermore, if $(z^{\lambda}_{k,n})_{k=1}^{n}$ and $(x_{k,n} (d\mu_{j}))_{k=1}^{n}$, denote the zeros of
$S_{n}(\cdot;\lambda)$ and $P_{n} (\cdot; d\mu_{j})$, respectively, then
\begin{enumerate}
 \item $z^{\lambda}_{1,n} < x_{1,n} (d\mu_{0}) < z^{\lambda}_{2,n} < \cdot \cdot < z^{\lambda}_{n,n} < x_{n,n} (d\mu_{0}), $
 \item  $z^{\lambda}_{1,n} < x_{1,n-1}( d\mu_{j}) < z^{\lambda}_{2,n} < \cdot \cdot <x_{n-1,n-1}( d\mu_{j}) < z^{\lambda}_{n,n},  \quad j=0,1.$
\end{enumerate}
Interlacing property between the zeros of two consecutive Sobolev orthogonal polynomials, that is,
$$
z^{\lambda}_{1,n} < z^{\lambda}_{1,n-1} < z^{\lambda}_{2,n} < \cdot \cdot <z^{\lambda}_{n-1,n-1} < z^{\lambda}_{n,n},
$$
holds when $d\mu_0$ and $d\mu_1$ are both Laguerre weight or both Jacobi weights.

If $\{d\mu_0, d\mu_1\}$ is a generalized coherent pair (see Section 5), however, few results on zeros of Sobolev
orthogonal polynomials are known. The case of (0,2) coherent pairs was analyzed in \cite{BrMe95}, where it
was shown that if $\lambda$ is large enough, then the zeros of $S_{n}(\cdot; \lambda)$ are real, simple and
interlace with the zeros of $P_{n} (\cdot; d\mu_{j}), j=0,1$, when $I_{0}= I_{1}$; furthermore, at most two of the zeros
are outside $I_{0}$. In \cite{Mor07} an illustrative example of $(2,0)$ coherent pair associated with Freud weights
was studied, in which the zeros of  $S_{n}(\cdot,\lambda)$ are shown to be real, simple and enjoy a separation
property.

For the inner product \eqref{eq:ipd-sec9}, potential theoretic method was used in \cite{GK} to study asymptotic
distribution of zeros and critical points Sobolev orthogonal polynomials. Under the assumption that
the support $I_j$ of the measure $d\mu_j$ is compact and regular for the Dirichlet problem in
$\overline{\CC} \setminus I_j$ for $j = 0, 1$ and further regularity assumption, it was proved that the
critical points, or zeros of the derivative of $S_n$, have a canonical asymptotic distribution supported on the real
line; that is, their counting measure  converges weakly to the equilibrium measure of the union
$\Sigma= I_{0} \cup I_{1}$. For the zeros of $S_n$, it was shown that the weak* limit of a subsequence of
their counting measure is supported on a subset of $\Sigma \cup \bar{V}$, where $V= \cup_{r>0} V_{r}$ and
$V_{r}$ denotes the union of those components of $\{z\in \CC: g_{\bar{\CC}\backslash \Sigma}(z; \infty) < r \}$
having empty intersection with $I_{0}$, in which $g_A(z;\infty)$ denotes the Green function of the set $A$ with
pole at infinity.  Further result on the weak* limit was given in the notion of balayage of a measure on
the compact set $K : =  \partial V \cup (\Sigma \backslash V)$. If $K = \Sigma$, then the weak* limit of the counting
measure of the zeros is the equilibrium measure on $\Sigma$.

For Sobolev inner products \eqref{SobolevtypeOPm}, asymptotic distributions of zeros of Sobolev orthogonal
polynomials and their derivatives were studied in  \cite{DOP09} in the setting that
$d\mu_k (x) = x^{\gamma} e ^{-\varphi_k(x)}dx$, where $\varphi_k$ are functions having polynomial growth at infinity
or iterative exponential functions, on $[0,\infty)$, which includes the Freud weights as special cases.

In another direction concerning zeros of Sobolev orthogonal products with respect to the inner product
\eqref{SobolevtypeOPm2}, global distribution of the zeros is related to the boundedness of the multiplication
operator $f \mapsto x f$ in the norms associated with the inner product in \cite{CD03}. More precisely, if this operator is
bounded, that is, there exists a positive real number $M$ such that $\langle xf, xf \rangle_ {S}\leq
M \langle f, f \rangle_ {S},$ then the set of zeros of the Sobolev orthogonal polynomials is contained in the ball
centered at the origin with radius $M$ and the vector of measures $(d\mu_{0}, \ldots, d\mu_{m})$ has compact
support. The paper also contains other qualitative results on the zeros. Notice, however, that this approach
does not give analytic properties, such as real, simple, interlacing,  of the zeros.

\section{Asymptotics}
\setcounter{equation}{0}

For ordinary orthogonal polynomials, three different types of asymptotics are considered: strong asymptotics,
outer ratio asymptotics and $n$-th root asymptotics. All three have been considered in the Sobolev setting
and we summarize most relevant results in this section.

\subsection{Sobolev inner products of the second type}
The first work on asymptotics for Sobolev orthogonal polynomials was carried out in \cite{MA93} for the
inner product $\langle f, g\rangle _{S}= \int_{-1}^{1} f(x) g(x) d\mu (x) + M_{1} f'(c) g' (c)$, where $c\in \RR$,
$M_{1}>0$ and the measure $d \mu_{0}$ belongs to the Nevai class $M(0,1)$. Using the outer ratio asymptotics
for the ordinary orthogonal polynomials $P_n(\cdot;d\mu_0)$ and the connection formula between
$P_{n}(\cdot; d\mu_{0})$ and the Sobolev orthogonal polynomials $S_n$, as in \eqref{SobolevtypeOPcon}, it was
shown that, if
$c\in \RR \setminus \mathrm{supp} \mu_{0},$ then
$$
\lim_{n \to \infty}\frac {S_{n}(z)}{P_{n}(z,d\mu_{0})}= \frac {(\Phi(z) -\Phi(c))^{2}}{ 2 \Phi(z) (z-c)}, \qquad
  \Phi(z):=  z + \sqrt{ z^{2} -1},
$$
locally uniformly outside the support of the measure, where $\sqrt{ z^{2} -1}>0 $ when $z>1$. Furthermore,
if the measure belongs to the Szeg\H{o} class, then the outer strong asymptotics for  $S_{n}$ can be
deduced in a straightforward way. If $c\in \mathrm{supp} \mu_{0}$, then $\lim\limits_{n\to \infty}\frac {S_{n}(z)}
{P_{n}(z; d\mu_{0})}= 1$ outside the support of the measure. 
By comparing the corresponding polynomials, such results can be deduced in a similar
manner if a mass point is added to the measure.

The first extension of the above results was carried out in \cite{AlMarRez95} for the Sobolev inner product
\eqref{Sobolevtypeform2} with a $2\times2$ matrix $A$. Under the same conditions on the measure, it was
proved that
$$
\lim_{n\rightarrow\infty}\frac {S_{n}(z)}{P_{n}(z, d\mu_{0})}= \left(\frac {(\Phi(z) -\Phi(c))^{2}}{ 2 \Phi(z) (z-c)}\right)^r,
\qquad r: = \mathrm{rank A},
$$
locally uniformly outside the support of the measure. The second extension appears in \cite{LMA95} for the
inner product
$$
\la f, g\ra =  \int  f(x) g(x) d\mu_{0} (x) + \sum_ {j=1}^{N} \sum_ {k=0}^{N_{j}} f^{(k)}(c_{j}) L_{j,k} (g;c_{j}),
$$
where $d\mu_{0}\in M(0,1)$, $\{c_{k}\}_{k=1} ^{N}$ are real numbers located outside the support of the measure,
and $L_{j,k} (g;c_{j})$ is the evaluation at $c_{j}$ of the ordinary differential operator $L_{j,k}$ acting on $g$ such
that $L_{j,N_{j}}$ is not identically zero for $j=1, 2, \cdot\cdot\cdot, N$. Assuming that the inner product is
quasi-definite so that a sequence of orthogonal polynomials exist, then on  every compact subset in
$\CC \setminus \mathrm{supp} d\mu_0$,
$$
\lim_{n\to \infty} \frac{S_{n} ^{(\nu)}(z)}{ P_{n} ^{(\nu)}(z, d\mu_{0})}
   = \prod_{j=1}^{m} \left( \frac {(\Phi(z) -\Phi(c))^{2}}{ 2\Phi(z) (z-c)} \right)^{I_{j}},
$$
where $I_{j}$ is the dimension of the square matrix obtained from the matrix of the coefficients of $L_{j,N_{j}}$
after deleting all zero rows and columns. The key idea for the proof is to reduce the Sobolev orthogonality in
this setting to ordinary quasi-orthogonality, so that the polynomial $S_{n}(x)$ can be expressed as a short
linear combination of polynomials $P_{m}(x; d\mu_{0})$ and then consider the behavior of the coefficients
in the linear combination.

If both the measure $d\mu_0$ and its support $\Delta$ are regular, then techniques from potential theory were
used in \cite{LagPi} to derive the $n$-th root asymptotics of the Sobolev orthogonal polynomials,
$$
\limsup_{n\to \infty} \|S_{n}^{(j)}\|_ {\Delta}^{1/n}= C(\Delta), \qquad j\geq 0,
$$
where $\|\cdot \|_ {\Delta}$ denotes the uniform norm in the support of the measure and $C(\Delta)$ is its
logarithmic capacity.

When the support of the measure in the inner product \eqref{Sobolevtypeform2} is unbounded, the analysis
has been focused on the case of the Laguerre weight function. The first study in \cite{AlvMor} considered
the case that $c=0$ and $A$ is a $2\times 2$ diagonal matrix (see also \cite{MarMor} for a survey on the
unbounded case).  Assuming that the leading coefficient of $S_n$ is normalized to be $\frac{(-1)^{n}}{n!}$,
the following results on the asymptotic behavior of $S_n$ were established:
\begin{enumerate}[\quad (1)]
\item (Outer relative asymptotics)
 $\lim\limits_{n\to \infty} \frac{S_{n} (z)}{ L_{n}^{(\alpha)}(z)}=1 $ uniformly on compact subsets of the
exterior of the positive real semiaxis.
\item (Outer relative asymptotics for scaled polynomials)
  $\lim\limits_{n\to \infty} \frac{S_{n} (nz)}{ L_{n}^{(\alpha)}(nz)}=1$ uniformly on compact subsets of the exterior of $[0,4]$.
\item (Mehler-Heine formula)  $\lim\limits_{n\to \infty} \frac{S_{n} (z/n)}{n^{\alpha}}= z^{-\alpha/2} J_{\alpha+ 4} (2 \sqrt{z})$ uniformly on compact subsets of the complex plane, assuming that $\mathrm{rank} A=2$.
\item (Inner strong asymptotics)
$$
  \frac{S_{n} (x)}{n^{\alpha/2}}= c_{3}(n) e^{x/2} x^{-\alpha/2}J _{\alpha+4} (2 \sqrt{(n-2)x}) + O\left( n^{- \min\{\alpha + 5/4, 3/4\}}\right)
$$
on compact subsets of the positive real semiaxis, where $\lim\limits_{n\to \infty}c_{3}(n)=1.$
\end{enumerate}
The case that $\mathrm{rank}\, A=1$ is also studied, we only state the results when $A$ has full rank for sake of simplicity.
Finally, if the point $c$ is a negative real number, then the following outer relative asymptotics was established in
\cite{FejHueMar},
$$
  \lim_{n\to \infty} \frac{S_{n} (z)}{ L_{n}^{(\alpha)}(z)} = \left(\frac{\sqrt{-z} - \sqrt{-c}}{\sqrt{-z} + \sqrt{-c}}\right)^r,
  \qquad r = \mathrm{rank} A,
$$
uniformly on compact subsets of the exterior of the real positive semiaxis. 

When $c =0$ and $A$ is a non-singular diagonal matrix of size $m+1$, the following asymptotic properties of the
Sobolev orthogonal polynomials with respect to the inner product \eqref{Sobolevtypeform2} were obtained in
\cite{AlMoPeRe},
\begin{enumerate}[\quad (1)]
\item (Outer relative asymptotics)
For every $\nu \in \NN$, $\lim\limits_{n\to \infty} \frac{S_{n}^{(\nu)} (z)}{ (L_{n}^{(\alpha)})^{(\nu)}(z)}=1$ uniformly on compact subsets of the exterior of the positive real semiaxis.

\item (Mehler-Heine formula) $\lim\limits_{n\to \infty} \frac{(-1)^{n}}{n!}\frac{S_{n} (z/n)}{n^{\alpha}}= (-1) ^{m+1} z^{-\alpha/2} J_{\alpha+ 2m +2} (2 \sqrt{z})$ uniformly on compact subsets of the complex plane.
\end{enumerate}
It should be pointed out that the above Mehler-Heine formula cannot be directly deduced from
the connection formula \eqref{SobolevtypeOPcon}. 
As an interesting consequence of the Mehler-Heine formula and the Hurwitz Theorem, the local behavior of zeros of
these Sobolev orthogonal polynomials can be deduced. 

\subsection{Sobolev inner products of the first type}
We first consider the case of coherent pair. Let $\{\mu_{0}, \mu_{1}\}$ be a coherent pair of measures and
$\mathrm{supp} \mu_{0}= [-1,1]$. Then the outer relative asymptotic relation for the Sobolev orthogonal polynomials
with respect to \eqref{eq:ipd-sec9} in terms of orthogonal polynomials $P_n(\cdot; d\mu_1)$ is (\cite{MMPP})
$$
\lim\limits_{n\to \infty} \frac{S_{n} (z)}{P_{n}(z;d\mu_1)} = \frac{2}{\Phi'(z)}, \qquad
\Phi(z):= z+\sqrt{z^2-1},
$$
where $\sqrt{z^2-1} > 0$ when $z > 1$, uniformly on compact subsets of the exterior of the interval $[-1,1]$.
The key idea of the proof is to show that the coefficient $b_n(\lambda)$ in \eqref{eq:SOP-OP-A1} satisfies
$\lim\limits_{n\to\infty} b_{n}(\lambda)=0$, which is a consequence of the inequality
\begin{equation*}
\pi_{n} + \lambda n^{2}\tau_{n-1} \leq \|S_{n}(\cdot, \lambda)\|_{\lambda}^2 \leq  \pi_{n} + \hat{a}_{n-1}^{2} \pi_{n-1} + \lambda n^{2} \tau_{n-1},
\end{equation*}
where $\pi_{n} := \| P_{n}(\cdot, d\mu_{0})\|_{d\mu_{0}}^{2}$ and $\tau_{n}:= \| P_{n}(\cdot, d\mu_{1})\|_{d\mu_{1}}^{2}$,
on the weighted $L^{2}$ norms.

When the measures $\mu_{0}$ and $\mu_{1}$ are absolutely continuous and belong to the Szeg\H{o} class, the
Bernstein-Szeg\H{o} theory is applied in \cite{Mart} to derive the outer relative asymptotics
$\lim\limits_{n\to\infty} \frac{S_{n} (z)}{ P_{n}(z,d\mu_{1})}= \frac{2}{\Phi'(z)}$ locally uniformly outside $[-1,1]$. As a straightforward consequence, the strong outer asymptotics follows. If $\mu_{0}$ is any finite Borel measure supported in $[-1, 1]$ in \cite{Mart}, then the outer relative asymptotics $\lim\limits_{n\to\infty}
 \frac{S'_{n} (z)}{ n P_{n-1}(z,d\mu_{1})}= 1$ holds, which illustrates the role of the measure involved in the
nonstandard part of the Sobolev inner product.

An extension of the above results for the inner product \eqref{SobolevtypeOPm2} was carried out in \cite{MartPij},
where $d\mu_{m}$ is assumed to be absolutely continuous and belongs to the Szeg\H{o} class, whereas
other measures are assumed to be positive and supported in $[-1,1]$. When the supports of the measures involved
in the Sobolev inner product satisfy certain nesting property and the measure $d\mu_{0}$ is regular, the $n$-root
asymptotic behavior is established. More specifically, for every nonnegative integer number $j$,
\begin{enumerate}[\quad (1)]
\item $\limsup\limits_{n\to\infty} | S^{(j)}_{n}(z)|^{1/n} = C(\Delta) e^{g_{\Delta}(z, \infty)}$
for every $z\in \CC$ up to a set of capacity zero;

\item $\lim\limits_{n\to \infty} |S^{(j)}_{n}(z)|^{1/n} = C(\Delta) e^{g_{\Delta}(z, \infty)}$ uniformly on each compact subset
of the complex plane outside the disk centered at the origin with radius defined in terms of the Sobolev norm of the
multiplication operator,
\end{enumerate}
where $g_{\Delta} (0, \infty)$ is the Green function of $\Delta$ with the infinite as a pole (\cite{LagPi}).

For measures of coherent pairs that have unbounded support, asymptotic properties of the corresponding
Sobolev orthogonal polynomials have been extensively studied in the literature (see \cite{MarMor} for an overview).
The outer relative asymptotics, the scaled outer asymptotics as well as the inner strong asymptotics of such polynomials
have been considered for all families of coherent pairs and symmetrically coherent pairs. It should be point out
that the outer relative asymptotics involves the parameter $\lambda$ in the inner product \eqref{eq:cohernt-ipd},
which does not appear in the case of bounded supports. Nevertheless, the technicalities in the unbounded case
are similar to those in the bounded case, the main difficulty is on estimates for the coefficients that appear
in the relations between Sobolev and ordinary orthogonal polynomials.

For instance, in the case of symmetrically coherent Hermite case, that is, when either $d\mu_{0}= e^{-x^{2}}$ and 
$d\mu_{1}= \frac{1}{x^{2}+ \xi^{2}} e^{-x^{2}}$ or  $d\mu_{0}= x^{2}+ \xi^{2} e^{-x^{2}}$ and $d\mu_{1}= e^{-x^{2}}$, 
it holds, respectively,
\begin{enumerate}[\quad (1)]
\item $\lim\limits_{n\to \infty} \frac{S_{n}(z,\lambda)}{H_{n}(z)}= \theta (\lambda)$ uniformly on compact subsets outside 
the real line;
\item $\lim\limits_{n\to \infty} \lfloor \f{n}{2} \rfloor^{1/2}\frac{S_{n}(z,\lambda)} {H_{n}(z)} = \theta (\lambda) (\mp z + |\xi|),$ if $z\in \CC_{\pm}$, uniformly on compact subsets of the half planes $\CC_{\pm}= \{ x \pm i y, y>0 \}$, respectively,
\end{enumerate}
where $\theta (\lambda):= \frac{\Phi(2\lambda +1)}{\Phi(2\lambda +1)-1}$, as shown in \cite{MarMor}.

The case $d\mu_{0}= d\mu_1= e^{-x^{4}}$ of the inner product \eqref{eq:cohernt-ipd}, or the (2,0) self-coherent pair with
Freud weight, was studied in \cite{CaMarMor}, where the connection between Sobolev and ordinary orthogonal
polynomials is given by $P_{n}(x;d\mu)= S_{n}(x;\lambda) + c_{n-2}(\lambda) S_{n-2}(x;\lambda)$. In order to deduce its
outer relative asymptotics, it is necessary to estimate the values of $c_{n}(\lambda)$ for $n$ sufficiently large. It turns
out that $\lim\limits_{n\to \infty} n^{-1/2} c_{n}(\lambda)= \frac{1} {6\sqrt{3}}$, which leads to, in a straightforward way,
$\lim\limits_{n\to \infty} \frac{S_{n}(z,\lambda)}{P_{n}(z, d\mu_{0})}= \frac{3}{2}$ uniformly on compact subsets
outside the real line. The outer relative asymptotics, with appropriate scaling, was also obtained in \cite{CaMarMor},
taking into account the Rakhmanov-Maskhar-Saff constant for these Freud polynomials.

For asymptotics, few examples of generalized coherent pairs of measures have been considered in the literature.
The Freud-Sobolev case considered above motivated further study of Sobolev inner products with exponential weights.
Let $W(x) = e^{-Q(x)}$, where $Q$ is a continuous even function in $\RR$ such that  $Q''$ is continuous in $(0,\infty)$
and $Q'>0$ in $(0,\infty)$, and $\alpha \leq \frac{x Q''(x)}{Q'(x)} \leq \beta$ for some $\beta > \alpha > 0$. The asymptotic
behavior of Sobolev orthogonal polynomials for the inner product \eqref{eq:ipd-sec9} with
$d\mu_{0}(x) = (\psi (x) W(x))^{2} dx$ and $d\mu_{1}(x)= \lambda W^{2} (x) dx$, where $\psi \in L_{\infty} (\RR)$,
was studied in \cite{GeLuMar}. The derivative of the orthonormal Sobolev polynomials $s_{n}$ behaves as $\lambda^{-1/2} p_{n-1}(.; W^2)$, the orthonormal polynomials with respect to the measure $W^{2} (x)dx$, in the sense of the $L_{2}$ asymptotics, that is, $\| s'_{n} (\cdot)- \lambda ^{-1/2} p_{n-1} (\cdot; W^{2}) \| _{W^{2} dx}= O (\frac{a_{n}}{n})$,
where $a_{n}$ is the Mhaskar-Rakhmanov-Saff number for $Q$. On the other hand, an uniform bound for the corresponding scaled polynomials  is also deduced with the help of a simple Nikolskii inequality.  As in the bounded case, 
the measure $d\mu_{1}$ plays a key role in the asymptotic behavior of the Sobolev orthonormal polynomials.

\section{Sobolev orthogonal polynomials of several variables}
\setcounter{equation}{0}

In contrast to one variable, Sobolev orthogonal polynomials of several variables are studied only recently.
In this section we report what has been done in this direction.

\subsection{Orthogonal polynomials of several variables}

For $x \in \RR^d$ and $\a \in \NN_0^d$, the (total) degree of the monomial $x^\a$ is, by definition,
$|\a|:= \a_1+\cdots+ \a_d$. Let $\Pi_n^d$ denote the space of polynomials of total degree $n$ in
$d$-variables. It is known that $\dim \Pi_n^d = \binom{n+d}{n}$. Let $\Pi^d$ denote the space of all polynomials
in $d$-variables. Let $\la\cdot, \cdot \ra$ be an inner product defined on $\Pi^d \times \Pi^d$. A polynomial
$P \in \Pi_n^d$ is orthogonal if $ \la P, q\ra = 0$ for all $q \in \Pi_{n-1}^d$. For $ n \in \NN_0^d$, let $\CV_n^d$
denote the space of polynomials of degree $n$. Then $\dim \CV_n^d = \binom{n+d-1}{n}$. In contrast to one-variable,
the space $\CV_n^d$ can have many different bases when $d \ge 2$. Moreover, the elements in $\CV_n^d$ may not
be orthogonal to each other.

For the structure and properties of orthogonal polynomials in several variables, we refer to \cite{DX}. In the following,
we describe briefly two families of orthogonal polynomials as examples.

\medskip\noindent
{\bf Example 1.} For $i=1,2$, let $w_i$ be a weight functions defined on an interval $[a_i,b_i]$ of the real line and let
$p_n(w_i; \cdot)$ be the orthogonal polynomial of degree $n$ with respect to $w_i$. With respect to the weight
function $W(x,y) := w(x) w(y)$ on $[a_1,b_1]\times [a_2,b_2]$, one family of mutually orthogonal basis of $\CV_n^2(W)$
is given by
\begin{equation} \label{eq:productOP}
  P_k^n(x,y) = p_k(w_1;x) p_{n-k}(w_2;y), \qquad 0 \le k \le n.
\end{equation}
Because $\Pi_n^d$ is defined in terms of total degree, which is different from the tensor product $\Pi_n \times \Pi_n$,
every element of $\CV_n^2$ is of total degree $n$.

\medskip

For our second example, we need the definition of spherical harmonics. Let $\CP_n^d$ denote the space
of homogeneous polynomials of degree $n$ in $d$-variables. A polynomial $Y$ is a spherical harmonic of
degree $n$ if $Y \in \CP_n^d \cap \ker \Delta$. Let $\CH_n^d$ denote the space of spherical harmonics of
degree $n$. It is known that
$$
  a_n^d: = \dim \CH_n^d = \binom{n+d-1}{n} - \binom{n+d-3}{n-2}.
$$
The elements of $\CH_n^d$ are orthogonal to lower degree polynomials with respect to the inner product
$$
   \la f,g \ra_{\sph}: =   \int_{\sph} f(\xi) g(\xi) d\s(\xi),
$$
where $d\s$ denotes the surface measure on $\sph$. An orthonormal basis can be constructured explicitly
in terms of the Geganbauer polynomials in spherical coordinates. For properties of harmonic polynomials and
their relations to orthogonal polynomials, we refer to \cite{DaiX, DX} and references therein.

\medskip\noindent
{\bf Example 2.}
For $\mu > -1$, let $\varpi_\mu(x) = (1-\|x\|^2)^{\mu-1/2}$ be the weight function defined on the unit ball
$\ball = \{x \in \RR^d: \|x\|\le 1\}$, where $\|x\|$ denotes the Euclidean norm of $x \in \RR^d$. Orthogonal
polynomials with respect to $\varpi_\mu$ can be given in several different formulations. We give one basis of
$\CV_n^d(\varpi_\mu)$ in terms of the Jacobi polynomials and spherical harmonics in the spherical coordinates
$x = r \xi$, where $0 < r\le 1$ and $\xi \in \sph = \{x: \|x\|=1\}$. For $0 \le j \le n/2$ and $1 \le j \le a_{n-2j}^d$,
define
\begin{equation} \label{eq:ball-basis}
  P_{j,\nu}^n(x) := P_{j}^{(\mu, n-2j + \frac{d-2}{2})}(2\,\|x\|^2 -1)\, Y_\nu^{n-2j}(x),
\end{equation}
where $\{Y_\nu^{n-2j}: 1 \le \nu \le a_{n-2j}^d\}$ is an orthonormal basis of $\CH_{n-2j}^d$. Then the
set $\{P_{j,\ell}^{\mu,n}(x): 0 \le j \le \tfrac{n}{2}, \,1 \le \ell \le a_{n-2j}^d \}$ is a mutually orthogonal basis
of $\CV_n^d(\varpi_\mu)$. The elements of $\CV_n^d$ are eigenfunctions of a second order
differential operator $\CD_\mu$. More precisely, we have
\begin{align} \label{eq:Bdiff}
    \CD_\mu P =  -(n+d) (n + 2 \mu)P, \qquad \forall P \in \CV_n^d(\varpi_{\mu}),
\end{align}
where
\begin{equation*}
  \CD_\mu := \Delta  - \sum_{j=1}^d \frac{\partial}{\partial x_j} x_j \left[
  2 \mu  + \sum_{i=1}^d x_i \frac{\partial  }
  {\partial x_i} \right].
\end{equation*}

\subsection{Sobolev orthogonal polynomials on the unit ball}
At the moment of this writing, we know more about Sobolev orthogonal polynomials on the unit ball than
on any other domain.

The first work in this direction deals with the inner product  \cite{X06}
$$
  \la f, g \ra_{\Delta} : = \int_{\ball} \Delta \left[(1-\|x\|^2)f(x) \right]\Delta \left[(1-\|x\|^2)g(x) \right] dx,
$$
which arises from numerical solution of the Poisson equation \cite{AH05}. The geometry of the ball and
\eqref{eq:ball-basis} suggests that one can look for a mutually orthogonal basis of the form
\begin{equation} \label{eq:ball-basis2}
   q_j(2 \|x\|^2 -1) Y_\nu^{n-2j}(x), \qquad Y_\nu^{n-2j} \in \CH_{n-2j}^d,
\end{equation}
where $q_j$ is a polynomial of degree $j$ in one variable. Such a bias was constructed in \cite{X06} for the
space $\CV_n^d(\Delta)$ with respect to $\la \cdot ,\cdot \ra_\Delta$. As a result, it was shown that
$$
   \CV_n^d(\Delta) = \CH_n^d \oplus (1-\|x\|^2) \CV_{n-2}(\varpi_2).
$$

The next inner product considered on the ball, which should be, in retrospect, the first one being considered,
is defined by
$$
  \la f,g\ra_{-1} : =   \lambda \int_{\ball} \nabla f(x) \cdot  \nabla g(x) dx + \int_{\sph} f(\xi)g(\xi) d\s(\xi),
$$
where $\lambda > 0$. An alternative is to replace the integral over $\sph$ by $f(0)g(0)$. A basis of the form
\eqref{eq:ball-basis2} was constructed explicitly in \cite{X08} for the space $\CV_n^d(\Delta)$ with
respect to $\la \cdot ,\cdot \ra_{-1}$, from which it follows that
\begin{equation}\label{eq:decom-1}
   \CV_n^d(\varpi_{-1}) = \CH_n^d \oplus (1-\|x\|^2) \CV_{n-2}(\varpi_1).
\end{equation}
The main part of the base, those in $(1-\|x\|^2) \CV_{n-2}(\varpi_1)$, can be given in terms of
the Jacobi polynomials $P_n^{(-1, b)}$ of negative index, which explains why we used the notation
$\varpi_{-1}$. Another interesting aspect of this case is that the polynomials in $\CV_n^d(\varpi_{-1})$
are eigenfunctions of the differential operator $\CD_{-1}$, the limiting case of \eqref{eq:Bdiff}.

For $k \in \NN$, the equation $\CD_{-k} Y = \lambda_n Y$ of \eqref{eq:Bdiff} was studied in \cite{PX09}, where
a complete system of polynomial solutions was determined explicitly. For $k \ge 2$, however, it is not known
if the solutions are Sobolev orthogonal polynomials. Closely related to the case of $k=2$ is the following
inner product
$$
   \la f, g \ra_{-2}: =  \lambda \int_{B^d} \Delta f(x) \Delta g(x) dx +
                        \int_{S^{d-1}} f(x) g(x) d\s, \quad \lambda > 0.
$$
An explicit basis for the space $\CV_n^d(\varpi_{-2})$ of the Sobolev orthogonal polynomials with respect to
$\la \cdot,\cdot \ra_{-2}$ was constructed in \cite{PX09}, from which it follows that
\begin{equation}\label{eq:decom-2}
    \CV_n^d(\varpi_{-2})  = \CH_n^d  \oplus  (1-\|x\|^2)\CH_{n-2}^d \oplus
       (1-\|x\|^2)^2 \CV_{n-4}^d(\varpi_2).
\end{equation}
The main part of the base, those in $(1-\|x\|^2)^2 \CV_{n-2}(\varpi_2)$, can be given in terms of
the Jacobi polynomials $P_n^{(-2, b)}$ of negative index

It turns out that the Sobolev orthogonal polynomials for the last two cases can be used in the study of the
spectral method for numerical solution of partial differential equations. This connection was established in
\cite{LX13}, where, for $s \in \NN$, the following inner product in the Sobolev space  $W_p^s(\BB^d)$ is
defined,
\begin{align} \label{eq:ipd-s}
 & \la f, g\ra_{-s}:= \la \nabla^{s}f, \nabla^{s} g \ra_{\ball}
  + \sum_{k=0}^{\rhow-1}\lambda_{k} \la  \Delta^{k}f, \Delta^{k}g\ra_{\sph},
\end{align}
where  $\lambda_{k}, \, k=0,1\dots,\rhow-1$, are positive constants, and
\begin{align*}
\nabla^{2m} := \Delta^m \quad\hbox{and}\quad  \nabla^{2m+1} := \nabla \Delta^{m},  \qquad m=1,2,\dots.
\end{align*}
For $s > 2$, the space $\CV_n^d(\varpi_{-s})$ associated with $\la \cdot, \cdot \ra_{-s}$ cannot be decomposed
as in \eqref{eq:decom-1} and \eqref{eq:decom-2}. Nevertheless, an explicit mutually orthogonal basis was
constructed in \cite{LX13}, which requires considerable effort, and the basis uses extension of the Jacobi
polynomials $P_n^{(\a,\beta)}$ for $\a, \beta \in \RR$ that avoids the degree reduction when $-\a-\beta-n \in  \{0,1,\ldots, n\}$.
The main result in \cite{LX13} establishes an estimate for the polynomial approximation in the Sobolev space
$\CW_p^s(\ball)$, the proof relies on the Fourier expansion in the Sobolev orthogonal polynomials associated
with \eqref{eq:ipd-s}.

Another Sobolev inner product considered on the unit ball is defined by
$$
 \la f ,g \ra =  \int_{\BB^d} \nabla f(x)\cdot \nabla g(x) W_\mu (x) dx + \lambda \int_{\BB^d} f(x) g(x) W_\mu(x) dx,
$$
which is an extension of the Sobolev inner product of the coherent pair in the case of the Gegenbauer weight
of one-variable. A mutually orthogonal basis was constructed in \cite{PPX13}, which has the form of
\eqref{eq:ball-basis2} but the corresponding $q_j$ is orthogonal with respect to a rather involved Sobolev
inner product of one variable.

 \def\bg{{\boldsymbol{\large {\g}}}}
 \def\one{{\mathbf 1}}
 \def\zero{{\mathbf 0}}
  \def\nb{{\mathbf n}}

\subsection{Sobolev orthogonal polynomials on the simplex}
Let $T^d$ be the simplex of $\RR^d$ defined by
$$
  T^d := \{x \in \RR^d: x_1 \ge 0, \ldots, x_d \ge 0, 1-|x| \ge 0 \},
$$
where $|x| : =x_1+\cdots + x_d$. The classical weight function on $T^d$ is defined by
\begin{equation}\label{Weight}
    \varpi_\bg (x): = x_1^{\g_1} \cdots x_d^{\g_d} (1- |x|)^{\g_{d+1}}, \quad x \in T^d,
\end{equation}
where $\g_i$ are real numbers, usually assumed to satisfy $\g_i > -1$ to ensure the integrability
of $\varpi_\bg$ on $T^d$. Let $c_\bg = 1 \Big / \int_{T^d} \varpi_\bg(x) dx$ denote the normalization
constant. We consider the inner product
$$
       \la f, g\ra_\bg : = c_\bg \int_{T^d} f(x) g(x) \varpi_\bg(x) dx.
$$
The space $\CV_n^d(\varpi_\bg)$ of orthogonal polynomials of degree $n$ for this inner product contains
several explicit bases, which have been studied extensively (cf. \cite{DX}). One particular basis is given
by the Rodrigues type formula
\begin{equation} \label{RodrigueT}
 P^{\bg}_\nb (x) := x^{-\g} (1-|x|)^{-\g_{d+1}} \frac{\partial^{|\nb|}} {\partial x^{\nb}}
     \left[x^{\g + \nb} (1-|x|)^{\g_{d+1} + |\nb|} \right],
\end{equation}
where $\frac{\partial^{|\nb|}} {\partial x^{\nb}}=\frac{\partial^{|\nb|}} {\partial x_1^{n_1} \cdots
\partial x_d^{n_d}}$ and $\nb \in \NN_0^d$. Furthermore, it is know that polynomials in
$\CV_n^d(W_\bg)$ are eigenfunctions of the differential operator
$$
 L_\bg P : =  \sum_{i=1}^d x_i(1 -x_i)   \frac {\partial^2 P} {\partial x_i^2}   -
 2 \sum_{1 \le i < j \le d} x_i x_j \frac {\partial^2 P}{\partial x_i
 \partial x_j}
  + \sum_{i=1}^d \left( \g_i +1-(|\bg|+d+1) x_i \right) \frac {\partial P}{\partial x_i},
$$
where $|\bg|:= \g_1 + \cdots + \g_{d+1}$; more precisely,
\begin{equation} \label{diff-eqn}
    L_\bg P  =  -n  \left(n+|\bg| +d \right)P, \qquad \forall P \in \CV_n^d(\varpi_\bg).
\end{equation}
If some or all $\g_i$ are equal to $-1$, the weight function becomes singular but the equation \eqref{diff-eqn}
still has a full set of polynomial solutions. In \cite{AX13}, these solutions were given explicitly and were shown
to be the Sobolev orthogonal polynomials with respect to explicitly given inner products. For $d=2$, the simplex
is the triangle $T^2$, writing the weight function as $\varpi_{\a,\beta,\g}(x) = x^\a y^\beta (1-x-y)^\g$, then the inner products
are of the form
\begin{align*}
 \la f, g \ra_{\a,\beta,-1}: = &\,  \int_{T^2} \left [x  \partial_x f(x,y) \partial_x g(x,y) + y \partial_y f(x,y) \partial_y g(x,y) \right]
               x^\a y^\beta  dxdy \\
        &\, + \lambda_1 \int_0^1 f(x,1-x)g(x,1-x) x^\a(1-x)^\beta dx \\
 \la f, g \ra_{\a,-1,-1}: = &\,   \int_{T^2} \partial_y f(x,y) \partial_y g(x,y)  x^\a dxdy, \\
        &\, + \lambda_1 \int_0^1 \partial_x f(x,0) \partial_x g(x,0) x^{\a+1} dx + \lambda_{1,0} f(1,0) g(1,0) \\
 \la f, g \ra_{-1,-1,-1}: = &\,   \int_{T^2} \partial_{xy} f(x,y) \partial_{xy} g(x,y) (1-x-y) dxdy, \\
        &\,  + \lambda_1 \int_0^1 \partial_x f(x,0) \partial_x g(x,0) dx + \lambda_2 \int_0^1 \partial_y f(0,y) \partial_y g(0,y) dy \\
        & \, + \lambda_{1,0} f(1,0) g(1,0) + \lambda_{0,1} f(0,1) g(0,1) + \lambda_{0,0} f(0,0) g(0,0).
\end{align*}
The first case can be deduced by taking limit $\g \to -1$ in the classical inner product with respect to $\varpi_{\a,\beta,\g}$,
as observed in \cite{BDFP}. Identifying the correct form of the inner product is a major step. For $d > 2$, one
needs to consider the lower dimensional faces of the simplex $T^d$. Fortunately, restrictions of the
polynomials in \eqref{RodrigueT} remain orthogonal polynomials on the faces.

Besides those considered in \cite{AX13}, no other family of Sobolev orthogonal polynomials has
been studied on the simplex.

\subsection{Sobolev orthogonal polynomials on product domain}

On the product domain $[a_1,b_1]\times [a_2,b_2]$ of $\RR^2$, define the product weight function
$$
      \varpi(x_1, x_2) = w_1(x_1) w_2(x_2),
$$
where $w_i$ is a weight function on $[a_i,b_i]$ for $i=1,2$. With respect to $\varpi$, we consider the
Sobolev inner product
$$
  \la f, g\ra_S: = \int_{[a,b]^2} \nabla f(x,y) \cdot \nabla g(x,y) \varpi(x,y) dx dy + \lambda f(c_1,c_2)g(c_1,c_2),
$$
where $\nabla f = (\partial_x f, \partial_y g)$, $\lambda > 0$ and $(c_1,c_2)$ is a fixed point in $\RR^2$.

A moment reflection shows that orthogonal polynomials with respect to this inner product are no longer products
as that of \eqref{eq:productOP} in general. Two cases are consider in \cite{FMPPX}. The first one is the product
Laguerre weight for which
$$
  \la f, g\ra_S: = \int_0^\infty \int_0^\infty \nabla f(x,y) \cdot \nabla g(x,y) w_\a(x) w_\beta(y) dx dy + \lambda_k f(0,0) g(0,0),
$$
where $w_\a(x) = x^\a e^{-x}$. The Sobolev orthogonal polynomials are related to the polynomials
$Q_{j,m}^{\a,\beta}$ defined by
$$
  Q_{j,m}^{\a,\beta}(x,y) := Q_{m-j}^\a (x) Q_j^\beta(y) \quad \hbox{with}\quad
    Q_n^\a(x) := \wh L_n^{(\a)}(x) + n \wh L_{n-1}^{(\a)}(x),
$$
where $\wh L_n^{(\a)}$ denotes the $n$-th monic Laguerre polynomial. 
The polynomial $Q_n^\a$ is monic and it satisfies $ \f{d}{dx} Q_n^\a(x) = n \wh L_{n-1}^{(\a)}(x)$.  For $0 \le k \le n$, let
$S_{n-k,k}^{\a,\beta}(x,y) = x^{n-k} y^k + \cdots$ be the monic Sobolev orthogonal polynomials of degree $n$. Define
the column vectors
$$
 \QQ_n^{\a,\beta} := (Q_{0,n}^{\a,\beta}, \ldots, Q_{n,n}^{\a,\beta})^T \qquad  \hbox{and}\qquad
   {\mathbb S}_n^{\a,\beta}:= (S_{0,n}^{\a,\beta}, \ldots, S_{n,n}^{\a,\beta})^T.
$$
It was shown in \cite{FMPPX} that there is a matrix $B_{n-1}$ such that
$$
    \QQ_n^{\a,\beta} = {\mathbb S}_n^{\a,\beta} + B_{n-1} {\mathbb S}_{n-1}^{\a,\beta}
$$
and the matrix $B_{n-1}$ and the norm  $\la {\mathbb S}_n^{\a,\beta}, {\mathbb S}_n^{\a,\beta}\ra_S$ can both be computed by one
recursive algorithm.

The above construction of orthogonal basis for the product domain works if $w_1$ and $w_2$ are self-coherent.
The case that both are the Gegenbauer weight functions was given as a second example in \cite{FMPPX}.

\subsection{Miscellaneous results}

Sobolev orthogonal polynomials in two variables that satisfy second order partial differential equations
are discussed In \cite{LL06}. The paper, however, contains few concrete examples.

In \cite{PI}, a large family of commutative algebras of partial differential operators invariant under rotations,
called Krall-Jacobi algebra, is constructed and analyzed. The study leads naturally to the Sobolev orthogonal
polynomials with respect to an inner product on the unit ball that involves spherical Laplacian.

For inner product that contains additional point evaluations of functions, as those discussed in Section 7, the
Krall type construction of orthogonal polynomials can be extended to several variables, as shown in \cite{DFPPX}.
The same holds true if the point evaluations involve derivatives, see \cite{DPP} for an example.

\section{Orthogonal expansions in Sobolev orthogonal polynomials}
\setcounter{equation}{0}

Let $\{p_n\}_{n \ge 0}$ be a system of orthogonal polynomials with respect to an inner product $\la \cdot,\cdot \ra$.
The Fourier orthogonal expansion of a function $f$ in $\{p_n\}_{n \ge 0}$ is defined by
$$
   f(x) \sim \sum_{n=0}^\infty \wh f_n p_n(x), \qquad  \wh f_n = \frac{1}{\sqrt{\la p_n,p_n\ra}} \la f, p_n\ra.
$$
The $n$-th partial sum $S_n f$ if defined by
$$
   S_n f(x):= \sum_{k=0}^n \wh f_k p_k(x) = \la f, K_n(x,\cdot)\ra,
$$
where $K_n(\cdot,\cdot)$ is the reproducing kernel of the space of polynomials of degree at most $n$, defined by
$$
   K_n(x,y) = \sum_{k=0}^n  \frac{1}{{\la p_k,p_k\ra}} p_k(x) p_k(y).
$$

The Fourier expansions in terms of classical orthogonal polynomials have been studied extensively in the literature.
One of the essential tools is the Christoffel-Darboux formula, which gives a closed formula for the kernel
$K_n(\cdot,\cdot)$. For Sobolev orthogonal polynomials, however, the Christoffel-Darboux formula no longer holds,
which impedes the study of the Fourier expansion in Sobolev orthogonal polynomials by using the standard techniques.
Indeed, except one class of Sobolev orthogonal polynomials, little has been done in this direction.

The exceptional class is tied with applications in the spectral methods for solving differential equations. Approximations
using orthogonal expansions in Sobolev spaces have been extensively studied by spectral methods community;
see, for example, \cite{CaQu} and numerous books (for example, \cite{Boyd, CHQZ, HGG}) on the subject,
although Sobolev orthogonal polynomials are not often used or used only implicitly. As an example of the exceptional
class, consider the inner product
$$
  \la f, g\ra_{-s} : = \int_{-1}^1 f^{(s)} (x) g^{(s)}(x)  dx +
    \sum_{j=0}^{s-1} \lambda_j [f^{(j)}(1) + f^{(-1)} (-1)].
$$
The orthogonal polynomials with respect to this inner product have been used implicitly in the spectral method;
see, for example, \cite{GW04, GSW09, SWL}. The error estimate for the spectral method requires estimating
the error of polynomial approximation in the Sobolev space $W_2^s[-1,1]$. By subtracting a polynomial
$g \in \Pi_{2s-1}$ that satisfies $g^{(j)}(1) = g^{(j)}(-1) =0$ for $j =0,1,\ldots, m-1$, we can assume that $f$
vanishes on the boundary terms. Let $S_n^{-s} f$ and $S_n^0$ denote the $n$-th partial sum of the Fourier expansion
of $f$ in terms of the Sobolev orthogonal polynomials and the Legendre polynomials $P_n^{(0,0)}$, respectively. It is not
difficult to see that
$
   \partial^s S_n^{-s} f = S_{n-s}^0 \partial^s f,
$
which implies that $\partial^s (f - S_n^{-s}) = \partial^s f -S_{n-s}^0 ( \partial^s f)$. Thus, the estimate for the highest order
derivative in $W_2^s$ follows from the estimate of the derivative of $f$ in the usual $L^2$ norm. The estimate
that the spectral method requires is of the form
$$
    \|f - p_n \|_{W_2^s} \le c n^{- r + s} \|f\|_{W_2^r}, \qquad r \ge s.
$$
where $p_n$ is a polynomial of degree $n$. Under the name of simultaneous approximation, an inequality of this type
has also been established in approximation theory for $W_p^s$ with $1 \le p < \infty$; see, for example, \cite{KK}.
Such results can also be established for the inner product with the Jacobi weight function (see \cite{SWL} and
the references therein).

In the same spirit, spectral approximation on the unit ball was studied in \cite{LX13}, where the Sobolev
orthogonal polynomials associated with \eqref{eq:ipd-s} play an essential role. In contrast to one variable, we can
no longer construct a polynomial $g$ of low degree so that $f-g$ vanishes on the boundary and, as a result,
the analysis in \cite{LX13} is more involved.

Besides the above class, little has been established for Fourier expansions in other Soboleve orthogonal polynomials.
For the Legendre-Sobolev inner product
$$
  \la f,g\ra_\lambda = \int_{-1}^1 f(x) g(x) dx + \lambda \int_{-1}^1 f'(x) g'(x) dx, \qquad \lambda \ge 0,
$$
which is \eqref{eq:ipdLeg1} and the first Sobolev inner product ever studied, it was observed in
\cite{ISKN}, based on computational evidence, that the Fourier expansion in the Sobolev orthogonal
polynomials behaviors remarkably better than the Fourier expansion in the Legendre toward the
end point of the interval. Similar phenomenon was also observed in later papers on Sobolev orthogonal
polynomials associated with measures that are coherent pairs, see \cite{JMPP14} and the references therein.
It should be mentioned that computation of Sobolev orthogonal polynomials is discussed in \cite{G, GZ}, where
several algorithms are provided.
From a heuristic point of view, Sobolev orthogonal expansions should have a better approximation
behavior than ordinary orthogonal expansions, as the former requires more information on the function being
expanded. However, at the time of this writing, there has been no theoretical result that either prove or quantify
that this is indeed the case.

For approximation in Sobolev spaces, it is to be expected that one should use Sobolev orthogonality instead of
ordinary orthogonality. Given the amount of works that have been carried out over years, it is surprising how
little has been done on the Fourier expansions in Sobolev orthogonal polynomials. As discussed in Section 2,
the initial motivation for studying Sobolev orthogonal polynomials came from the problem of least square
approximation in Sobolev spaces. It is time to go back to the beginning and study the Fourier expansions
and approximation by polynomials in Sobolev spaces. We have gained substantial knowledges on the Sobolev
orthogonality, it is now time to find connections and apply what we have learnt to solve problems in other fields.
We end this survey with this call of action.

\medskip
\noindent
{\bf Acknowledgment.} 
The project was carried out when the second author was on sabbatical from the University of Oregon
and visited the Carlos III University of Madrid under its generous Excellence Chair Program. He thanks the 
first author and the Department of Mathematics in the Carlos III University for the hospitality that he received.

 \end{document}